\DeclareRobustCommand\widecheck[1]{{\mathpalette\@widecheck{#1}}}
\def\@widecheck#1#2{%
   \setbox\z@\hbox{\m@th$#1#2$}%
   \setbox\tw@\hbox{\m@th$#1%
      \widehat{%
         \vrule\@width\z@\@height\ht\z@
         \vrule\@height\z@\@width\wd\z@}$}%
   \dp\tw@-\ht\z@
   \@tempdima\ht\z@ \advance\@tempdima2\ht\tw@ \divide\@tempdima\thr@@
   \setbox\tw@\hbox{%
      \raise\@tempdima\hbox{\scalebox{1}[-1]{\lower\@tempdima\box\tw@}}}%
   {\ooalign{\box\tw@ \cr \box\z@}}}
\newtheorem{theorem}{Theorem} [section]
\newtheorem{lemma}[theorem]{Lemma}
\newtheorem{proposition}[theorem]{Proposition}
\newtheorem{remark}[theorem]{Remark}
\newtheorem{corollary}[theorem]{Corollary}
\begin{document}

\title[Bounds on Sobolev norms for NLS on $S^1$]{Bounds on the growth of high Sobolev norms of solutions to Nonlinear Schr\"{o}dinger Equations on $S^1$}
\author{Vedran Sohinger}
\address{Massachusetts Institute of Technology, Department of Mathematics}
\email{\tt vedran@math.mit.edu}

\begin{abstract}
We consider Nonlinear Schr\"{o}dinger type equations on $S^1$. In this paper, we obtain polynomial bounds on the growth in time of high Sobolev norms of their solutions. The key is to derive an iteration bound based on a frequency decomposition of the solution, which is different than the iteration bound first used by Bourgain in \cite{B2}. We first look at the NLS equation with nonlinearity of degree $\geq 5$. For $q=5$, Bourgain in \cite{B4} derives stronger bounds using different techniques. However, our approach works for higher nonlinearities, where the techniques from \cite{B4} don't seem to apply.
Furthermore, we study non-integrable modifications of the cubic NLS, among which is the Hartree Equation, with sufficiently regular convolution potential. For most of the equations obtained this way, we obtain better bounds than for the other equations, due to the fact that we can use \emph{higher modified energies}, as in work of the I-Team \cite{CKSTT5,CKSTT3}.
\end{abstract}

\subjclass[2010]{35Q55}
\keywords{Nonlinear Schr\"{o}dinger Equation, Hartree Equation, Growth of high Sobolev norms}

\maketitle

\section{Introduction.}

\subsection{Setup of the Problem.}

Given $k \in \mathbb{N}$ and $s \in \mathbb{R}$ with $s \geq 1$, let us consider the 1D defocusing periodic nonlinear Schr\"{o}dinger initial value problem:

\begin{equation}
\label{eq:NLS}
\begin{cases}
iu_t + \Delta u=|u|^{2k}u, x\in S^1, t \in \mathbb{R}\\
u(x,0)=\Phi(x) \in H^s(S^1).
\end{cases}
\end{equation}

The nonlinear Schr\"{o}dinger equation arises naturally in geometric optics and in Bose-Einstein Condensates \cite{Sch,SuSu}. In the latter context, the equation is obtained in the appropriate scaling limit as $N \rightarrow \infty$ of $N-body$ Bose systems. Heuristically, the power of the nonlinearity comes from how many particles can interact at once.
\vspace{3mm}

If we start with initial data $\Phi \in H^1(S^1)$, we obtain a global solution $u$ to (\ref{eq:NLS}) for which the following
quantities are conserved:

\begin{equation}
\label{eq:massconservation}
M(u(t)):=\int_{S^1} |u(x,t)|^2 dx\,\,(\mbox{\emph{Mass}}).
\end{equation}

\begin{equation}
\label{eq:energyconservation}
E(u(t)):=\frac{1}{2} \int_{S^1}|\nabla u(x,t)|^2 dx + \frac{1}{2k+2}\int_{S^1}|u(x,t)|^{2k+2}\,\,(\mbox{\emph{Energy}}).
\end{equation}

Here we are using that the problem is \emph{globally well-posed in $H^1$}
\cite{B,B3}. Hence energy and mass conservation imply:

\begin{equation}
\label{eq:H1uniformbound}
\|u(t)\|_{H^1}\leq C(\Phi), \forall t \in \mathbb{R}.
\end{equation}
Furthermore, $\|u(t)\|_{H^1}$ can be bounded by a continuous function of energy and mass.

\vspace{3mm}

We are interested in the problem of bounding $\|u(t)\|_{H^s}$ for $s>1$. In this case, we can in general no longer get a priori bounds
coming from conservation laws. This problem has physical significance since it quantifies the \emph{Low-to-High frequency cascade}, i.e. how much of the support of $|\widehat{u}|^2$ has shifted from the low to the high frequencies. Namely, we observe that the $H^s$ norms weigh the higher frequencies more, especially as $s$ becomes large. Hence, the growth of high Sobolev norms, gives us a quantitative estimate on the \emph{Low-to-High frequency cascade} \footnote{We observe that, from conservation of energy, not all of the support of $|\widehat{u}|^2$ can move to the high frequencies. If a low-to-high frequency cascade occurs, then a part of $|\widehat{u}|^2$ must concentrate near the low frequencies, to counterbalance a movement of $|\widehat{u}|^2$ towards the high frequencies. The growth of high Sobolev norms quantitatively describes the latter part of the process.}.  The phenomenon of such a cascade in a dispersive wave model was first studied in the 1960s, for instance in \cite{BN,Has,Zak68}.

\vspace{2mm}

As was noted in \cite{FadTak,ZM}, the equation $(\ref{eq:NLS})$ is completely integrable when $k=1$. Hence, if we start from smooth initial data, all the Sobolev norms of a solution will be uniformly bounded in time.
We consider several modifications of the cubic NLS in which we break the complete integrability.
The first modification we consider is the Hartree equation on $S^1$:

\begin{equation}
\label{eq:Hartree}
\begin{cases}
iu_t + \Delta u=(V*|u|^2)u,\,\,x\in S^1, t \in \mathbb{R}\\
u(x,0)=\Phi(x) \in H^s(S^1).
\end{cases}
\end{equation}

The assumptions that we have on $V$ are:

\begin{enumerate}
\item[(i)] $V \in L^1(S^1)$
\item[(ii)] $V \geq 0$
\item[(iii)] $V$ is even.
\end{enumerate}

The Hartree Equation arises naturally in the dynamics of large quantum systems. It occurs in the context of the Mean-Field limit of $N$-body dynamics when we take $V$ to be the interaction potential \cite{Sch}.
Under the latter assumptions on $V$, we will see in Section 4.1. that the Hartree equation also has global solutions with a priori control on the $H^1$ norm, so we can consider the same question as we did before.

\vspace{3mm}

The analogous setup holds for the following two modifications of the cubic NLS, namely for the modification:

\begin{equation}
\label{eq:potentialcubicnls}
\begin{cases}
iu_t + \Delta u=|u|^2u + \lambda u,\,\,x\in S^1, t \in \mathbb{R}\\
u(x,0)=\Phi(x) \in H^s(S^1).
\end{cases}
\end{equation}

\vspace{3mm}

Here, we are assuming:

\begin{enumerate}
\item[(i)] $\lambda \in C^{\infty}(S^1)$
\item[(ii)] $\lambda$ is real-valued.
\end{enumerate}

\vspace{3mm}

We also consider the modification:

\begin{equation}
\label{eq:inhomogeneouscubicnls}
\begin{cases}
iu_t + \Delta u=\lambda |u|^2u,\,\,x\in S^1, t \in \mathbb{R}\\
u(x,0)=\Phi(x) \in H^s(S^1).
\end{cases}
\end{equation}

\vspace{2mm}

Here, the inhomogeneity $\lambda=\lambda(x)$ satisfies:

\begin{enumerate}
\item[(i)] $\lambda \in C^{\infty}(S^1)$
\item[(ii)] $\lambda \geq 0$.
\end{enumerate}

\subsection{Statement of the Main Results.}

Given a real number $r$, we denote by $r+$ the number $r+\epsilon$, where we take $0<\epsilon \ll 1$.
The number $r-$ is defined analogously as $r-\epsilon$.
With this notation, the results that we prove are:

\begin{theorem}
\label{Theorem 1}
Let $k\geq 2$ be an integer and let $s\geq 1$ be a real number.  Let $u$ be a global solution to
(\ref{eq:NLS}). Then, there exists a continuous function $C$, depending on $(s,k,E(\Phi),M(\Phi))$ such that, for all $t \in \mathbb{R}:$

\begin{equation}
\label{eq:nlsbound2}
\|u(t)\|_{H^s}\leq
C(s,k,E(\Phi),M(\Phi))(1+|t|)^{2s+}\|\Phi\|_{H^s}.
\end{equation}

\end{theorem}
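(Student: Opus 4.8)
The plan is to adapt Bourgain's high-low frequency decomposition method, but with the improved iteration bound advertised in the abstract. First I would introduce a large frequency threshold $N$ and decompose the solution as $u = u_{\text{lo}} + u_{\text{hi}}$, where $u_{\text{lo}}$ is the projection onto frequencies $\lesssim N$ and $u_{\text{hi}}$ onto frequencies $\gtrsim N$. Because we already have the uniform bound $\|u(t)\|_{H^1}\le C(\Phi)$ from \eqref{eq:H1uniformbound}, controlling $\|u(t)\|_{H^s}$ is essentially equivalent to controlling how much $L^2$ mass of the high-frequency part gets weighted by $N^s$; quantitatively, one reduces to estimating the growth of a mollified/truncated energy-type quantity $\mathcal{E}(u(t))$ comparable to $N^{2(s-1)}\|u(t)\|_{H^1}^2 + \|u(t)\|_{H^s}^2$ restricted suitably, or more simply one works directly with $\|Iu(t)\|_{H^s}$ for an appropriate smoothing multiplier $I$ that is the identity below frequency $N$ and damps like $(N/|\xi|)^{s-1}$ above.

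The core of the argument is a \emph{local-in-time iteration estimate}: on a time interval of fixed length $\delta$ (depending only on the conserved quantities, via local well-posedness in $H^1$), I would show that the relevant high-Sobolev quantity increases by at most a factor controlled by a negative power of $N$, schematically
\begin{equation}
\label{eq:iteration}
\|u(t_0+\delta)\|_{H^s}^2 \;\le\; \|u(t_0)\|_{H^s}^2 \;+\; C\,N^{-\beta}\,\bigl(1 + \|u(t_0)\|_{H^s}^2\bigr)
\end{equation}
for some $\beta = \beta(k) > 0$, using the energy method: differentiate $\|u(t)\|_{H^s}^2$ in time, integrate the resulting multilinear expression in the nonlinearity $|u|^{2k}u$, and exploit that a genuine high-frequency output of the nonlinearity forces either several high-frequency inputs (yielding many powers of $N^{-1}$ after using $H^1$ control) or a resonance cancellation. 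The periodic multilinear estimates needed here are the $X^{s,b}$-type or direct Fourier-counting bounds available on $S^1$; since the nonlinearity has degree $2k+1 \ge 5$, there is enough "room" to absorb the derivatives, which is precisely why the method tolerates higher $k$ (unlike the $q=5$ techniques of \cite{B4}). Iterating \eqref{eq:iteration} roughly $|t|/\delta$ times gives $\|u(t)\|_{H^s}^2 \lesssim \|\Phi\|_{H^s}^2 + N^{-\beta}|t|\,\sup_{[0,t]}\|u\|_{H^s}^2$, and then optimizing the choice of $N$ as a power of $|t|$ (balancing the $N^s$ loss incurred when passing from the truncated quantity back to the full $H^s$ norm against the $N^{-\beta}|t|$ growth) yields the polynomial bound $(1+|t|)^{2s+}$, with the $\epsilon$-loss coming from logarithmic/endpoint factors in the periodic multilinear estimates.

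The main obstacle I anticipate is proving the local iteration bound \eqref{eq:iteration} with a quantified gain $N^{-\beta}$ — that is, showing the nonlinear interaction cannot efficiently transfer $L^2$ mass to frequencies above $N$ over a unit time interval. Concretely this requires a careful case analysis of the frequency interactions in the $(2k+1)$-linear form: the dangerous case is when one input is at frequency $\sim N$ and the others are low, so one must use the dispersive/modulation structure (the oscillation $e^{it(\pm|\xi_1|^2 \pm \cdots)}$ and the fact that the resonant set is small on $\mathbb{Z}$) to extract a gain, rather than relying only on the size of the high-frequency inputs. A secondary technical point is making the time-slicing uniform: the local well-posedness time $\delta$ must depend only on $E(\Phi), M(\Phi)$ (not on $\|\Phi\|_{H^s}$), which is fine because we only need $H^1$ local theory for the persistence of regularity, but one must be careful that the $H^s$ norm, while possibly large, does not blow up within a single step — this again follows from \eqref{eq:iteration} itself, read as an a priori bound, closed by a continuity argument.
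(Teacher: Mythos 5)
Your overall architecture matches the paper's: introduce a mollified Fourier multiplier with threshold $N$, prove a per-step iteration bound with a decay factor $N^{-\beta}$, iterate over time $\sim|t|$, and then optimize $N$ as a power of $|t|$, converting back to $\|u\|_{H^s}$ at the cost of $N^s$. Two issues, however, one a concrete error and one a substantive gap.

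First, the multiplier you propose points the wrong way. You describe ``an appropriate smoothing multiplier $I$ that is the identity below frequency $N$ and damps like $(N/|\xi|)^{s-1}$ above,'' i.e.\ the standard $I$-operator from the $I$-method. With that choice, $\|Iu\|_{H^s}^2$ weights high frequencies by $N^{2s-2}|n|^2$, so $\|Iu(t)\|_{H^s}\lesssim N^{s-1}\|u(t)\|_{H^1}\lesssim N^{s-1}$ for all $t$ by conservation of energy alone -- the quantity is trivially bounded and carries no information about $\|u\|_{H^s}$, and in fact one has $\|Iu\|_{H^s}\not\gtrsim\|u\|_{H^s}$. What is needed is the \emph{upside-down} $I$-operator of the paper: $\mathcal{D}$ has symbol $\theta(n)=1$ for $|n|\le N$ and $\theta(n)=(|n|/N)^s$ for $|n|>N$ (growing, not decaying), so that $\|u\|_{H^s}\lesssim N^s\|\mathcal{D}u\|_{L^2}$ while $\|\mathcal{D}u\|_{L^2}\lesssim\|u\|_{H^s}$. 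Then showing $\|\mathcal{D}u\|_{L^2}$ stays comparable to its initial value on a time interval of length $N^{1/2-}$ produces $\|u(T)\|_{H^s}\lesssim N^s\|\Phi\|_{H^s}\lesssim T^{2s+}\|\Phi\|_{H^s}$. Your first proposed quantity $\mathcal{E}\sim N^{2(s-1)}\|u\|_{H^1}^2+\|u\|_{H^s}^2$ is of roughly the right size (comparable to $N^{2s}\|\mathcal{D}u\|_{L^2}^2$), but as written it is not generated by a smooth multiplier, and the smoothness of $\theta$ is exactly what makes the cancellations in $\frac{d}{dt}\|\mathcal{D}u\|_{L^2}^2$ available.

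Second, the heart of the proof -- the iteration bound with the specific gain $\beta=\tfrac12-$, uniformly in $k$ -- is asserted but not established, and you leave $\beta$ as an unspecified $\beta(k)>0$. The exponent $2s$ in the theorem comes precisely from $\beta=\tfrac12$ via $N\sim T^{2+}$, so without quantifying $\beta$ one does not get $(1+|t|)^{2s+}$. The paper obtains $\tfrac12$ by symmetrizing $\frac{d}{dt}\|\mathcal{D}u\|_{L^2}^2$ into a $(2k+2)$-linear form with multiplier $M=\sum_j(-1)^{j+1}\theta(n_j)^2$, dyadically localizing, and splitting according to whether the two largest frequencies enter $M$ with opposite or equal signs. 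In the opposite-sign case a mean-value estimate yields $|M|\lesssim N_1^{-1/2}\theta(N_1)\theta(N_2)$ once the remaining frequencies are $\ll N_1^{1/2}$; in the equal-sign case there is no multiplier cancellation and one instead localizes in modulation $\langle\tau-n^2\rangle\sim L_j$ and uses $L_1L_2\gtrsim N_1^2$ to recover $N_1^{-1/2+}$. Your remark that ``the resonant set is small on $\mathbb{Z}$'' gestures in this direction but neither mechanism nor exponent is derived. Finally, the uniformity of $\beta$ in $k$ -- which is why the theorem's exponent does not degrade with $k$ -- is due to estimating the extra $k-2$ factors in $L^\infty_{t,x}$ via $X^{\frac12+,\frac12+}\hookrightarrow L^\infty_{t,x}$ with only a $0+$ dyadic loss; this is a point worth making explicit rather than subsuming under ``enough room to absorb the derivatives.''
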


For the modifications of the cubic NLS, we can prove the following results:

\begin{theorem}
\label{Theorem 2}

Let $s\geq 1$ and let $u$ be a global solution of $(\ref{eq:Hartree})$. Then, there exists a function $C$ as above, such that for all $t \in \mathbb{R}:$

\begin{equation}
\label{eq:hartreebound}
\|u(t)\|_{H^s}\leq
C(1+|t|)^{\frac{1}{2}s+}\|\Phi\|_{H^s}.
\end{equation}

\end{theorem}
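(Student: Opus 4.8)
The plan is to run the \emph{upside-down $I$-method} --- the almost-conservation-law machinery of the $I$-team \cite{CKSTT5,CKSTT3}, but with the smoothing multiplier replaced by a growing one --- and to push the modified-energy iteration one step beyond what is available for $(\ref{eq:NLS})$. Fix a large parameter $N$ (to be optimized at the end against $|t|$) and let $D=D_N$ be the Fourier multiplier with symbol $\theta(n)$, where $\theta(n)=1$ for $|n|\le N$ and $\theta(n)=(|n|/N)^{s-1}$ for $|n|>N$, so $D$ behaves like a normalized $|\nabla|^{s-1}$ at high frequencies. Since $\theta\ge1$ is nondecreasing in $|n|$ one has
\begin{equation*}
\|D\Phi\|_{\dot H^1}^2\lesssim\|\Phi\|_{H^1}^2+N^{-2(s-1)}\|\Phi\|_{H^s}^2,
\qquad
\|u\|_{H^s}^2\lesssim N^{2(s-1)}\bigl(\|u\|_{H^1}^2+\|Du\|_{\dot H^1}^2\bigr),
\end{equation*}
so, using conservation of mass and of the Hartree energy from Section 4.1 (which bounds $\|u(t)\|_{H^1}$ by $C(E(\Phi),M(\Phi))$), Theorem \ref{Theorem 2} reduces to controlling the growth of $\|Du(t)\|_{\dot H^1}$.

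Following the $I$-team, set $E^1(u):=E(Du)=\tfrac12\|\nabla Du\|_{L^2}^2+\tfrac14\int_{S^1}(V*|Du|^2)|Du|^2$, the conserved energy evaluated at $Du$. Since $Du$ does not solve $(\ref{eq:Hartree})$ --- the discrepancy being a commutator of $D$ with the convolution nonlinearity, supported at high frequencies --- one gets $\frac{d}{dt}E^1(u)=\Lambda_4(M_1;u)+R_6(u)$: a quadrilinear form on $\{n_1-n_2+n_3-n_4=0\}$ whose symbol $M_1$ is built from $\widehat V$ and from differences of the $\theta(n_j)^2$ and $n_j^2\theta(n_j)^2$, plus a sextilinear remainder $R_6$. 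Two points are crucial. First, $M_1$ vanishes when all $|n_j|\le N$ (there $Du$ solves the equation and $E$ is conserved) and, in symmetrized form, on the trivial-resonance set $\{(n_1-n_2)(n_2-n_3)=0\}$ (those contributions are real, hence killed by the imaginary part that appears). Second, off that set the resonance function $\Omega:=n_1^2-n_2^2+n_3^2-n_4^2=2(n_1-n_2)(n_2-n_3)$ is a nonzero integer on $S^1$, so $|\Omega|\ge2$. Hence one may define
\begin{equation*}
E^2(u):=E^1(u)+\Lambda_4\!\left(\tfrac{M_1}{\Omega};u\right),
\end{equation*}
the extra term chosen so that the quadrilinear part of $\frac{d}{dt}E^2$ cancels; substituting $(\ref{eq:Hartree})$ into $\frac{d}{dt}\Lambda_4(M_1/\Omega;u)$ leaves $\frac{d}{dt}E^2(u)=\Lambda_6(M_3;u)+R_6(u)$, a purely sextilinear expression. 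One also checks $|E^2(u)-E^1(u)|\ll\|\nabla Du\|_{L^2}^2$ (from the smallness of $M_1/\Omega$ and a periodic $L^4_{x,t}$ estimate), so that $E^2(u)$ is comparable to $\|\nabla Du\|_{L^2}^2$ up to an energy-and-mass-dependent constant.

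The crux is the sextilinear estimate: on a unit interval $[t_0,t_0+1]$,
\begin{equation*}
\Bigl|\int_{t_0}^{t_0+1}\!\bigl(\Lambda_6(M_3;u)+R_6(u)\bigr)\,dt\Bigr|\lesssim N^{-\beta}\bigl(1+\|Du\|_{X^{1,\frac12+}([t_0,t_0+1])}\bigr)^{C},
\end{equation*}
for some $\beta=\beta(s)>0$ \emph{strictly larger than the gain $E^1$ alone yields for $(\ref{eq:NLS})$}: one distributes the weights $\theta(n_j)$ and the frequency gaps among the six factors, applies Bourgain's periodic $L^6$ Strichartz estimate and its bilinear $L^4$ refinement, and uses only $\|\widehat V\|_{\ell^\infty}<\infty$ (valid since $V\in L^1$). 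The convolution structure together with the absence of a top-order derivative in the Hartree nonlinearity is exactly what makes this $N$-gain better than for $|u|^{2k}u$, $k\ge2$, where the extra factors of $u$ obstruct the second iteration. Combining this with the local theory (to control $\|Du\|_{X^{1,\frac12+}}$ on $[t_0,t_0+1]$ by $\|Du(t_0)\|_{H^1}$), a continuity/bootstrap argument per unit interval, and a sum over $\lceil|t|\rceil$ intervals gives $E^2(u(t))\lesssim E^2(u(0))+|t|\,N^{-\beta}$, hence
\begin{equation*}
\|u(t)\|_{H^s}^2\lesssim N^{2(s-1)}\bigl(1+\|\nabla Du(t)\|_{L^2}^2\bigr)\lesssim N^{2(s-1)}+\|\Phi\|_{H^s}^2+|t|\,N^{2(s-1)-\beta};
\end{equation*}
choosing $N$ a suitable power of $1+|t|$ (and a short argument exploiting the six-linearity to restore the multiplicative factor $\|\Phi\|_{H^s}$) then yields $(\ref{eq:hartreebound})$.

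I expect the main obstacle to be producing $E^2$ with both properties it must have at once: genuine comparability with $E^1$, and a time derivative decaying like a good negative power of $N$. This rests on (a) the lower bound $|\Omega|\ge2$ off the trivial resonances --- a number-theoretic input special to $S^1$, and the reason the circle is more delicate here than the line --- and (b) the sextilinear spacetime estimate above, in which the $\sim 2(s-1)$ derivatives carried by $M_3$ must be balanced against the six factors so that a genuine power $\beta=\beta(s)$ of $N$ is saved; it is the size of $\beta$ that fixes the exponent $\tfrac12 s+$. Everything else --- the comparisons for $D$, mass and energy conservation, the local theory, the bootstrap, and the final optimization in $N$ --- is routine once (a) and (b) are in hand.
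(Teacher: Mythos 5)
Your proposal follows the classical I-method template of \cite{CKSTT3,CKSTT5}: you take the conserved Hartree Hamiltonian evaluated at the multiplied solution, $E^1(u)=E(Du)$, with a multiplier normalized to $\dot H^1$ (symbol $(|n|/N)^{s-1}$ at high frequency), compute $\tfrac{d}{dt}E^1=\Lambda_4(M_1;u)+R_6(u)$, and cancel the quadrilinear part by a correction $\Lambda_4(M_1/\Omega;u)$. The paper instead uses the \emph{quadratic} modified energy $E^1(u)=\|\mathcal{D}u\|_{L^2}^2$, the ``upside-down $I$-operator'' of \cite{CKSTT2}, with symbol $(|n|/N)^{s}$ normalized to $L^2$, and then constructs $E^2=E^1+\lambda_4(\Psi;u)$. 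Both are legitimate routes and lead to the same $N^{-2+}$ relative decay of the modified energy and thus the same exponent $\tfrac12 s+$, but the paper's choice is appreciably cleaner for exactly the reasons your own ``main obstacle'' paragraph flags: because $\|\mathcal{D}u\|_{L^2}^2$ is quadratic, its time derivative is purely quadrilinear (no sextilinear remainder $R_6$ to carry along); the comparability $E^2\sim E^1\sim\|\mathcal{D}u\|_{L^2}^2$ reduces to a direct pointwise bound on $\Psi$ (Lemma 4.2/4.3), with no quartic potential term to bootstrap against; and the numerator of $\Psi$ is a pure alternating difference of $\theta(n_j)^2$, not the mixture of $\theta(n_j)^2$- and $n_j^2\theta(n_j)^2$-differences that enters when you differentiate $E(Du)$. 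Your version can be pushed through (the classical I-method papers handle precisely these complications), but it is longer, and the final assembly needs care since the two factors of $N^2$ hiding in your conventions must cancel coherently. One concrete warning: you invoke ``Bourgain's periodic $L^6$ Strichartz estimate and its bilinear $L^4$ refinement.'' On $S^1$, unlike $\mathbb{R}$, no useful bilinear refinement of the $L^4$ Strichartz estimate is available — this is one of the central obstructions the paper discusses in Section~1.4 — and none is needed: once the pointwise bound on $M_1/\Omega$ of the shape $O\bigl((N_1^*)^{-2}\theta(N_1^*)\theta(N_2^*)N_3^*N_4^*\bigr)$ is in hand, the $L^4\times L^4\times L^4\times L^4\times L^\infty\times L^\infty$ H\"older argument (as in the paper's Lemma~\ref{Bigstar_Hartree}) together with the plain periodic $L^4$ Strichartz estimate (\ref{eq:strichartztorus}) and Sobolev embedding closes the sextilinear estimate, so you should replace the appeal to a bilinear refinement by that argument.
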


\begin{theorem}
\label{Theorem 3}

Let $s\geq 1$ and let $u$ be a global solution of $(\ref{eq:potentialcubicnls})$. Then, there exists a function $C$ as above, such that for all $t \in \mathbb{R}:$

\begin{equation}
\label{eq:potentialcubicnlsbound}
\|u(t)\|_{H^s}\leq
C(1+|t|)^{s+}\|\Phi\|_{H^s}.
\end{equation}

\end{theorem}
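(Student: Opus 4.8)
The plan is to treat $\lambda u$ as a perturbation of the (integrable) cubic flow, build a modified energy comparable to $\|u(t)\|_{H^s}^2$ whose time derivative gains a fixed fraction of a derivative, and close a differential inequality. Write $D^s$ for the Fourier multiplier $n\mapsto\langle n\rangle^s$; I will use the $H^1$ theory and the modified energies of \cite{CKSTT5,CKSTT3} freely.

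First I would record global well-posedness and a uniform $H^1$ bound for $(\ref{eq:potentialcubicnls})$: local well-posedness in $H^1$ is standard since $\lambda\in C^\infty(S^1)$ is bounded, the conserved Hamiltonian is $E(u)=\tfrac12\int_{S^1}|\nabla u|^2+\tfrac14\int_{S^1}|u|^4+\tfrac12\int_{S^1}\lambda|u|^2$, and from $\bigl|\int_{S^1}\lambda|u|^2\bigr|\le\|\lambda\|_{L^\infty}M(u)$ together with conservation of $E$ and $M$ we get $\|u(t)\|_{H^1}\le C(E(\Phi),M(\Phi))$ for all $t$, just as in Section 4.1 for the Hartree equation. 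By a standard approximation argument it suffices to prove the bound for smooth $\Phi$, with $C$ depending only on $s,E(\Phi),M(\Phi)$ (and $\lambda$).

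Now the modified energy. Put $E^1(u)=\|u\|_{H^s}^2$ and differentiate along $(\ref{eq:potentialcubicnls})$. The dispersive part of the flow contributes nothing; the contribution of $|u|^2u$ is a $4$-linear form supported on $n_1-n_2+n_3-n_4=0$ whose multiplier is comparable to $|n_1|^{2s}-|n_2|^{2s}+|n_3|^{2s}-|n_4|^{2s}$. In one dimension the relations $n_1-n_2+n_3-n_4=0$ and $|n_1|^2-|n_2|^2+|n_3|^2-|n_4|^2=0$ force $\{n_1,n_3\}=\{n_2,n_4\}$, on which this multiplier vanishes, so the entire cubic contribution to $\tfrac{d}{dt}E^1$ is non-resonant; following \cite{CKSTT5,CKSTT3} I would cancel it by setting $E^2(u)=E^1(u)+\Lambda_4(\sigma_4;u)$ with $\sigma_4$ essentially the above multiplier divided by the resonance function $|n_1|^2-|n_2|^2+|n_3|^2-|n_4|^2$ (and iterating once more to a $6$-linear correction $\Lambda_6$ if one step does not suffice). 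Two facts must then be checked: (a) $|E^2(u)-E^1(u)|=|\Lambda_4(\sigma_4;u)|\lesssim\|u\|_{H^s}^{2-\kappa}$ for some $\kappa>0$, so that $E^2\sim\|u\|_{H^s}^2$, which follows from the gain in $\sigma_4$ coming from the resonance denominator together with the uniform $H^1$ bound on the two lowest-frequency factors; and (b) every term surviving in $\tfrac{d}{dt}E^2$ --- the $6$-linear terms created when $u_t$ hits the nonlinearity inside $\Lambda_4$, and the terms generated by $\lambda u$ --- is $\lesssim\|u\|_{H^s}^{2-2\theta}$ for a fixed $\theta>0$. The $\lambda$ term is transparent: its contribution to $\tfrac{d}{dt}E^1$ is, up to sign, $2\,\mathrm{Im}\,\langle D^su,[D^s,\lambda]u\rangle_{L^2}$, since the diagonal piece $\langle D^su,\lambda D^su\rangle=\int_{S^1}\lambda|D^su|^2$ is real; because $\lambda$ is smooth, $[D^s,\lambda]$ has order $s-1$, so this is $\lesssim\|u\|_{H^s}\|u\|_{H^{s-1}}$, and interpolating $\|u\|_{H^{s-1}}\lesssim\|u\|_{H^s}^{1-1/s}\|u\|_{L^2}^{1/s}$ and using conservation of mass gives $\lesssim_{M(\Phi)}\|u\|_{H^s}^{2-1/s}$, i.e.\ $\theta=\tfrac1{2s}$. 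I expect the $6$-linear terms to come with at least this much gain (with a harmless $\epsilon$-loss from the multilinear estimates on $S^1$), so that $\lambda u$ is the bottleneck.

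Combining (a) and (b) yields $\tfrac{d}{dt}E^2\lesssim(E^2)^{1-\frac1{2s}+}$; integrating this (or, if the multilinear estimates are only available on fixed-length time intervals, iterating over $\sim|t|$ such intervals) gives $E^2(u(t))\lesssim(1+|t|)^{2s+}$ for bounded data, hence $\|u(t)\|_{H^s}\lesssim(1+|t|)^{s+}$. The multiplicative dependence on $\|\Phi\|_{H^s}$ is then recovered by splitting into the cases $\|\Phi\|_{H^s}$ bounded --- where one passes to the stated form using $\|\Phi\|_{H^s}\ge M(\Phi)^{1/2}$ --- and $\|\Phi\|_{H^s}$ large, where $E^2(u(t))\le E^2(\Phi)(1+C|t|)^{2s+}$ follows directly from the differential inequality since $E^2(\Phi)^{1/(2s)}$ then dominates the additive constant. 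The main obstacle is step (b): one must show that, after the normal-form correction, the surviving $6$-linear (and, for the double-corrected energy, $8$-linear) terms genuinely gain derivatives, which requires exploiting the resonance function and distributing the $2s$ derivatives so that at most $2-2\theta$ of them fall on $H^s$-level factors, the rest being absorbed by the uniform $H^1$ bound, conservation of mass, and Sobolev / Gagliardo--Nirenberg estimates on $S^1$; verifying $|E^2-E^1|\ll\|u\|_{H^s}^2$ in the borderline frequency regimes is the second delicate point.
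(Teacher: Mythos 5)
Your route is genuinely different from the paper's, and the central new idea—treating the linear term as a commutator—is correct and recovers the right exponent. The paper does not differentiate $\|u\|_{H^s}^2$ directly: it works with the truncated multiplier $\theta(n)=\max(1,(|n|/N)^s)$ and $E^1(u)=\|\mathcal D u\|_{L^2}^2$, builds $E^2=E^1+\lambda_4(M_4;u)$ with $M_4=\Psi_2$, and proves the increment bound $|E^2(u(t_0+\delta))-E^2(u(t_0))|\lesssim N^{-1+}E^2(u(t_0))$ (Lemma \ref{E2ip}). There the hexalinear and the $\lambda$-quadrilinear terms decay like $N^{-2+}$ and the bottleneck is the \emph{bilinear} term $\sum_{n_1+n_2+n_3=0}((\theta(n_1))^2-(\theta(n_2))^2)\widehat u(n_1)\widehat{\bar u}(n_2)\widehat\lambda(n_3)$, which the paper estimates in $X^{s,b}$ after a dyadic case split exploiting the smoothness of $\lambda$ and the mean-value gain in $\theta^2$; then $N\sim T^{1+}$ gives the exponent $s+$. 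Your version replaces the $N$-threshold with the clean observation that the diagonal part $\langle D^su,\lambda D^su\rangle$ is real and the off-diagonal part is $\langle [D^s,\lambda]u,D^su\rangle\lesssim\|u\|_{H^{s-1}}\|u\|_{H^s}\lesssim\|u\|_{H^s}^{2-1/s}$, the last step by interpolation against the conserved $L^2$ norm, so that a Gronwall-type integration delivers $(1+|t|)^{s+}$ directly without optimizing over a truncation parameter. Both mechanisms ultimately exploit the same fact—that the commutator of the high-order multiplier with $\lambda$ has one order less—and give the same bound; the paper's version (i) is the exact analogue of the $\theta$ case analysis in Subsections 4.2.3, and (ii) is robust precisely because the explicit decay factor $N^{-1+}$ and the freedom to take $N$ large make the comparison $E^2\sim E^1$ and all the multilinear bounds uniform, regardless of the size of $\|u\|_{H^s}$. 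In your framework those two checks become the real work: for (a), the analogue of the pointwise bound of Lemma \ref{Lemma 4.2} gives, after Littlewood--Paley and an $L^2\times L^2\times L^\infty\times L^\infty$ H\"older, $|\Lambda_4(\sigma_4;u)|\lesssim\|u\|_{H^{s-1/2+}}^2\|u\|_{H^1}^2\lesssim\|u\|_{H^s}^{2-1/s+}\|u\|_{L^2}^{1/s}\|u\|_{H^1}^2$, which is small relative to $\|u\|_{H^s}^2$ only once $\|u\|_{H^s}$ is large (harmless, since the claim is trivial for bounded $\|u\|_{H^s}$, but it must be said); and for (b), the hexalinear terms are controlled by $X^{s,b}$ Strichartz estimates that are local in time, so the ``differential inequality'' must in practice be run as an increment bound over intervals of length $\delta=\delta(E,M)$, which you do acknowledge. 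In short: a valid and arguably more transparent strategy, at the cost of having to re-derive the multilinear estimates in intrinsic form rather than with the paper's more forgiving threshold device.
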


\begin{theorem}
\label{Theorem 4}

Let $s\geq 1$ and let $u$ be a global solution of $(\ref{eq:inhomogeneouscubicnls})$. Then, there exists a function $C$ as above, such that for all $t \in \mathbb{R}:$

\begin{equation}
\label{eq:inhomogeneouscubicnlsbound}
\|u(t)\|_{H^s}\leq
C(1+|t|)^{2s+}\|\Phi\|_{H^s}.
\end{equation}

\end{theorem}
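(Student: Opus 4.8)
The plan is to treat \eqref{eq:inhomogeneouscubicnls} as a perturbation of the cubic NLS and run the same frequency-decomposition iteration scheme that underlies the proof of Theorem~\ref{Theorem 1}, but adapted to the cubic setting. First I would establish global well-posedness and a uniform $H^1$ bound: multiplying the equation by $\bar u$ and integrating gives $\frac{d}{dt}\int |u|^2 = 0$ since $\lambda\geq 0$ is real, so mass is conserved; pairing with $\bar u_t$ yields a conserved energy $E(u) = \frac12\int|\nabla u|^2 + \frac14\int \lambda |u|^4$, which (using $\lambda\geq 0$) controls $\|u(t)\|_{H^1}$ uniformly in $t$ exactly as in \eqref{eq:H1uniformbound}. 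Global well-posedness in $H^1$ follows from the subcritical local theory plus this a priori bound, and local well-posedness in $H^s$ then propagates the solution.

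Next I would set up the upside-down $I$-method / frequency-truncation argument. Let $N \gg 1$ be a threshold, write $u = u_{lo} + u_{hi}$ according to frequencies $\lesssim N$ and $\gtrsim N$, and introduce the modified Sobolev-type quantity built from $\|I u\|_{H^1}^2$ or equivalently a smoothed version of $\|u\|_{H^s}^2$ where frequencies above $N$ are damped by a factor $(N/|\xi|)^{s-1}$ — the same functional used for Theorem~\ref{Theorem 1}. The core of the argument is the local-in-time iteration estimate: on a time interval of length $\sim 1$ (the local existence time coming from the $H^1$ bound), one shows
\begin{equation}
\label{eq:Th4iteration}
\Big| \int_{t_0}^{t_0+1} \frac{d}{dt}\, \|Iu\|_{H^1}^2 \, dt \Big| \lesssim N^{-\beta}\, \|Iu(t_0)\|_{H^1}^{c}
\end{equation}
for some $\beta>0$ and some power $c$, with the implied constant depending on the conserved $H^1$ norm. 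Iterating \eqref{eq:Th4iteration} over $\sim |t|$ intervals and optimizing $N$ as a function of $|t|$ converts the per-step gain $N^{-\beta}$ into the polynomial-in-time bound \eqref{eq:inhomogeneouscubicnlsbound}; the exponent $2s+$ matches what the cubic nonlinearity gives through this particular iteration bound, the same way the degree-$(2k{+}1)$ nonlinearity produces $2s+$ in Theorem~\ref{Theorem 1}.

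The estimate \eqref{eq:Th4iteration} itself is proved on the Fourier side. Differentiating the modified energy, the cubic term $\lambda|u|^2 u$ produces a multilinear expression $\sum_{\xi_1-\xi_2+\xi_3-\xi_4 = 0}\, m(\xi)\, \widehat\lambda(\ast)\,\widehat u(\xi_1)\overline{\widehat u(\xi_2)}\,\widehat u(\xi_3)\overline{\widehat u(\xi_4)}$ where the multiplier $m$ is a difference of the smoothing symbols; the cancellation making $m$ small when all frequencies are comparably large is what yields the gain $N^{-\beta}$. One then bounds the resulting sum using the periodic $L^4_{t,x}$ Strichartz estimate of Bourgain, i.e. $X^{s,b}$ bounds on $S^1$, controlling the high-frequency factors in $X^{1,b}$ via the conserved norm and absorbing the smooth inhomogeneity $\lambda$ harmlessly since $\widehat\lambda$ decays rapidly (so the extra frequency $\xi_1-\xi_2+\xi_3-\xi_4$ carried by $\lambda$ contributes an $O(1)$ factor after summation). \textbf{The main obstacle} is precisely this multilinear estimate in the presence of the variable coefficient $\lambda(x)$: one must confirm that $\lambda$ does not destroy the near-resonance structure that gives the decay in $N$, and that—unlike the Hartree case where a genuine extra conservation-law-type cancellation permits a \emph{higher} modified energy and hence the better exponent $\tfrac12 s+$ of Theorem~\ref{Theorem 2}—here the lack of such additional structure (the $\lambda$ breaks it) forces us to stop at the first modified energy, giving only $2s+$. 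Checking rigorously that the higher modified energy correction is unavailable, and that the first-generation bound is nonetheless clean, is the delicate point; the remaining steps are routine adaptations of the cubic NLS estimates on $S^1$.
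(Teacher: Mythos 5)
Your high-level strategy matches the paper's: use the operator $\mathcal{D}$ from Theorem~\ref{Theorem 1} and the quantity $E^1(u)=\|\mathcal{D}u\|_{L^2}^2$ (not a higher modified energy), derive an iteration bound with decay $N^{-\frac{1}{2}+}$, and iterate to get the $(1+|t|)^{2s+}$ bound. You also correctly identify why the higher modified energy $E^2$ is unavailable here, which the paper explains in terms of the failure of the factorization $(\ref{eq:denominator})$ once $\lambda$ is present (Remark~\ref{Remark 4.3}).

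However, there is a genuine gap in the step you flag as the ``delicate point,'' and the mechanism you sketch for it would not work as stated. Differentiating $E^1$, the cubic term $\lambda|u|^2u$ produces a \emph{five}-fold sum over $n_0+n_1+n_2+n_3+n_4=0$, where $n_0$ is the frequency of $\lambda$. You propose to ``absorb $\lambda$ harmlessly'' by summing over $n_0$ and using that $\widehat{\lambda}$ is summable, claiming this contributes an $O(1)$ factor. That discards the near-resonance structure: the cancellation in $(\theta(n_1))^2-(\theta(n_2))^2+(\theta(n_3))^2-(\theta(n_4))^2$ that produced the gain $N_1^{-\frac{1}{2}}$ in Lemma~\ref{Bigstar1} (Case 1 of Big Case 1) relied on $n_1-n_2+n_3-n_4=0$ forcing $|n_1|\approx|n_2|$ when the other frequencies are small. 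If $n_0$ is comparable to the top frequency $N_1^*$, the constraint is $n_1-n_2+n_3-n_4=-n_0$, the two largest frequencies need not nearly cancel, and the multiplier is merely $O((\theta(N_1^*))^2)$ with no decay. So a blind sum over $n_0$ loses the gain. The paper's Lemma~\ref{inhomogeneousE1increment} resolves this by splitting on the size of $N_0$: when $N_0\gtrsim(N_1^*)^{\epsilon}$, one uses the rapid decay $\|\mathcal{D}^M\mu\|\lesssim 1$ to extract $(N_1^*)^{-\epsilon M}$ and then takes $M$ large; when $N_0\ll(N_1^*)^{\epsilon}$ with $\epsilon<\frac{1}{2}$, the frequency perturbation by $n_0$ is small enough that the near-resonance estimates of Lemma~\ref{Bigstar1} carry over essentially verbatim (the needed condition becomes $N_0\ll N_1^{1/2}$). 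Your proposal needs this case split to turn the iteration bound $(\ref{eq:Th4iteration})$ into a theorem; without it, the central decay estimate is not established.
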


\vspace{3mm}

It makes sense to consider the case $k=1$ in Theorem \ref{Theorem 1}, as long as we are taking $s$ which is not an integer, and if we are assuming only $\Phi \in H^s(S^1)$. It turns out that we can get a better bound, which is the same as the one obtained for $(\ref{eq:Hartree})$. This result will be clear from the proof of Theorem \ref{Theorem 2}. This, however, doesn't allow us to recover the uniform bounds on the integral Sobolev norms of a solution, as we observed, up to a loss of $t^{0+}$ in the non-periodic case in \cite{SoSt1}. The question of bounding the growth of fractional Sobolev norms of solutions to the 1D periodic and non-periodic cubic NLS was posed on \cite{DW}.

\vspace{3mm}

Analogous results hold for focusing-type equations, except that then we need to consider initial data which is sufficiently small in an appropriate norm. As we will see in the proof, the only reason why we are looking at defocusing equations is that we have global existence in $H^1$, and the a priori bound on the $H^1$ norm, as is given by (\ref{eq:H1uniformbound}).

\vspace{3mm}

We can obtain the same conclusion for the defocusing variant of $(\ref{eq:NLS})$ if $\|\Phi\|_{H^1}$ is sufficiently small. On the other hand, in the case of the Hartree equation $(\ref{eq:Hartree})$, we can change the second assumption on $V$ to just assume that $V$ is real-valued, as long as we suppose that $\|\Phi\|_{L^2}$ is sufficiently small. For such initial data, the conclusion of Theorem \ref{Theorem 2} will still hold. Under an analogous $L^2$-smallness assumption on the initial data, we can consider $(\ref{eq:potentialcubicnls})$ with focusing nonlinearity, and $(\ref{eq:inhomogeneouscubicnls})$ with $\lambda$ which is assumed to be real-valued, but not necessarily non-negative. The conclusions of Theorem \ref{Theorem 3} and Theorem \ref{Theorem 4} will still hold then. We will henceforth consider only the defocusing-type equations.

\subsection{Previously known results.}

Suppose that $u$ is a solution of $(\ref{eq:NLS})$.
One can immediately obtain exponential bounds on the growth of Sobolev norms by iterating the local well-posedness scheme. The main reason is that the increment time coming from local well-posedness is determined by the conserved quantities of the equation. More precisely, one recalls from \cite{B,B3,Tao} that there exist $\delta,C>0$ depending only on the initial data such that for all times $t_0$:

\begin{equation}
\label{eq:exponentialiteration}
\|u(t_0+\delta)\|_{H^s} \leq C \|u(t_0)\|_{H^s}.
\end{equation}

We iterate (\ref{eq:exponentialiteration}) to obtain the exponential bound:

\begin{equation}
\label{eq:exponentialbound}
\|u(t)\|_{H^s}\lesssim_{s,\Phi} e^{A|t|}.
\end{equation}

It is, however, possible to obtain polynomial bounds. This was achieved for other nonlinear Schr\"{o}dinger equations
in \cite{B2,CatW,S,S2,Z}. These arguments can also be adapted to (\ref{eq:NLS}).
The main idea in these papers was to modify (\ref{eq:exponentialiteration}) to obtain an improved iteration bound by which there exists a constant $r \in (0,1)$ depending on $k,s$ and
$\delta,C>0$ depending also on the initial data such that for all times $t_0$:

\begin{equation}
\label{eq:polynomialiteration}
\|u(t_0+\delta)\|_{H^s}^2 \leq \|u(t_0)\|_{H^s}^2 + C \|u(t_0)\|_{H^s}^{2-r}.
\end{equation}

In \cite{B2}, (\ref{eq:polynomialiteration}) is proved by the \emph{Fourier multiplier method},  whereas in \cite{S,S2}, this bound is proved by using fine multilinear estimates.
The key to the latter approach was in the use of smoothing estimates similar to those used in \cite{KPV3}. A slightly different approach, based on the analysis from \cite{BGT}, is used to obtain the same iteration bound in \cite{CatW,Z}.

One can show that (\ref{eq:polynomialiteration}) implies:

\begin{equation}
\label{eq:firstpolynomialbound}
\|u(t)\|_{H^s}\lesssim_{s,\Phi} (1+|t|)^{\frac{1}{r}}.
\end{equation}

Other places where the idea of frequency decomposition was used to estimate the growth of Sobolev norms are  \cite{B5,B6,W}. Here, the authors are considering the periodic linear Schr\"{o}dinger Equation with real potential $V=V(x,t)$:

\begin{equation}
\label{eq:potentialequation}
i u_t +\Delta u= V u.
\end{equation}

Under rather restrictive smoothness assumptions on $V$ (for instance, in \cite{B5}, $V$ is taken to be jointly smooth in $x$ and $t$ with uniformly
bounded partial derivatives with respect to both of the variables), it is shown that solutions to (\ref{eq:potentialequation}) satisfy for all $\epsilon>0$ and all $t \in \mathbb{R}$:

\begin{equation}
\label{eq:tepsilon}
\|u(t)\|_{H^s}\lesssim_{s,\Phi,\epsilon} (1+|t|)^{\epsilon}
\end{equation}
in \cite{B5}, and, for some $r>0$
\begin{equation}
\label{eq:tepsilon'}
\|u(t)\|_{H^s}\lesssim_{s,\Phi} log(1+|t|)^r
\end{equation}
in \cite{B6,W}. The latter result requires even stronger assumptions on $V$.

\vspace{2mm}

The idea of the proof of (\ref{eq:tepsilon}),(\ref{eq:tepsilon'}) is to reduce the problem to one that is periodic in time and then to use localization of eigenfunctions of a certain linear differential operator together with separation properties of the eigenvalues of the Laplace operator on $S^1$. These separation properties can be deduced by elementary means on $S^1$. In \cite{B5}, the bound $(\ref{eq:tepsilon})$ is also proved on $S^d$, for $d\geq 2$. In this case, the separation properties are proved by a more sophisticated number theoretic argument.

Let us also note that recently, a new proof of $(\ref{eq:tepsilon})$ was given in \cite{De}. The argument given in this paper is based on an iterative change of variable. In addition to recovering the result $(\ref{eq:tepsilon})$ on any $d$-dimensional torus, the same bound is proved for the linear Schr\"{o}dinger equation on any Zoll manifold, i.e. on any compact manifold whose geodesic flow is periodic.

Arguing as in \cite{S}, one can obtain polynomial bounds for solutions to (\ref{eq:NLS}).
There doesn't seem to be an easy way to substitute $V=|u|^{2k}$ into (\ref{eq:potentialequation}) and bootstrap these polynomial bounds by applying the technique from \cite{B5} to obtain better bounds. The reason is that the reduction to the problem which is periodic in time doesn't work as soon as one has some growth in time of a fixed finite number of Sobolev norms.

The problem of Sobolev norm growth was also recently studied in \cite{CKSTT6}, but in the sense of bounding the growth from below. In this paper, the authors exhibit the existence of smooth solutions of the cubic defocusing nonlinear Schr\"{o}dinger equation on $\mathbb{T}^2$ whose $H^s$ norm is arbitrarily small at time zero and is arbitrarily large at some large finite time.

Let us note that the same method that was used in our paper can be applied in the context of the Hartree equation on $\mathbb{R},\mathbb{T}^2$, and $\mathbb{R}^2$. Furthermore, we can improve bounds for the cubic NLS on $\mathbb{R}^2$, which were previously obtained in \cite{CDKS}. Finally, we can use an additional modification of the argument and apply it to the cubic NLS on $\mathbb{R}$. For the cubic NLS on $\mathbb{R}$, which is completely integrable, we derive bounds that recover uniform bounds on integral Sobolev norms, up to a factor of $t^{0+}$. These results will be presented in our forthcoming papers \cite{SoSt1,SoSt2}.

\subsection{Techniques of the Proof.}

The main idea of the proof of Theorem \ref{Theorem 1} is to obtain a good iteration bound. We will use the idea, used in \cite{B5,B6,W}, of estimating the high-frequency part of the solution. Let $E^1$ denote an operator which, after an appropriate rescaling, essentially adds the square $L^2$ norm of the low frequency part and the square $H^s$ norm of the high frequency part of a function. The threshold between the low and high frequencies is the parameter $N>1$. With this definition, we show that there exist $\delta,C>0$ depending only on $\Phi$ such that for all times $t_0$:

\begin{equation}
\label{eq:Diterationboundkgeq2}
E^1(u(t_0+\delta))\leq(1+\frac{C}{N^{\frac{1}{2}-}})E^1(u(t_0)).
\end{equation}

\vspace{2mm}

One observes that (\ref{eq:Diterationboundkgeq2}) is more similar to (\ref{eq:exponentialiteration}) than
to (\ref{eq:polynomialiteration}). The key fact to observe is that, due to the present decay factor, iteration of (\ref{eq:Diterationboundkgeq2})
$O(N^{\frac{1}{2}-})$ times doesn't cause exponential growth in $E^1(u(t))$, as it did for $\|u(t)\|_{H^s}$ in (\ref{eq:exponentialbound}).

\vspace{2mm}

The crucial point is to obtain the decay factor in (\ref{eq:Diterationboundkgeq2}). The reason why this is difficult is that we are working in the periodic setting in which we don't have the improved bilinear Strichartz estimates proved in \cite{B7,CKSTT}. In \cite{CKSTT3}, one could fix this problem by rescaling the circle to add more dispersion and reproving the estimates in the rescaled setting. Finally, one could scale back to the original circle, keeping in mind the relationship between the scaling parameter, the time interval on which one is working, and the threshold between the ``high'' and the ``low'' frequencies. This approach is unsuccessful in our setting since it is impossible to scale back, because the time on which we can obtain nontrivial bounds tends to zero as the rescaling factor tends to infinity \footnote{We note that this is not the same phenomenon that occurs for super-critical equations. The reason why the rescaling here doesn't give the result is that there are too many constraints on all of the parameters.}.

\vspace{2mm}

We take:

\begin{equation}
\label{eq:definitionofE1}
E^1(f):=\|\mathcal{D}f\|_{L^2}^2.
\end{equation}

\vspace{2mm}

Here $\mathcal{D}$ is an appropriate Fourier multiplier. In this paper, we take the $\mathcal{D}$-operator to be an \emph{upside-down I-operator}, corresponding to high regularities. The idea of using an \emph{upside-down I-operator} first appeared in \cite{CKSTT2}, but in the low regularity context. The purpose of such an operator is to control the evolution of a Sobolev norm which is higher than the norm associated to a particular conserved quantity. This is the opposite from the standard \emph{I-operator}, which was first developed in \cite{CKSTT,CKSTT5,CKSTT2,CKSTT3,CKSTT4}.

\vspace{2mm}

We then want to estimate:

\begin{equation}
\label{eq:derivativeofnormsquared}
\int_I \frac{d}{dt}\|\mathcal{D}u(t)\|_{L^2}^2 dt.
\end{equation}
over an appropriate time interval $I$ whose length depends only on the initial data.

Similarly as in the papers by the I-Team, the multiplier $\theta$ corresponding to the operator $\mathcal{D}$ is not a rough cut-off. Hence, in frequency regimes where certain cancelation occurs, we can symmetrize the expression \label{eq:derivativeofnormsquared} and see how the cancelation manifests itself in terms of $\theta$, as in \cite{CKSTT2}. If there is no cancelation in the symmetrized expression, we need to look at the spacetime Fourier transform. Arguing as in \cite{BGT,Z}, we decompose our solution into components whose spacetime Fourier transform is localized in the parabolic region $\langle \tau + n^2 \rangle \sim L$. In each of the cases, we obtain a satisfactory decay factor. The mentioned symmetrizations and localizations allow us to compensate for the absence of an \emph{improved Strichartz estimate}.

\vspace{2mm}

It is important to note that the iteration bound we obtain in (\ref{eq:Diterationboundkgeq2}) doesn't depend on the nonlinearity. The reason for this is that, by Sobolev embedding, one has: $X^{\frac{1}{2}+,\frac{1}{2}+}\hookrightarrow L^{\infty}_{t,x}$. On the other hand, if one uses (\ref{eq:polynomialiteration}), the bounds one obtains become progressively worse as we increase $k$ since $r$ becomes smaller as $k$ grows. The use of (\ref{eq:Diterationboundkgeq2}) gives us better bounds than the use of (\ref{eq:polynomialiteration}) already in the case $k=2$. We can show that (\ref{eq:polynomialiteration}) holds for $r=\frac{1}{18(s-1)}-$, from where we deduce the bound:

\begin{equation}
\label{eq:firstbound}
\|u(t)\|_{H^s}\lesssim_{s,\Phi} (1+|t|)^{18(s-1)+}.
\end{equation}
This is a worse bound than (\ref{eq:nlsbound2}).

It should be noted that a better bound for the quintic equation than the one given by Theorem \ref{Theorem 1} was noted by Bourgain in the appendix of \cite{B4}. The techniques sketched out in this paper are completely different and come from dynamical systems. In \cite{B4}, the author uses an appropriate normal form which reduces the nonlinearity to its essential part, i.e. to the frequency configurations which are close to being resonant. The result in \cite{B4} is mentioned only for the quintic equation. As we will note, due to the fact that it uses Besov-type spaces, which don't embed into $L^{\infty}_{t,x}$, we can't seem to modify this method to apply it to (\ref{eq:NLS}) with $k>2$.

\vspace{2mm}

The proofs of Theorems \ref{Theorem 2}, \ref{Theorem 3}, and \ref{Theorem 4} are based on similar techniques. For $(\ref{eq:Hartree})$, and $(\ref{eq:potentialcubicnls})$, we can use the method of \emph{higher modified energies} as in \cite{CKSTT5,CKSTT2}, i.e. we can find an approximation $E^2(u)$ of $\|u\|_{H^s}^2$ that varies in time slower than $E^1(u)$. $E^2(u)$ is obtained as a multilinear correction of $E^1(u)$. We deduce better iteration bounds than the one in $(\ref{eq:Diterationboundkgeq2})$, from which the results in Theorem \ref{Theorem 2} and Theorem \ref{Theorem 3} follow. The technique of higher modified energies doesn't seem to work for $(\ref{eq:inhomogeneouscubicnls})$. Heuristically, this means that adding an inhomogeneity as in $(\ref{eq:inhomogeneouscubicnls})$ breaks the integrability of the cubic NLS more than adding the convolution potential in $(\ref{eq:Hartree})$, or adding the external potential in $(\ref{eq:potentialcubicnls})$. Let us note that the techniques  sketched in \cite{B4} could in principle be applied to $(\ref{eq:Hartree})$ to obtain the same result. The techniques from \cite{B4} don't seem to apply to $(\ref{eq:potentialcubicnls})$ and $(\ref{eq:inhomogeneouscubicnls})$.

\vspace{3mm}

\textbf{Organization of the paper:}

\vspace{2mm}

The paper is organized as follows: In Section 2, we define the notation we will be using, and we recall some facts from Harmonic Analysis. In Section 3, we prove Theorem \ref{Theorem 1}. In Section 4, we prove Theorems \ref{Theorem 2}, \ref{Theorem 3}, and \ref{Theorem 4}. Two useful facts, one about localization in $X^{s,b}$ spaces, and another about local well-posedness bounds, are proved in Appendix A and in Appendix B.

\vspace{3mm}

\textbf{Acknowledgements:}

\vspace{2mm}

The author would like to thank his Advisor, Gigliola Staffilani, for suggesting this problem, and for her help and encouragement. He would also like to thank Hans Christianson and Antti Knowles for several useful comments and discussions. Furthermore, the author is grateful to the referee for their careful reading and constructive comments. This work was partially supported by the NSF Grant DMS 0602678.

\vspace{3mm}

\section{Some notation.}

\vspace{2mm}

In our paper, we denote by $A \lesssim B$ an estimate of the form $A \leq CB$, for some $C>0$. If $C$ depends on $d$, we write $A \lesssim_d B$. We also write the latter condition as $C=C(d)$.

\vspace{2mm}

We are taking the convention for the Fourier transform on $S^1$ to be:

$$\widehat{f}(n):=\int_{S^1} f(x) e^{- i n x} dx.$$

\vspace{2mm}

On $S^1 \times \mathbb{R}$, we define the spacetime Fourier transform by:

$$\widetilde{u}(n,\tau):=\int_{S^1} \int_{\mathbb{R}} u(x,t) e^{-i n x -i t \tau} dt dx.$$

\vspace{2mm}

Let us take the following convention for the Japanese bracket $\langle \cdot \rangle$ :

$$\langle n \rangle: =\sqrt{1+|n|^2}.$$

\vspace{1mm}

Let us recall that we are working in Sobolev Spaces $H^s=H^s(S^1)$ on the circle, whose norms are defined
for $s \in \mathbb{R}$ by:

$$\|f\|_{H^s}:=\big(\sum_{n}|\widehat{f}(n)|^2 \langle n \rangle^{2s}\big)^{\frac{1}{2}}. $$

where $f:S^1 \rightarrow \mathbb{C}.$

\vspace{1mm}

Let us define $H^{\infty}(S^1):=\bigcap_{s>0}H^s(S^1).$

\vspace{1mm}

An important tool in our work will also be $X^{s,b}$ spaces. We recall that in our context, these spaces come from
the norm defined for $s,b \in \mathbb{R}$:

$$\|u\|_{X^{s,b}}:=\big(\sum_n \int |\widetilde{u}(n,\tau)|^2 \langle n \rangle^{2s} \langle \tau + n^2 \rangle^{2b} d \tau \big)^{\frac{1}{2}}.$$

where $u:S^1 \times \mathbb{R} \rightarrow \mathbb{C}$.

\vspace{2mm}

There are some key facts one should note about $X^{s,b}$ spaces:

By definition, one has:

\begin{equation}
\label{eq:L2tL2x}
\|u\|_{L^2_tL^2_x}=\|u\|_{X^{0,0}}.
\end{equation}

Using Sobolev embedding, one obtains:

\begin{equation}
\label{eq:Linftytx1}
\|u\|_{L^{\infty}_tL^{\infty}_x} \lesssim \|u\|_{X^{\frac{1}{2}+,\frac{1}{2}+}}
\end{equation}

\vspace{1mm}

and:

\vspace{1mm}

\begin{equation}
\label{eq:Linftytx2}
\|u\|_{L^{\infty}_tL^2_x} \lesssim \|u\|_{X^{0,\frac{1}{2}+}}.
\end{equation}

Interpolating between (\ref{eq:L2tL2x}) and (\ref{eq:Linftytx2}), it follows that:

\begin{equation}
\label{eq:L4tL2x}
\|u\|_{L^4_tL^2_x}
\lesssim \|u\|_{X^{0,\frac{1}{4}+}}.
\end{equation}

Two more key $X^{s,b}$ space estimates are the two following Strichartz inequalities:

\begin{equation}
\label{eq:strichartztorus}
\|u\|_{L^4_{t,x}}\lesssim
\|u\|_{X^{0,\frac{3}{8}}}.
\end{equation}

\begin{equation}
\label{eq:strichartztorus2}
\|u\|_{L^6_{t,x}}\lesssim
\|u\|_{X^{0+,\frac{1}{2}+}}.
\end{equation}

For the proof of (\ref{eq:strichartztorus}), one should consult Proposition 2.13. in \cite{Tao}.
A proof of (\ref{eq:strichartztorus2}) can be found in \cite{G}.
It is crucial to observe that both estimates are global in time.

\vspace{2mm}

We give some useful notation for multilinear expressions, which can also be found in \cite{CKSTT}. For $r \geq 2$, an even
integer, we define the hyperplane:

$$\Gamma_r:=\{(n_1,\ldots,n_r)\in \mathbb{Z}^r: n_1+\cdots n_r=0\},$$
endowed with the measure $\delta(n_1+\cdots + n_r)$.

\vspace{2mm}

If we are given a function $M_r=M_r(n_1,\cdots,n_r)$ on $\Gamma_r$, i.e. an
\emph{r-multiplier}, we define the \emph{r-linear functional} $\lambda_r(M_r;f_1,\ldots,f_r)$ by:

$$\lambda_r(M_r;f_1,\ldots,f_r):=\int_{\Gamma_r}M_r(n_1,\ldots,n_r)\prod_{j=1}^r \widehat{f_j}(n_j).$$

As in \cite{CKSTT}, we adopt the notation:

\begin{equation}
\label{eq:lambdar}
\lambda_r(M_r;f):=\lambda_r(M_r;f,\bar{f},\ldots,f,\bar{f}).
\end{equation}

We will also sometimes write $n_{i,j}$ for $n_i+n_j$ and $\theta_i$ for $\theta(n_i)$.

\subsection{An observation about $X^{s,b}$ spaces.}

Throughout the paper, we will need to consider quantities such as
$\|\chi_{[c,d]}(t)f\|_{X^{s,b}}$. We will show the following bound:

\begin{lemma}
\label{Lemma 2.1}

If $b \in (0,\frac{1}{2})$ and $s \in \mathbb{R}$, then, for $c,d \in \mathbb{R}$ such that $c<d$, one has:

\begin{equation}
\label{eq:Xsb localization}
\|\chi_{[c,d]}(t)u\|_{X^{s,b}}\lesssim \|u\|_{X^{s,b+}}
\end{equation}
where the implicit constant doesn't depend on $c,d$.
\end{lemma}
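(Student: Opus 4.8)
The plan is to reduce to the two extreme cases $b=0$ and $b=\tfrac12-$ (say $b=\tfrac12-\epsilon_0$ for small fixed $\epsilon_0$) and then interpolate, but actually the cleanest route is to work directly via the spacetime Fourier side. First I would reduce to $s=0$: the multiplier $\langle n\rangle^s$ commutes with multiplication in $t$, so it suffices to prove $\|\chi_{[c,d]}(t)u\|_{X^{0,b}}\lesssim \|u\|_{X^{0,b+}}$. Next, by writing $u$ as a superposition over $n$ of its Fourier coefficients in $x$, and noting that $\chi_{[c,d]}(t)$ acts only in $t$, the estimate decouples into a one-dimensional statement: for each frequency $n$, letting $g_n(t)$ be the inverse $\tau$-Fourier transform of $\widetilde u(n,\tau)$, one has $\langle \tau-n^2\rangle^b$ weights which, after the substitution $\tau\mapsto \tau+n^2$, become $\langle\tau\rangle^b$ (this corresponds to conjugating by the free evolution $e^{it\Delta}$, which is an $X^{0,b}$-isometry). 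So the whole lemma follows from the classical one-variable fact
\begin{equation}
\label{eq:onevar}
\|\chi_{[c,d]}(t)\,h\|_{H^b_t(\mathbb{R})}\lesssim \|h\|_{H^{b+}_t(\mathbb{R})},\quad b\in(0,\tfrac12),
\end{equation}
with constant independent of $c,d$ (translation invariance in $t$ kills the $c$-dependence, and one only needs to track the length, or rather the lack of dependence on it).

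To prove \eqref{eq:onevar} I would use the Sobolev-type characterization of $H^b$ for $0<b<\tfrac12$ via the Gagliardo seminorm: $\|f\|_{H^b}^2 \sim \|f\|_{L^2}^2 + \iint \frac{|f(t)-f(t')|^2}{|t-t'|^{1+2b}}\,dt\,dt'$. For $f=\chi_{[c,d]}h$, the $L^2$ part is immediate. For the double integral, split according to whether both $t,t'$ lie in $[c,d]$, both lie outside, or one is inside and one is outside. The "both inside" piece is bounded by the corresponding seminorm of $h$ itself. The "both outside" piece vanishes. The crossing term is the only one requiring work: when $t\in[c,d]$ and $t'\notin[c,d]$, we have $|f(t)-f(t')|=|h(t)|$, and we must bound $\int_{[c,d]}|h(t)|^2\big(\int_{t'\notin[c,d]}\frac{dt'}{|t-t'|^{1+2b}}\big)dt$. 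The inner integral is comparable to $\mathrm{dist}(t,\partial[c,d])^{-2b}$, so we are reduced to a Hardy-type inequality $\int_{[c,d]} |h(t)|^2\, \mathrm{dist}(t,\partial[c,d])^{-2b}\,dt \lesssim \|h\|_{H^{b+}}^2$ with a constant independent of $c,d$ — and here the extra $\epsilon$ of regularity is exactly what buys the uniformity. This is the standard one-dimensional Hardy inequality near a boundary point (for $2b<1$), applied at each of the two endpoints, and the constant depends only on $b$ and $\epsilon$.

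The main obstacle, and the only subtle point, is the uniformity of the constant in $c,d$. Translation invariance handles $c$. For independence of the length $d-c$: in the "both inside" term the seminorm of $\chi_{[c,d]}h$ restricted to the square $[c,d]^2$ is trivially $\le$ the full seminorm of $h$, with no length dependence. In the crossing term, the Hardy inequality at a single endpoint is a local statement and its constant is length-independent; one applies it near $c$ on, say, $[c,\frac{c+d}{2}]$ and near $d$ on $[\frac{c+d}{2},d]$, and on the leftover bulk $\mathrm{dist}(t,\partial[c,d])$ is bounded below so the weight is harmless. Thus no constant ever sees $d-c$. An alternative to the Gagliardo-seminorm bookkeeping, which I would mention as a remark, is to interpolate: at $b=0$ the estimate $\|\chi_{[c,d]}u\|_{X^{s,0}}=\|\chi_{[c,d]}u\|_{L^2_{t,x}}\le\|u\|_{L^2_{t,x}}=\|u\|_{X^{s,0}}$ is trivial with constant $1$, and one proves a substitute endpoint bound near $b=\tfrac12$ with an $\epsilon$-loss, then complex-interpolates in $b$; but the direct Hardy-inequality argument is cleaner and makes the length-independence transparent, so that is the route I would write up.
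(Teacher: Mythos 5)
Your proof is correct, and it takes a genuinely different route from the paper's. The paper argues by duality, writes $\chi_{[c,d]}(t)=\tfrac{1}{2}(\operatorname{sign}(t-c)-\operatorname{sign}(t-d))$, and passes to the spacetime Fourier side, where $\operatorname{sign}(t)$ becomes the Hilbert transform in $\tau$; it then splits into three cases according to the relative sizes of $\langle\tau-n^2\rangle$ and $\langle\tau'-n^2\rangle$, using $L^2$-boundedness of the Hilbert transform (after a dyadic decomposition in modulation) in the comparable case and direct kernel decay in the others, and finally treats $c\neq 0$ by conjugating with modulation operators. Your argument instead peels off the $\langle n\rangle^s$ weight, decouples in $n$, and conjugates by the free flow $e^{-in^2 t}$ so that the $X^{0,b}$ weight $\langle\tau-n^2\rangle^b$ becomes the flat $\langle\tau\rangle^b$; this correctly reduces the lemma to the one-variable statement $\|\chi_{[c,d]}h\|_{H^b_t}\lesssim\|h\|_{H^{b+}_t}$, which you prove via the Gagliardo seminorm, with the only nontrivial contribution being the crossing term, handled by the fractional Hardy inequality at each endpoint. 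This is cleaner and makes the $c,d$-independence transparent, since the 1D reduction kills the $n$- and $c$-dependence at once and no dyadic bookkeeping in modulation is needed. One small inaccuracy worth flagging: the global fractional Hardy inequality $\int_{\mathbb{R}}|h(t)|^2|t-c|^{-2b}\,dt\lesssim\|h\|_{\dot H^b}^2$ already holds for $0<b<\tfrac12$ with a constant depending only on $b$, so the extra $\epsilon$ of regularity is not what buys the $(c,d)$-uniformity — that uniformity is automatic from translation invariance and the homogeneity of the weight. The $\epsilon$ is simply the amount of loss the lemma permits (and the paper's remark after the statement indicates that a sharper version without loss is known). Your proof in fact establishes the stronger estimate $\|\chi_{[c,d]}u\|_{X^{s,b}}\lesssim\|u\|_{X^{s,b}}$ for $0<b<\tfrac12$, with the constant blowing up only as $b\to\tfrac12$.
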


A similar fact was proved in \cite{CKSTT4}, but in slightly different spaces.
Furthermore, let us mention that a stronger statement was mentioned in a remark after Proposition 32 in \cite{CafE}. For completeness, we present the proof of Lemma 2.1. in Appendix A.

\vspace{2mm}

From Lemma \ref{Lemma 2.1}, we deduce that in particular:
\begin{corollary}
For $c,d$ as above, one has:
\begin{equation}
\label{eq:key localization}
\|\chi_{[c,d]}u\|_{X^{0,\frac{3}{8}}}\lesssim \|u\|_{X^{0,\frac{3}{8}+}}
\end{equation}
\end{corollary}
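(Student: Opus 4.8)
The final statement to prove is the Corollary: for $c < d$, one has $\|\chi_{[c,d]}u\|_{X^{0,3/8}}\lesssim \|u\|_{X^{0,3/8+}}$, with implicit constant independent of $c,d$.

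This is essentially a direct specialization of Lemma~\ref{Lemma 2.1}. The plan is simply to apply the lemma with the parameter choices $s = 0$ and $b = \tfrac{3}{8}$. First I would check that these choices lie in the admissible range of the lemma: the lemma requires $b \in (0,\tfrac{1}{2})$ and $s \in \mathbb{R}$, and indeed $\tfrac{3}{8} \in (0,\tfrac{1}{2})$ while $s=0 \in \mathbb{R}$, so the hypotheses are met. Then Lemma~\ref{Lemma 2.1} yields $\|\chi_{[c,d]}(t)u\|_{X^{0,3/8}} \lesssim \|u\|_{X^{0,3/8+}}$ with a constant independent of $c,d$, which is exactly the claimed inequality (noting that $\tfrac{3}{8}+$ in the notation of the paper denotes $\tfrac{3}{8}+\epsilon$ for small $\epsilon>0$, matching the $b+$ appearing on the right-hand side of \eqref{eq:Xsb localization}).

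There is essentially no obstacle here; the only thing to be slightly careful about is the bookkeeping of the "$+$" notation, i.e. confirming that the $b+$ in Lemma~\ref{Lemma 2.1} with $b=\tfrac38$ is the same object as the $\tfrac38+$ appearing in the corollary's statement, which it is by the conventions fixed in Section~1.2. One could also remark that, more generally, the same reasoning gives $\|\chi_{[c,d]}u\|_{X^{0,b}} \lesssim \|u\|_{X^{0,b+}}$ for any $b \in (0,\tfrac12)$, and that the value $b=\tfrac38$ is singled out only because it is the exponent appearing in the Strichartz estimate \eqref{eq:strichartztorus}, which is where this corollary will be used (to localize that global-in-time Strichartz bound to a finite time interval). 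Thus the proof is a one-line invocation of the preceding lemma, and I would present it as such.
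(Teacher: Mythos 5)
Your proposal is correct and matches the paper exactly: the corollary is stated as an immediate specialization of Lemma~\ref{Lemma 2.1} with $s=0$, $b=\tfrac{3}{8}$, and the paper gives no further argument. Your verification that $\tfrac{3}{8}\in(0,\tfrac12)$ satisfies the lemma's hypothesis is precisely the only step needed.
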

This fact will be used later on.

\section{Quintic and Higher Order NLS.}

In this section, we will define the \emph{upside-down I-operator} $\mathcal{D}$. In order to use this operator effectively, we need to prove appropriate local-in-time bounds.
Finally, we use symmetrization to get good estimates on the growth of $\|\mathcal{D}u(t)\|_{L^2}^2$.

\vspace{2mm}

Throughout the first three parts of the section, we will prove the claim in the case $k=2$, for simplicity of notation. Generalizations to higher nonlinearities are given in the fourth part of this section.

\subsection{Definition of the $\mathcal{D}$ operator.}

Suppose $N>1$ is given. Let $\theta: \mathbb{Z}\rightarrow \mathbb{R}$ be given by:

\begin{equation}
\label{eq:theta}
\theta(n) :=
\begin{cases}
  \big(\frac{|n|}
  {N}\big)^s\,,   \mbox{if }|n| \geq N\\
  \,1,\,  \mbox{if } |n| \leq N
\end{cases}
\end{equation}

Then, if $f:S^1 \rightarrow \mathbb{C}$, we define $\mathcal{D}f$ by:

\begin{equation}
\label{eq:D operator}
\widehat{\mathcal{D}f}(n):=\theta(n)\hat{f}(n).
\end{equation}

We observe that:

\begin{equation}
\label{eq:bound on D}
\|\mathcal{D}f\|_{L^2}\lesssim_s \|f\|_{H^s}\lesssim_s N^s \|\mathcal{D}f\|_{L^2}.
\end{equation}

Our goal is to then estimate $\|\mathcal{D}u(t)\|_{L^2}$ , from which we can estimate  $\|u(t)\|_{H^s}$ by $(\ref{eq:bound on D})$.

\subsection{A local-in-time estimate and an approximation lemma.}

From our proof, we will note the key role of good local-in-time and associated approximation results. Here, we collect the statements of these results, whose proofs we give in Appendix B. The first result we want to show is that there exist $\delta=\delta(s,E(\Phi),M(\Phi)),C=C(s,E(\Phi),M(\Phi))>0$, such that for all $t_0 \in \mathbb{R}$, there exists a globally defined function $v:S^1 \times \mathbb{R} \rightarrow \mathbb{C}$ such that:

\begin{equation}
\label{eq:properties of v1}
v|_{[t_0,t_0+\delta]}=u|_{[t_0,t_0+\delta]}.
\end{equation}

\begin{equation}
\label{eq:properties of v2}
\|v\|_{X^{1,\frac{1}{2}+}}\leq C(s,E(\Phi),M(\Phi))
\end{equation}

\begin{equation}
\label{eq:properties of v3}
\|\mathcal{D}v\|_{X^{0,\frac{1}{2}+}}\leq C(s,E(\Phi),M(\Phi)) \|\mathcal{D}u(t_0)\|_{L^2}.
\end{equation}
Moreover, $\delta$ and $C$ can be chosen to depend continuously on the energy and mass.

\vspace{3mm}

\begin{proposition}
\label{Proposition 3.1}
Given $t_0 \in \mathbb{R}$, there exists a globally defined function $v:S^1 \times \mathbb{R} \rightarrow \mathbb{C}$ satisfying the properties (\ref{eq:properties of v1}),(\ref{eq:properties of v2}),(\ref{eq:properties of v3}).
\end{proposition}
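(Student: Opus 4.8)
The plan is to produce $v$ from the usual local well-posedness contraction in Bourgain spaces, globalized by smooth time cutoffs, and then to obtain (\ref{eq:properties of v3}) by commuting $\mathcal{D}$ through the Duhamel formula. By the time-translation invariance of (\ref{eq:NLS}) we may assume $t_0=0$. Set $R:=\|u(0)\|_{H^1}$; by mass and energy conservation together with (\ref{eq:H1uniformbound}), $R$ is bounded by a continuous function of $(E(\Phi),M(\Phi))$, and every parameter below will be chosen as an explicit continuous function of $R$. Fix $\psi\in C_c^\infty(\mathbb{R})$ with $\psi\equiv 1$ on $[-1,1]$ and write $\psi_\delta(t):=\psi(t/\delta)$.

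First I would run the contraction. Consider the globalized equation
\[
v(t)\;=\;\psi_1(t)\,e^{it\Delta}u(0)\;-\;i\,\psi_\delta(t)\int_0^t e^{i(t-t')\Delta}\big(\psi_\delta(t')\,|v|^{2k}v(t')\big)\,dt'.
\]
By the standard linear and cutoff Duhamel estimates in $X^{1,\frac{1}{2}+}$ (the latter carrying a positive power of $\delta$), together with H\"older's inequality, the Strichartz bounds (\ref{eq:strichartztorus})--(\ref{eq:strichartztorus2}), and the embedding (\ref{eq:Linftytx1}) applied to the low-regularity factors of the degree-$(2k+1)$ nonlinearity, the right-hand side maps the ball of radius $CR$ in $X^{1,\frac{1}{2}+}$ into itself and is a contraction there once $\delta=\delta(R)$ is small enough. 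This produces a globally defined $v$ with $\|v\|_{X^{1,\frac{1}{2}+}}\le C(E(\Phi),M(\Phi))$, which is (\ref{eq:properties of v2}); since $\psi_\delta\equiv 1$ on $[0,\delta]$, $v$ solves the genuine Duhamel equation there, so by local uniqueness $v|_{[0,\delta]}=u|_{[0,\delta]}$, which is (\ref{eq:properties of v1}).

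Next I would derive (\ref{eq:properties of v3}). Since $\mathcal{D}$ is a Fourier multiplier in $x$ it commutes with $e^{it\Delta}$, so applying $\mathcal{D}$ to the equation above and invoking the same estimates gives
\[
\|\mathcal{D}v\|_{X^{0,\frac{1}{2}+}}\;\lesssim\;\|\mathcal{D}u(0)\|_{L^2}\;+\;\delta^{0+}\,\big\|\mathcal{D}\big(|v|^{2k}v\big)\big\|_{X^{0,0}}.
\]
In the product $|v|^{2k}v=\prod_{j=1}^{2k+1}v_j^{\pm}$ the output frequency $n$ obeys $|n|\le(2k+1)\max_j|n_j|$, and since the multiplier $\theta$ of (\ref{eq:theta}) is nondecreasing in $|n|$ and satisfies $\theta(cn)\lesssim_k\theta(n)$ for $1\le c\le 2k+1$, we may bound $\theta(n)\lesssim_k\theta(n_{j^*})$ where $|n_{j^*}|=\max_j|n_j|$. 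Placing the $\mathcal{D}$-weight on that single factor, estimating it and one more factor in $L^4_{t,x}$ via (\ref{eq:strichartztorus}) and the remaining $2k-1$ factors in $L^\infty_{t,x}$ via (\ref{eq:Linftytx1}), one obtains
\[
\big\|\mathcal{D}\big(|v|^{2k}v\big)\big\|_{X^{0,0}}\;\lesssim\;\|\mathcal{D}v\|_{X^{0,\frac{1}{2}+}}\,\|v\|_{X^{1,\frac{1}{2}+}}^{2k}\;\le\;C(E(\Phi),M(\Phi))\,\|\mathcal{D}v\|_{X^{0,\frac{1}{2}+}}.
\]
Shrinking $\delta=\delta(R)$ further so that $\delta^{0+}$ times $C(E(\Phi),M(\Phi))$ (and the implicit constants) is at most $\frac{1}{2}$, the last term is absorbed, giving $\|\mathcal{D}v\|_{X^{0,\frac{1}{2}+}}\lesssim\|\mathcal{D}u(0)\|_{L^2}$, which is (\ref{eq:properties of v3}). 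Since $\delta$ and $C$ were chosen as continuous functions of $R$, hence of $(E(\Phi),M(\Phi))$, the final assertion follows.

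\textbf{Main obstacle.} The crux is the last absorption step. The multilinear bound on $\mathcal{D}(|v|^{2k}v)$ must be \emph{linear} in $\|\mathcal{D}v\|_{X^{0,\frac{1}{2}+}}$ — only the top-frequency factor may carry the $\mathcal{D}$-weight, which is exactly what the monotonicity of $\theta$ and the frequency inequality $|n|\le(2k+1)\max_j|n_j|$ buy us — and it must come with a genuinely small constant. That smallness cannot be extracted from $\|v\|_{X^{1,\frac{1}{2}+}}\lesssim C(E,M)$, which may be large; it must instead be produced by the time-cutoff (restriction-space) Duhamel estimate, which is where the factor $\delta^{0+}$ enters. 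Retaining this gain while keeping $v$ globally defined, and checking that the auxiliary H\"older/Strichartz bookkeeping for the remaining $2k$ factors costs only powers of $\|v\|_{X^{1,\frac{1}{2}+}}$, is the technical heart of the argument, to be carried out in Appendix B.
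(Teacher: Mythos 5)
Your overall strategy matches the paper's Appendix B proof — a fixed point in $X^{1,\frac12+}$ via time cutoffs, followed by applying $\mathcal{D}$ to Duhamel, using the quasi‑subadditivity of $\theta$ (the ``Fractional Leibniz rule'') to shuffle the weight onto a single high‑frequency factor, and an absorption to close. However, there are two places where the proposal as written has genuine gaps, and both are points the paper is at pains to address.

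First, the absorption step for (\ref{eq:properties of v3}) is circular unless you already know $\|\mathcal{D}v\|_{X^{0,\frac12+}}<\infty$. You construct $v$ as a fixed point in a ball of $X^{1,\frac12+}$, which gives no a priori information on $\|\mathcal{D}v\|_{X^{0,\frac12+}}\sim\|v\|_{X^{s,\frac12+}}$, and without finiteness the inequality
$\|\mathcal{D}v\|_{X^{0,\frac12+}}\le \|\mathcal{D}u(0)\|_{L^2}+\tfrac12\|\mathcal{D}v\|_{X^{0,\frac12+}}$
cannot be rearranged. The paper fixes this by running the contraction in the set $\Gamma=\{v:\|v\|_{X^{s,b}}\le R_1,\ \|v\|_{X^{1,b}}\le R_2\}$ equipped with the \emph{weaker} metric $d(v,w)=\|v-w\|_{X^{1,b}}$; this requires knowing $(\Gamma,d)$ is complete, which is exactly the content of Proposition \ref{Proposition 3.2} (proved via a weak lower‑semicontinuity argument from \cite{Ca}). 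Without something playing that role, your bootstrap is not justified.

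Second, the claim that ``the cutoff Duhamel estimate carries a positive power of $\delta$'' is false for $b=\frac12+\epsilon$: (\ref{eq:loc3}) gives a factor $\delta^{\frac{1-2b}{2}}=\delta^{-\epsilon}$, i.e.\ a \emph{negative} power. The positive power that makes absorption possible is manufactured elsewhere: in the nonlinear estimate, one of the nonlinearity factors (placed in $L^4_{t,x}$) is bounded by interpolating $X^{0,\frac38}$ between $X^{0,0}$ and $X^{0,b}$, and at the $X^{0,0}$ end one extracts $\delta^{\frac14}$ from the compact time support of the cutoff using (\ref{eq:L4tL2x}). The resulting exponent $\theta_0+\frac92(1-2b)$ is only positive for $\epsilon$ small (condition (\ref{eq:epsilonuvjet})), which is where the admissible range of $b$ is pinned down. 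You flag this step as ``the technical heart'' but the mechanism you name — the time‑localized Duhamel inequality — is the wrong source of the gain and, taken at face value, works against you. Both gaps are repairable along the paper's lines; the high‑level plan is otherwise the same.
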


\vspace{3mm}

In the proof of Proposition \ref{Proposition 3.1}, we need to use a ``persistence of regularity'' argument, which relies on the following fact:

\begin{proposition}
\label{Proposition 3.2}
Let $R>0,s\geq 1, B:=\{v:\|v\|_{X^{s,b}}\leq R\}$. Then (B,d) is complete as
a metric space if we take:
\begin{equation}
\label{eq:persistence} d(v,w):=\|v-w\|_{X^{1,b}}.
\end{equation}
\end{proposition}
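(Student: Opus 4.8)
The plan is to recognize $(B,d)$ as a closed ball of the Hilbert space $X^{s,b}$ carrying the (topologically weaker) metric induced by $X^{1,b}$, and to establish completeness by combining completeness of $X^{1,b}$ with lower semicontinuity of the $X^{s,b}$-norm. First I would note that since $s\geq 1$ we have $\langle n\rangle^{2}\leq\langle n\rangle^{2s}$, hence $\|w\|_{X^{1,b}}\leq\|w\|_{X^{s,b}}$ for all $w$; in particular $d(v,w)=\|v-w\|_{X^{1,b}}\leq 2R<\infty$ for $v,w\in B$, so $d$ is a genuine finite metric on $B$, and every $d$-Cauchy sequence in $B$ is Cauchy in the Banach space $X^{1,b}$.

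Next, let $(v_j)_j$ be a $d$-Cauchy sequence in $B$. Since $X^{1,b}$ is complete (it is the $L^2$ space of $\mathbb{Z}\times\mathbb{R}$ with respect to counting measure in $n$ times $\langle n\rangle^{2}\langle\tau-n^2\rangle^{2b}\,d\tau$), there is $v\in X^{1,b}$ with $\|v_j-v\|_{X^{1,b}}\to 0$. It remains to verify that $v\in B$, i.e. $\|v\|_{X^{s,b}}\leq R$; once this is known, $v_j\to v$ in $(B,d)$ is immediate because $d(v_j,v)=\|v_j-v\|_{X^{1,b}}\to 0$, and the proof is finished.

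To see $v\in B$ I would pass to the Fourier side. Convergence $v_j\to v$ in $X^{1,b}$ means $\widetilde{v_j}\to\widetilde{v}$ in $L^2$ of $\mathbb{Z}\times\mathbb{R}$ against the weight $\langle n\rangle^{2}\langle\tau-n^2\rangle^{2b}$; hence along a subsequence $(j_k)$ we have $\widetilde{v_{j_k}}(n,\tau)\to\widetilde{v}(n,\tau)$ for a.e.\ $(n,\tau)$. Applying Fatou's lemma termwise in $n$ (and in $\tau$) gives
\[
\|v\|_{X^{s,b}}^2=\sum_n\int|\widetilde{v}(n,\tau)|^2\langle n\rangle^{2s}\langle\tau-n^2\rangle^{2b}\,d\tau
\leq\liminf_{k\to\infty}\|v_{j_k}\|_{X^{s,b}}^2\leq R^2,
\]
so $\|v\|_{X^{s,b}}\leq R$ and $v\in B$, as desired.

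I do not expect a real obstacle here: the one point requiring care is that $d$ is a priori only the metric of the weaker space, so membership $v\in B$ must be recovered separately, and the extraction of an a.e.-convergent subsequence followed by Fatou (equivalently, weak lower semicontinuity of the Hilbert-space norm $\|\cdot\|_{X^{s,b}}$) is precisely the right tool. An alternative phrasing of the last step: $(v_j)$ is bounded in the Hilbert space $X^{s,b}$, so a subsequence converges weakly there to some $w$ with $\|w\|_{X^{s,b}}\leq\liminf_k\|v_{j_k}\|_{X^{s,b}}\leq R$; since the continuous inclusion $X^{s,b}\hookrightarrow X^{1,b}$ forces this weak limit to coincide with the strong $X^{1,b}$-limit $v$, we again conclude $v=w\in B$.
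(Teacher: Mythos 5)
Your proof is correct, and the overall architecture matches the paper's: establish the limit $v$ via completeness of the topologically weaker space $X^{1,b}$, then recover the membership $v\in B$ by a lower-semicontinuity argument on the $X^{s,b}$ norm, using an a.e.\ convergent subsequence of the Fourier transforms. The difference lies in the tool used for the last step. The paper invokes a ready-made theorem of Cazenave (Theorem 1.2.5 in \cite{Ca}) about boundedness of an $L^p(I,X)$-bounded sequence whose time slices converge weakly a.e.\ in a weaker Banach space $Y$, specialized to the sequence spaces $X=h^s_n$, $Y=h^1_n$ with $p=q=2$. Your version replaces that black box by a direct application of Fatou's lemma on the product measure space $\mathbb{Z}\times\mathbb{R}$, which is both more elementary and more transparent here: since $X^{s,b}$ is a weighted $L^2$, lower semicontinuity of its norm under a.e.\ pointwise convergence is exactly Fatou. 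Your alternative phrasing via weak sequential compactness of bounded sets in the Hilbert space $X^{s,b}$, together with the observation that the weak $X^{s,b}$-limit must coincide with the strong $X^{1,b}$-limit because the inclusion $X^{s,b}\hookrightarrow X^{1,b}$ is continuous and weak limits are unique, is likewise valid and perhaps closest in spirit to what the Cazenave theorem is proving internally. In short, you obtain the same result by a shorter, self-contained route; the paper's appeal to Cazenave is convenient but not essential in this Hilbert-space setting.
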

A related technical fact that we will need to use in the proof of the Theorem \ref{Theorem 1}, and later, in the proofs of the other Theorems is the following:

\begin{proposition}
\label{Proposition 3.3}(Approximation Lemma)

If $u$ satisfies:

\begin{equation}
\begin{cases}
i u_t + \Delta u=|u|^{2k}u,\\
u(x,0)=\Phi(x).
\end{cases}
\end{equation}

and if the sequence $(u^{(n)})$ satisfies:

\begin{equation}
\begin{cases}
i u^{(n)}_t + \Delta u^{(n)}=|u^{(n)}|^{2k}u^{(n)},\\
u^{(n)}(x,0)=\Phi_n(x).
\end{cases}
\end{equation}

where $\Phi_n \in C^{\infty}(S^1)$ and $\Phi_n \stackrel{H^s}{\longrightarrow} \Phi$, then, one has for all $t$:

$$u^{(n)}(t)\stackrel{H^s}{\longrightarrow} u(t).$$

\end{proposition}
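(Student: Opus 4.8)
The plan is to run a standard contraction/iteration argument in $X^{s,b}$ spaces, exploiting the fact that the flow map of the nonlinear Schrödinger equation $(\ref{eq:NLS})$ is continuous on $H^s$ on small time intervals, and then to propagate this to arbitrary times by bootstrapping with the a priori $H^1$ bound $(\ref{eq:H1uniformbound})$. First I would fix $s \geq 1$ and recall that the local theory for $(\ref{eq:NLS})$ in $H^s(S^1)$ — based on the Strichartz estimates $(\ref{eq:strichartztorus})$, $(\ref{eq:strichartztorus2})$ and the embedding $(\ref{eq:Linftytx1})$ — gives a local existence time $\delta_0 = \delta_0(\|\Phi\|_{H^1})$ depending only on the $H^1$ norm (the subcritical nature of the problem on $S^1$ lets the higher norm ride along), together with a Lipschitz bound of the form $\|u - u^{(n)}\|_{X^{s,\frac12+}([0,\delta_0])} \lesssim \|\Phi - \Phi_n\|_{H^s}$, once $n$ is large enough that $\|\Phi_n\|_{H^1} \leq 2\|\Phi\|_{H^1}$. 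Combined with the embedding $X^{s,\frac12+} \hookrightarrow C_t H^s$ (which follows from $(\ref{eq:Linftytx2})$ applied after the weight $\langle n\rangle^s$), this yields $u^{(n)}(t) \to u(t)$ in $H^s$ uniformly for $t \in [0,\delta_0]$.

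The second step is the continuation argument. Since $u$ is a global solution with $\|u(t)\|_{H^1} \leq C(\Phi)$ for all $t$, the local existence time $\delta_0$ can be taken uniform along the whole trajectory. Likewise, since $\Phi_n \to \Phi$ in $H^s \hookrightarrow H^1$, for $n$ large the solutions $u^{(n)}$ also satisfy a uniform-in-time $H^1$ bound (via mass and energy conservation for $(\ref{eq:NLS})$, exactly as in $(\ref{eq:H1uniformbound})$, using that the energy and mass of $\Phi_n$ converge to those of $\Phi$), so $u^{(n)}$ is global as well and its local existence time is also bounded below by $\delta_0$, uniformly in $t$ and in $n \geq n_0$. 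Now fix an arbitrary $t$; write $[0,t]$ as a union of $\lceil t/\delta_0 \rceil$ subintervals of length $\delta_0$. On the first subinterval we have $H^s$-convergence of the data and hence, by Step 1, of the solution at time $\delta_0$. Feeding $u^{(n)}(\delta_0) \to u(\delta_0)$ in $H^s$ as the new initial data, Step 1 applies again on $[\delta_0, 2\delta_0]$, and so on. After finitely many steps we reach time $t$ and conclude $u^{(n)}(t) \to u(t)$ in $H^s$. (The number of steps is finite and fixed once $t$ is fixed, so no issue arises from iterating the implied constants.)

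I expect the main obstacle to be making the local theory genuinely uniform in the higher norm: one must check that the local existence time for $(\ref{eq:NLS})$ in $H^s$ depends only on $\|\Phi\|_{H^1}$ and not on $\|\Phi\|_{H^s}$, and that the difference estimate closes in $X^{s,\frac12+}$ with the weaker $H^s$-difference of data on the right-hand side while the individual solutions are merely controlled in $X^{1,\frac12+}$ — this is precisely the ``persistence of regularity'' phenomenon formalized in Proposition \ref{Proposition 3.2}, and it is why the difference should be measured in the low norm (cf. the metric $d(v,w) = \|v-w\|_{X^{1,b}}$ there) while existence is tracked in the high norm. The nonlinearity $|u|^{2k}u$ is handled multilinearly by distributing $L^\infty_{t,x}$ bounds from $(\ref{eq:Linftytx1})$ on all but one or two factors and an $L^4_{t,x}$ or $L^2_{t,x}$ bound, via $(\ref{eq:strichartztorus})$ and $(\ref{eq:L2tL2x})$, on the remaining ones, together with Lemma \ref{Lemma 2.1} to restrict these global estimates to the time interval at hand; the details are routine and parallel to the proof of Proposition \ref{Proposition 3.1}, so I would defer them to Appendix B as the paper indicates.
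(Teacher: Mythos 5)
Your overall strategy matches the paper's proof in Appendix B exactly: set up the local fixed-point map, obtain a Lipschitz estimate in $X^{s,b'}$ for the difference $v-v^{(n)}$ with the $H^s$-difference of the data on the right-hand side, pass to $C_t H^s_x$ via the embedding $X^{s,\frac12+}\hookrightarrow L^\infty_t H^s_x$, and iterate over $\sim T/\delta$ subintervals using the uniform $H^1$ bound. The approximation at arbitrary time then follows by finitely many applications of the local Lipschitz bound.

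The one imprecise step is your claim that the difference estimate closes in $X^{s,\frac12+}$ with $\delta_0$ depending only on $\|\Phi\|_{H^1}$ ``while the individual solutions are merely controlled in $X^{1,\frac12+}$.'' When one distributes $\langle n\rangle^s$ by the fractional Leibniz rule $\theta(n_1+\cdots+n_{2k+1}) \lesssim \max_j \theta(n_j)$ in estimating $N(v)-N(v^{(n)})$, the largest weight can land on a factor of $v$ or $v^{(n)}$ rather than on the difference factor, yielding terms of schematic form $\|v\|_{X^{s,b'}}\cdot(\cdots)\cdot\|v-v^{(n)}\|_{X^{1,b'}}$. Thus the absorption in $X^{s,b'}$ unavoidably involves $\|v\|_{X^{s,b'}}\lesssim \delta^{\frac{1-2b'}{2}}\|\Phi\|_{H^s}$, not just the $X^{1,b'}$ norm; Proposition \ref{Proposition 3.2} gives completeness of the ball in the weak metric for the existence argument, but it does not make the high-norm appearance in the difference estimate disappear. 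The paper's resolution is to first fix $T$, bound $\|u(t)\|_{H^s}$ on $[0,T]$ by the exponential quantity $C_2=C_2(s,E(\Phi),M(\Phi),\|\Phi\|_{H^s},T)$ coming from iterating the crude LWP bound, and then shrink $\delta$ until $\tilde c\,\delta^{r_0}P(\delta^{\frac{1-2b'}{2}}C_2)\le \tfrac12$ --- so $\delta$ does in fact depend on $T$ and $\|\Phi\|_{H^s}$. This costs nothing: for fixed $t$ the number of subintervals is still finite, which is exactly the point you make about iterating implied constants. You could also reach the same conclusion by a two-pass argument (close the absorption in $X^{1,b'}$ with $\delta=\delta(\|\Phi\|_{H^1})$, then insert the resulting bound into the $X^{s,b'}$ estimate without further shrinking $\delta$), but either way the final Lipschitz constant, and in the paper's version $\delta$ itself, depends on $\|\Phi\|_{H^s}$ and $T$, not only on the $H^1$ data.
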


\vspace{3mm}

The mentioned approximation Lemma allows us to work with smooth solutions and pass to the limit in the end.
Namely, we note that if we take initial data $\Phi_n$ as earlier, then $u^{(n)}(t)$ will belong to $H^{\infty}(S^1)$ for all $t$. On the other hand, by continuity of mass, energy, and the $H^s$ norm on $H^s$, it follows that:

$$M(\Phi_n) \rightarrow M(\Phi),\,E(\Phi_n) \rightarrow E(\Phi),\,\|\Phi_n\|_{H^s} \rightarrow \|\Phi\|_{H^s}.$$

Suppose that we knew that Theorem \ref{Theorem 1} were true in the case of smooth solutions. Then, it would follow that for all $t \in \mathbb{R}$:

$$\|u^{(n)}(t)\|_{H^s}\leq C(s,k,E(\Phi_n),M(\Phi_n)) (1+|t|)^{2s+}\|\Phi_n\|_{H^s},$$

The claim for $u$ would now follow by applying the continuity properties of $C$ and the approximation Lemma.

\vspace{3mm}

We will henceforth work with $\Phi \in C^{\infty}(S^1)$. This implies that $u(t) \in H^{\infty}(S^1)$ for all $t$.
The claimed result is then deduced from this special case by the approximation procedure given earlier.
As we will see, the analogue of Proposition \ref{Proposition 3.1} holds for $(\ref{eq:Hartree})$,$(\ref{eq:potentialcubicnls})$, and for $(\ref{eq:inhomogeneouscubicnls})$. A similar argument shows that for these equations, it suffices to consider the case when $\Phi \in C^{\infty}$. The advantage of working with smooth solutions is that all the formal calculations will then be well-defined.

\vspace{3mm}

\subsection{Control on the increment of $\|\mathcal{D}u(t)\|_{L^2}^2$.}

For $t \in [t_0,t_0+\delta]$, we can work with $\mathcal{D}v(t)$ instead of with $\mathcal{D}u(t)$, where $v$
is the object we had constructed earlier. By our smoothness assumption, we know $v(t) \in H^{\infty}(S^1)$.

\vspace{2mm}

Now, for $t \in [t_0,t_0+\delta]$, one has \footnote{We are using the fact that $v(t) \in H^{\infty}(S^1)$ in order to deduce that this quantity is finite!}:

$$\frac{d}{dt} \|\mathcal{D}v(t)\|_{L^2}^2=2Re\,\langle \mathcal{D}v_t,\mathcal{D}v \rangle=2Re\, \langle i\mathcal{D}\Delta v - i \mathcal{D}(v\bar{v}v\bar{v}v),\mathcal{D}v\rangle$$

Since  $Re\, \langle i\mathcal{D}\Delta v,\mathcal{D}v \rangle=0$, this expression equals:

$$=-2Re\,\langle i\mathcal{D}(v\bar{v}v\bar{v}v),\mathcal{D}v\rangle.$$

After an appropriate symmetrization, by using notation as in Section 2, and arguing as in \cite{CKSTT2}, we get that this expression equals:
$$\frac{1}{3}i \cdot \lambda_6((\theta(n_1))^2-(\theta(n_2))^2+(\theta(n_3))^2-(\theta(n_4))^2+(\theta(n_5))^2-(\theta(n_6))^2;v(t)).$$
Let us take:

$$M_6(n_1,n_2,n_3,n_4,n_5,n_6):=
(\theta(n_1))^2-(\theta(n_2))^2+(\theta(n_3))^2-(\theta(n_4))^2+(\theta(n_5))^2-(\theta(n_6))^2.$$

We now analyze:

$$\|\mathcal{D}u(t_0+\delta)\|_{L^2}^2-\|\mathcal{D}u(t_0)\|_{L^2}^2=\|\mathcal{D}v(t_0+\delta)\|_{L^2}^2-
\|\mathcal{D}v(t_0)\|_{L^2}^2=$$
\vspace{2mm}
$$=\int_{t_0}^{t_0+\delta} \frac{d}{dt}\,\,\|\mathcal{D}v(t)\|_{L^2}^2dt=$$
\vspace{2mm}
$$=\frac{1}{3}i\,\Big(\sum_{n_1+n_2+n_3+n_4+n_5+n_6=0}\int_{t_0}^{t_0+\delta} M_6(n_1,n_2,n_3,n_4,n_5,n_6)
\hat{v}(n_1)\hat{\bar{v}}(n_2)\hat{v}(n_3)\hat{\bar{v}}(n_4)\hat{v}(n_5)\hat{\bar{v}}(n_6)dt\Big)=$$
\vspace{2mm}
$$=\frac{1}{3}i\,\Big(\sum_{n_1-n_2+n_3-n_4+n_5-n_6=0}$$
\vspace{2mm}
\begin{equation}
\label{eq:2zvijezde}
\int_{t_0}^{t_0+\delta} M_6(n_1,n_2,n_3,n_4,n_5,n_6)\hat{v}(n_1)\overline{\hat{v}(n_2)}\hat{v}(n_3)\overline{\hat{v}(n_4)}\hat{v}(n_5)
\overline{\hat{v}(n_6)}dt\Big)\,\,=:I
\end{equation}

\vspace{2mm}

We want to prove an appropriate decay bound on the increment. The bound that we will prove is:

\vspace{2mm}

\begin{lemma}(Iteration Bound)
\label{Bigstar1}
For all $t_0 \in \mathbb{R}$, one has: $$\big|\|\mathcal{D}v(t_0+\delta)\|_{L^2}^2-\|\mathcal{D}v(t_0)\|_{L^2}^2 \big|\lesssim \frac{1}{N^{\frac{1}{2}-}}\|\mathcal{D}v(t_0)\|_{L^2}^2.$$
\end{lemma}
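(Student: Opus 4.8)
The plan is to estimate the multilinear expression $I$ in \eqref{eq:2zvijezde} by exploiting the cancellation encoded in the multiplier $M_6$ together with the smoothing properties of the $X^{s,b}$ norms. First I would observe that on the hyperplane $\Gamma_6$ the multiplier $M_6 = \sum_{j} (-1)^{j+1}\theta(n_j)^2$ vanishes identically when all frequencies are $\le N$ (since then each $\theta(n_j)=1$ and the signs cancel), so the sum in $I$ is effectively supported on the set where at least one frequency, hence the largest one $n^* := \max_j |n_j|$, satisfies $n^* \gtrsim N$. I would order the frequencies as $|n_{j_1}| \ge |n_{j_2}| \ge \cdots$, and using $n_1 - n_2 + \cdots - n_6 = 0$ note the two largest are comparable, $|n_{j_1}| \sim |n_{j_2}|$.

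The key pointwise estimate on the multiplier is a ``mean value'' type bound: on $\Gamma_6$,
$$|M_6(n_1,\ldots,n_6)| \lesssim \frac{|n_{j_2}|}{|n_{j_1}|}\,\theta(n_{j_1})^2 + (\text{lower order terms}),$$
or more precisely a bound of the form $|M_6| \lesssim \frac{n^* }{N}\cdot\frac{\theta(n^*)^2}{n^*}\cdot(\text{something}) $ — the honest statement being that the derivative of $\theta(\cdot)^2$ is gained, producing a factor $\langle n_{j_3}\rangle / \langle n_{j_1}\rangle$ relative to $\theta(n_{j_1})^2 \sim \theta(n_{j_1})\theta(n_{j_2})$. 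The point is that one extracts a gain of the form $N^{-(\frac12-)}$ after distributing $\theta(n_{j_1})\theta(n_{j_2})$ onto the two highest-frequency factors (turning $\hat v(n_{j_1}), \hat v(n_{j_2})$ into $\widehat{\mathcal D v}$) and absorbing the remaining negative power of $n_{j_1}$ together with a tiny loss into the low-frequency/dispersive factors. Concretely, after this the integral is bounded by
$$|I| \lesssim \frac{1}{N^{\frac12-}}\,\|\mathcal D v\|_{X^{0,\frac12+}}^2 \, \|v\|_{X^{\frac12+,\frac12+}}^4,$$
obtained by placing the two $\mathcal D v$ factors in $L^\infty_t L^2_x$ (via \eqref{eq:Linftytx2}) and the four remaining factors in $L^\infty_{t,x}$ (via \eqref{eq:Linftytx1}), or alternatively by a Hölder scheme using the $L^4_{t,x}$ and $L^6_{t,x}$ Strichartz estimates \eqref{eq:strichartztorus}, \eqref{eq:strichartztorus2} to handle the case where one cannot afford all factors in $L^\infty$. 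One then invokes Proposition \ref{Proposition 3.1}: property \eqref{eq:properties of v2} controls $\|v\|_{X^{1,\frac12+}} \gtrsim \|v\|_{X^{\frac12+,\frac12+}}$ by a constant depending only on $(s,E(\Phi),M(\Phi))$, and property \eqref{eq:properties of v3} gives $\|\mathcal D v\|_{X^{0,\frac12+}} \lesssim \|\mathcal D u(t_0)\|_{L^2}$. Since $v$ agrees with $u$ on $[t_0,t_0+\delta]$ by \eqref{eq:properties of v1}, and since the integral defining $I$ only sees that interval (here I would insert a sharp cutoff $\chi_{[t_0,t_0+\delta]}$ and use Lemma \ref{Lemma 2.1} to pass freely between $v$ and $\chi v$ in $X^{s,b}$ at the cost of an $\epsilon$), we conclude $|I| \lesssim N^{-(\frac12-)}\|\mathcal D u(t_0)\|_{L^2}^2 = N^{-(\frac12-)}\|\mathcal D v(t_0)\|_{L^2}^2$, which is exactly the claimed bound.

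The main obstacle, and the heart of the argument, is establishing the pointwise multiplier bound with precisely the exponent $\frac12-$ and verifying that the leftover frequency factors can genuinely be absorbed. Unlike the non-periodic setting, there is no improved bilinear Strichartz estimate available, so the naive distribution of derivatives leaves one short; the resolution is to split into frequency regimes according to how $n^*$ compares with $N$ and with the other frequencies, and in the resonant-type regime where the symmetrized multiplier exhibits no extra cancellation, to further decompose each factor by the size $L$ of $\langle \tau - n^2\rangle$ of its spacetime Fourier support (as in \cite{BGT,Z}) and sum over the dyadic pieces. Balancing the $L$-losses against the frequency gain is what forces the endpoint $\frac12-$ rather than $\frac12$. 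I expect the bookkeeping of these cases to be the longest part, but each individual case reduces to an application of Hölder's inequality together with \eqref{eq:Linftytx1}, \eqref{eq:Linftytx2}, \eqref{eq:L4tL2x}, \eqref{eq:strichartztorus}, \eqref{eq:strichartztorus2} and the elementary bracket inequality $\langle \tau - n^2\rangle \gtrsim$ the resonance function on $\Gamma_6$.
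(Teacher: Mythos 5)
Your overall plan—symmetrize, extract multiplier cancellation when available, fall back to modulation/Strichartz arguments when not, then pass to $X^{s,b}$ via Proposition \ref{Proposition 3.1}—is the right approach and matches the paper at a high level. But the proposal has a genuine gap in its central pointwise estimate on $M_6$, and that gap propagates into the Hölder scheme.

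The claimed ``mean-value'' bound $|M_6|\lesssim \frac{\langle n_{j_3}\rangle}{\langle n_{j_1}\rangle}\theta(n_{j_1})\theta(n_{j_2})$ is \emph{false} in an essential range of configurations: whenever the two largest frequencies $n_{j_1}\sim n_{j_2}$ occupy slots of $M_6$ with the \emph{same} sign (say both in ``$+$'' slots), the corresponding terms $\theta(n_{j_1})^2+\theta(n_{j_2})^2$ \emph{add} rather than cancel, so $|M_6|\sim\theta(n_{j_1})^2$ with no small factor—even if all the other $n_j$ vanish. Such configurations are open and non-degenerate (e.g.\ $n_1\approx -n_3$, both $\gg N$, the rest small). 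The dichotomy in the paper's proof is precisely this sign dichotomy (``Big Case 1'' vs.\ ``Big Case 2''), not a dichotomy governed by the sizes $n^*$ vs.\ $N$ vs.\ the other frequencies, which is how your proposal frames the split. Without pinning this down, the ``resonant-type regime'' you invoke is not well-defined, and one cannot say when the multiplier cancellation is available and when the modulation argument must take over.

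In the same-sign case the decay has to come from modulations, and the key algebraic input—absent from the proposal—is that after reordering so that the hyperplane is $n_1+n_2+n_3-n_4-n_5-n_6=0$ with $|n_1|\sim|n_2|$ dominant, the quantity $n_1^2+n_2^2\geq |n_1|^2\gtrsim N_1^2$ cannot cancel, unlike $n_1^2-n_2^2$. On the support of the convolution in $\tau$ this forces $L_1L_2\gtrsim N_1^2$ (or $L_3\gtrsim N_1^2$) after the Littlewood--Paley-in-modulation decomposition, which is what the parabolic $L$-decomposition actually exploits. Your phrase ``bracket inequality $\langle\tau-n^2\rangle\gtrsim$ the resonance function'' points in this direction, but the crucial observation that the resonance function is automatically $\gtrsim N_1^2$ \emph{because} the two large frequencies share a sign is not stated, and it is exactly what makes the case salvageable.

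Finally, the Hölder scheme you propose as the default—two factors in $L^\infty_tL^2_x$ and four in $L^\infty_{t,x}$—gives at best $N_3^{-\frac12+}$ from each low-frequency factor (via $X^{\frac12+,\frac12+}\hookrightarrow L^\infty_{t,x}$), which is too weak when $N_3\gtrsim N_1^{\frac12}$: you need the full $N_3^{-1}$ gain from $X^{1,\frac12+}$ against an $L^4_{t,x}$ Strichartz placement (as in (\ref{eq:strichartztorus})), not an $L^\infty_{t,x}$ placement. You gesture at ``alternatively by a Hölder scheme using $L^4_{t,x}$ and $L^6_{t,x}$,'' which is the right instinct, but the proposal should commit to the $L^4\times L^4\times L^4\times L^4\times L^\infty\times L^\infty$ split (with the cutoff $\chi_{[t_0,t_0+\delta]}$ on a single factor and Lemma \ref{Lemma 2.1} applied there) so that the $N_3^{-1}, N_4^{-1}$ gains are actually realized and the dyadic sum over $(N_1,\ldots,N_6)$ closes.
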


From the proof, it will follow that the implied constant depends only on $(s,Energy,Mass)$, and hence is uniform in time. We call this constant $C=C(s,Energy,Mass)>0$. In fact, by construction, it will follow that all the implied constants we obtain will depend continuously on energy and mass, and hence will be continuous functions of $\Phi$ w.r.t to the $H^1$ norm. For brevity, we will suppress this fact in our further arguments.

\vspace{2mm}

Let us first observe how Lemma \ref{Bigstar1} implies Theorem \ref{Theorem 1} for $k=2$.
From Lemma \ref{Bigstar1}, and for the $C$ constructed earlier, it follows that:

$$\|\mathcal{D}u(\delta)\|_{L^2}^2 \leq (1+\frac{C}{N^{\frac{1}{2}-}})\|\mathcal{D}\Phi\|_{L^2}^2$$
The same $C$ satisfies:

\begin{equation}
\label{eq:Bigstar}
\forall t_0 \in \mathbb{R}\,, \|\mathcal{D}u(t_0+\delta)\|_{L^2}^2 \leq (1+\frac{C}{N^{\frac{1}{2}-}})\|\mathcal{D}u(t_0)\|_{L^2}^2
\end{equation}
Using (\ref{eq:Bigstar}) iteratively, we obtain that \footnote{
Strictly speaking, we are using $(\ref{eq:properties of v3})$ to deduce that we can get the bound for all such times, and not just those which are a multiple of $\delta$.} $\forall T>1:$ \,

$$\|\mathcal{D}u(T)\|_{L^2}^2 \leq (1+\frac{C}{N^{\frac{1}{2}-}})^{\lceil \frac{T}{\delta} \rceil} \|\mathcal{D}\Phi\|_{L^2}^2 $$
i.e. there exists $\alpha=\alpha(s,Energy,Mass)>0$ s.t. for all $T>1$, one has:

\begin{equation}
\label{eq:Iteration1}
\|\mathcal{D}u(T)\|_{L^2}^2 \leq (1+\frac{C}{N^{\frac{1}{2}-}})^{\alpha T} \|\mathcal{D}\Phi\|_{L^2}^2
\end{equation}
For $\lambda_1,\lambda_2>0$, we know:

\begin{equation}
\label{eq:limit}
\lim_{x \to +\infty} \Big(1+\frac{1}{\lambda_1 x}\Big)^{\lambda_2 x}=e^{\frac{\lambda_2}{\lambda_1}}<\infty
\end{equation}
By using (\ref{eq:Iteration1}) and (\ref{eq:limit}), we can take:

\begin{equation}
\label{eq:Iteration2}
T \sim N^{\frac{1}{2}-}
\end{equation}

\begin{equation}
\label{eq:Iteration3}
\|\mathcal{D}u(T)\|_{L^2}\lesssim \|\mathcal{D}\Phi\|_{L^2}
\end{equation}
Recalling (\ref{eq:bound on D}), and using (\ref{eq:Iteration3}), (\ref{eq:Iteration2}), and the fact that $T>1$, we obtain:

$$\|u(T)\|_{H^s} \lesssim N^s \|\mathcal{D}u(T)\|_{L^2} \lesssim N^s \|\mathcal{D}\Phi\|_{L^2}\lesssim N^s \|\Phi\|_{H^s} $$
\vspace{1mm}
\begin{equation}
\label{eq:QB1}
\lesssim T^{2s+} \|\Phi\|_{H^s}\lesssim (1+T)^{2s+} \|\Phi\|_{H^s}.
\end{equation}
Since for times $t\in [0,1]$, we get the bound of Theorem \ref{Theorem 1} just by iterating the local well-posedness construction, the claim for these times follows immediately. Combining this observation, (\ref{eq:QB1}), recalling the approximation result, and using time-reversibility, we obtain that for all $s \geq 1$, there exists $C=C(s,Energy,Mass)$ such that for all $t \in \mathbb{R}$:
\begin{equation}
\label{eq:QuinticBound}
\|u(t)\|_{H^s}\leq C(1+|t|)^{2s+}\|u(0)\|_{H^s}.
\end{equation}
Moreover, $C$ depends continuously on energy and mass.
This proves Theorem \ref{Theorem 1} when $k=2$. $\Box$

\vspace{3mm}

We now turn to the proof of Lemma \ref{Bigstar1}.

\begin{proof}
Let us consider WLOG the case when $t_0=0$. The general case follows by time translation and by the fact that all of our implied constants are independent of time.
The idea is to localize the factors of $v$ into dyadic annuli in frequency dual to $x$, i.e. to perform the
Littlewood-Paley decomposition. Namely, for each $j$ such that $n_j \neq 0$, we find a dyadic integer $N_j$ such that $|n_j|\sim N_j$. If $n_j=0$, we take the corresponding $N_j$ to be equal to $1$.

\vspace{3mm}

We let $v_{N_j}$ denote the function obtained from $v$ by localizing in frequency
to the dyadic annulus $|n| \sim N_j$. Let $|n_{a}|,|n_{b}|$ denote the largest two elements of the set $\{|n_1|,|n_2|,|n_3|,|n_4|,|n_5|,|n_6|\}.$

\vspace{3mm}

In our analysis of (\ref{eq:2zvijezde}), we have to consider two Big Cases:

\vspace{3mm}

\textbf{$\diamondsuit$Big Case 1:} In the expression for $M_6$, $(\theta(n_a))^2$ and $(\theta(n_b))^2$ appear with the opposite sign.

\vspace{2mm}

\textbf{$\diamondsuit$Big Case 2:} In the expression for $M_6$, $(\theta(n_a))^2$ and $(\theta(n_b))^2$ appear with the same sign.

\vspace{2mm}

As we will see, the ways in which we bound the contributions to $(\ref{eq:2zvijezde})$ coming from the two Big Cases are quite different.

\vspace{2mm}

Let $I^{(1)}$ denote the contribution coming to $I$ (as defined in (\ref{eq:2zvijezde})) from Big Case 1, and let $I^{(2)}$ denote the contribution coming from Big Case 2.

\vspace{4mm}

\textbf{Big Case 1:} We can assume WLOG that $|n_a|=|n_1|$, and $|n_b|=|n_2|$. In the proof of Big Case 1, we will see that the order of the other four frequencies in absolute value doesn't matter. Namely, the order of the four lower frequencies won't affect any of the multiplier bounds (which depend only on $|n_1|$ and $|n_2|$), and the estimates that we will use on the factors of $v$ corresponding to these four frequencies will not depend on complex conjugates.
Hence, it suffices to consider WLOG the case when:

\begin{equation}
\label{eq:localizationn}
|n_1| \geq |n_2| \geq |n_3| \geq |n_4| \geq |n_5| \geq |n_6|.
\end{equation}

We observe that, in this contribution, the $N_j$ satisfy:

\begin{equation}
\label{eq:localizationN1}
N_1 \gtrsim N_2 \gtrsim N_3 \gtrsim N_4 \gtrsim N_5 \gtrsim N_6.
\end{equation}

By definition of $\theta$, we observe that $$M_6(n_1,n_2,n_3,n_4,n_5,n_6)=0\,\,\mbox{if}\,\,|n_1|,|n_2|,|n_3|,|n_4|,|n_5|,|n_6|\leq N.$$

Hence, by construction of $|n_1|$, one has $|n_1|> N$ so we obtain the additional localization:

\begin{equation}
\label{eq:localizationN2}
N_1 \gtrsim N.
\end{equation}

Finally, since $n_1-n_2+n_3-n_4+n_5-n_6=0$, (\ref{eq:localizationn}) and the triangle inequality imply that $|n_1| \sim |n_2|$.

From this fact, we can deduce the localization:

\begin{equation}
\label{eq:localizationN3}
N_1 \sim N_2.
\end{equation}

The expression we wish to estimate is:

$$I_{N_1,N_2,N_3,N_4,N_5,N_6}:=$$
$$\sum_{n_1-n_2+n_3-n_4+n_5-n_6=0;|n_1|\geq \cdots \geq |n_6|}
\int_0^{\delta} M_6 \widehat{v_{N_1}}(n_1)\overline{\widehat{v_{N_2}}(n_2)}\widehat{v_{N_3}}(n_3)\overline{\widehat{v_{N_4}}(n_4)}
\widehat{v_{N_5}}(n_5)\overline{\widehat{v_{N_6}}(n_6)}dt.$$

\vspace{3mm}

Let $\widetilde{I}$ denote the contribution to $I$, as defined in (\ref{eq:2zvijezde}), coming from (\ref{eq:localizationn}). Then $\widetilde{I}$ satisfies:

$$|\widetilde{I}| \lesssim \sum_{N_j \,\mbox{satisfying}\, (\ref{eq:localizationN1}),(\ref{eq:localizationN2}),(\ref{eq:localizationN3})}
|I_{N_1,N_2,N_3,N_4,N_5,N_6}|.$$

\vspace{3mm}

Within Big Case 1, we consider two cases:

\vspace{3mm}

\textbf{$\diamond$Case 1:}$N_3,N_4,N_5,N_6\ll N_1^{\frac{1}{2}}.$

\vspace{2mm}

\textbf{$\diamond$Case 2:}$N_3 \gtrsim N_1^{\frac{1}{2}}.$

\vspace{3mm}

\textbf{Case 1:}

The key step in this case is the following bound on $M_6$, which comes from cancelation.

\begin{equation}
\label{eq:Bound on M_6}
M_6 = O(N_1^{-\frac{1}{2}}\theta(N_1)\theta(N_2)).
\end{equation}
Before we prove (\ref{eq:Bound on M_6}), let us see how it gives us a good bound. Assuming (\ref{eq:Bound on M_6}) for the moment, we observe that:

$$|I_{N_1,N_2,N_3,N_4,N_5,N_6}|=$$

\vspace{2mm}

$$=\Big|\sum_{n_1-n_2+n_3-n_4+n_5-n_6=0;|n_1|\geq \cdots \geq |n_6|}
\int_{\mathbb{R}} M_6 (\chi_{[0,\delta]}v_{N_1})\,\widehat{\,}\,(n_1)\overline{\widehat{v_{N_2}}(n_2)}\widehat{v_{N_3}}(n_3)\overline{\widehat{v_{N_4}}(n_4)}
\widehat{v_{N_5}}(n_5)\overline{\widehat{v_{N_6}}(n_6)}dt\Big|=$$

\vspace{2mm}

$$=\Big|\sum_{n_1-n_2+n_3-n_4+n_5-n_6=0;|n_1|\geq \cdots \geq |n_6|}\int_{\tau_1-\tau_2+\tau_3-\tau_4+\tau_5-\tau_6=0}$$
$$M_6 (\chi_{[0,\delta]}v_{N_1})\,\widetilde{\,}\,(n_1,\tau_1)\overline{\widetilde{v_{N_2}}(n_2,\tau_2)}\widetilde{v_{N_3}}(n_3,\tau_3)
\overline{\widetilde{v_{N_4}}(n_4,\tau_4)}\widetilde{v_{N_5}}(n_5,\tau_5)\overline{\widetilde{v_{N_6}}(n_6,\tau_6)}d\tau_j\Big|\lesssim$$

\vspace{2mm}

$$\lesssim N_1^{-\frac{1}{2}}\theta(N_1)\theta(N_2)\sum_{n_1-n_2+n_3-n_4+n_5-n_6=0;|n_1|\geq \cdots \geq |n_6|}\int_{\tau_1-\tau_2+\tau_3-\tau_4+\tau_5-\tau_6=0}$$
$$\{|(\chi_{[0,\delta]}v_{N_1})\,\widetilde{\,}\,(n_1,\tau_1)||\widetilde{v_{N_2}}(n_2,\tau_2)||\widetilde{v_{N_3}}(n_3,\tau_3)|
|\widetilde{v_{N_4}}(n_4,\tau_4)||\widetilde{v_{N_5}}(n_5,\tau_5)||\widetilde{v_{N_6}}(n_6,\tau_6)|\}d\tau_j\leq$$

\vspace{2mm}

Since the integrand is non-negative, we can eliminate the restriction in the sum that $|n_1|\geq \cdots |n_6|$, so the expression is:

\vspace{2mm}

$$\leq N_1^{-\frac{1}{2}}\theta(N_1)\theta(N_2)\sum_{n_1-n_2+n_3-n_4+n_5-n_6=0}\int_{\tau_1-\tau_2+\tau_3-\tau_4+\tau_5-\tau_6=0}$$
$$\{|(\chi_{[0,\delta]}v_{N_1})\,\widetilde{\,}\,(n_1,\tau_1)||\widetilde{v_{N_2}}(n_2,\tau_2)||\widetilde{v_{N_3}}(n_3,\tau_3)|
|\widetilde{v_{N_4}}(n_4,\tau_4)||\widetilde{v_{N_5}}(n_5,\tau_5)||\widetilde{v_{N_6}}(n_6,\tau_6)|\}d\tau_j.
$$

Let us define:

\begin{equation}
\label{eq:eqF1}
F_1(x,t):=\sum_n \int_{\mathbb{R}} |(\chi_{[0,\delta]}v_{N_1})\,\widetilde{\,}\,(n_1,\tau_1)|e^{i(nx+t \tau)} d\tau.
\end{equation}

For $j=2,3,4,5,6$, we let:

\begin{equation}
\label{eq:eqFj}
F_j(x,t):=\sum_n \int_{\mathbb{R}} |\widetilde{v_{N_j}}(n,\tau)|e^{i(nx+t\tau)} d\tau.
\end{equation}

We now recall a fact from Fourier analysis. For simplicity, let us suppose that $f_1$,\ldots,$f_6$ are
functions on $\mathbb{R}$. Let us suppose that all $\widehat{f_j}$ are real-valued.

\vspace{2mm}

Then one has:

$$\int f_1 \overline{f_2} f_3 \overline{f_4} f_5 \overline{f_6} dx=$$
\begin{equation}
\label{eq:realfourier}
=\int_{\xi_1-\xi_2+\xi_3-\xi_4+\xi_5-\xi_6=0}\widehat{f_1}(\xi_1)\widehat{f_2}(\xi_2)\widehat{f_3}(\xi_3)
\widehat{f_4}(\xi_4)\widehat{f_5}(\xi_5)\widehat{f_6}(\xi_6)d \xi_j.
\end{equation}

\vspace{2mm}

Using the analogue of (\ref{eq:realfourier}) for the spacetime Fourier transform on $S^1 \times \mathbb{R}$, together with (\ref{eq:eqF1})
and (\ref{eq:eqFj}), and the previous bound we obtained on $|I_{N_1,N_2,N_3,N_4,N_5,N_6}|$, we deduce that:

$$|I_{N_1,N_2,N_3,N_4,N_5,N_6}|\lesssim N_1^{-\frac{1}{2}}\theta(N_1)\theta(N_2) \int_{\mathbb{R}}\int_{S^1}
F_1 \overline{F_2} F_3 \overline{F_4} F_5 \overline{F_6} dxdt=$$
$$=N_1^{-\frac{1}{2}}\theta(N_1)\theta(N_2) \Big|\int_{\mathbb{R}}\int_{S^1}
F_1 \overline{F_2} F_3 \overline{F_4} F_5 \overline{F_6} dxdt \Big|\leq$$
\vspace{1mm}
Which by H\"{o}lder's inequality is:
\vspace{1mm}
$$\leq N_1^{-\frac{1}{2}} \theta(N_1)\theta(N_2) \|F_1\|_{L^4_{t,x}}\|\overline{F_2}\|_{L^4_{t,x}}\|F_3\|_{L^4_{t,x}}\|\overline{F_4}\|_{L^4_{t,x}}\|F_5\|_{L^{\infty}_{t,x}}
\|\overline{F_6}\|_{L^{\infty}_{t,x}}=$$
\vspace{1mm}
$$=N_1^{-\frac{1}{2}} \theta(N_1)\theta(N_2) \|F_1\|_{L^4_{t,x}}\|F_2\|_{L^4_{t,x}}\|F_3\|_{L^4_{t,x}}\|F_4\|_{L^4_{t,x}}\|F_5\|_{L^{\infty}_{t,x}}
\|F_6\|_{L^{\infty}_{t,x}}$$
\vspace{2mm}
By using (\ref{eq:strichartztorus}) and (\ref{eq:Linftytx1}), this is:
\vspace{1mm}
$$\lesssim N_1^{-\frac{1}{2}}\theta(N_1)\theta(N_2)\|F_1\|_{X^{0,\frac{3}{8}}}\|F_2\|_{X^{0,\frac{3}{8}}}\|F_3\|_{X^{0,\frac{3}{8}}}\|F_4\|_{X^{0,\frac{3}{8}}}
\|F_5\|_{X^{\frac{1}{2}+,\frac{1}{2}+}}\|F_6\|_{X^{\frac{1}{2}+,\frac{1}{2}+}}=$$
\vspace{1mm}
$$=N_1^{-\frac{1}{2}}\theta(N_1)\theta(N_2)\|\chi_{[0,\delta]}v_{N_1}\|_{X^{0,\frac{3}{8}}}\|v_{N_2}\|_{X^{0,\frac{3}{8}}}\|v_{N_3}\|_{X^{0,\frac{3}{8}}}
\|v_{N_4}\|_{X^{0,\frac{3}{8}}}\|v_{N_5}\|_{X^{\frac{1}{2}+,\frac{1}{2}+}}\|v_{N_6}\|_{X^{\frac{1}{2}+,\frac{1}{2}+}}$$
\vspace{2mm}
By using (\ref{eq:key localization}) to bound the first factor, this expression is:
\vspace{1mm}
$$\lesssim N_1^{-\frac{1}{2}}\theta(N_1)\theta(N_2)\|v_{N_1}\|_{X^{0,\frac{3}{8}+}}\|v_{N_2}\|_{X^{0,\frac{3}{8}}}\|v_{N_3}\|_{X^{0,\frac{3}{8}}}
\|v_{N_4}\|_{X^{0,\frac{3}{8}}}\|v_{N_5}\|_{X^{\frac{1}{2}+,\frac{1}{2}+}}\|v_{N_6}\|_{X^{\frac{1}{2}+,\frac{1}{2}+}}\leq$$
\vspace{1mm}
$$\leq N_1^{-\frac{1}{2}}\theta(N_1)\theta(N_2)\|v_{N_1}\|_{X^{0,\frac{1}{2}+}}\|v_{N_2}\|_{X^{0,\frac{1}{2}+}}\|v_{N_3}\|_{X^{1,\frac{1}{2}+}}
\|v_{N_4}\|_{X^{1,\frac{1}{2}+}}\|v_{N_5}\|_{X^{1,\frac{1}{2}+}}\|v_{N_6}\|_{X^{1,\frac{1}{2}+}}\lesssim$$
\vspace{1mm}
$$\lesssim N_1^{-\frac{1}{2}}\|\mathcal{D}v_{N_1}\|_{X^{0,\frac{1}{2}+}}\|\mathcal{D}v_{N_2}\|_{X^{0,\frac{1}{2}+}}\|v\|_{X^{1,\frac{1}{2}+}}^4\leq N_1^{-\frac{1}{2}}\|\mathcal{D}v\|_{X^{0,\frac{1}{2}+}}^2 \|v\|_{X^{1,\frac{1}{2}+}}^4\lesssim$$
\vspace{1mm}
\begin{equation}
\label{eq:increment bound1}
\lesssim N_1^{-\frac{1}{2}} \|\mathcal{D}\Phi\|_{L^2}^2 \|\Phi\|_{H^1}^4 \lesssim N_1^{-\frac{1}{2}} \|\mathcal{D}\Phi\|_{L^2}^2.
\end{equation}

\vspace{2mm}

In the last two inequalities, we used Proposition 3.1., followed by the uniform bound on the $H^1$ norm of the solution to our equation given by the conservation of energy and mass.

\vspace{3mm}

This is the bound that we can obtain from (\ref{eq:Bound on M_6}). We now prove (\ref{eq:Bound on M_6}).

\vspace{2mm}

We must consider three possible subcases:

\vspace{2mm}

\textbf{Subcase 1:} $|n_2|<N.$

\vspace{1mm}

\textbf{Subcase 2:} $|n_2| \geq N \,\,\mbox{and}\,\,|n_3|<N.$

\vspace{1mm}

\textbf{Subcase 3:} $|n_3| \geq N.$

\vspace{2mm}

\textbf{Subcase 1:}

Here, we have:

$$N_1 \gtrsim N, N_2 \sim |n_2| <N, N_1 \sim N_2.$$

So, one obtains:

$$N_1 \sim N_2 \sim N.$$

Also, we know:

$$n_1-n_2+n_3-n_4+n_5-n_6=0,\,\,\mbox{and}\,\,n_3,n_4,n_5,n_6=O(N_1^{\frac{1}{2}})=O(N^{\frac{1}{2}}).$$

Consequently:

$$|n_1|=N+r_1,|n_2|=N-r_2,\,\,\mbox{where}\,\,r_1,r_2>0,\,\mbox{and}\,\,r_1,r_2=O(N^{\frac{1}{2}}).$$

$$\Rightarrow (\theta(n_1))^2-(\theta(n_2))^2+(\theta(n_3))^2-(\theta(n_4))^2+(\theta(n_5))^2-(\theta(n_6))^2=$$
$$=\frac{|n_1|^{2s}}{N^{2s}}-1+1-1+1-1=\frac{|n_1|^{2s}-N^{2s}}{N^{2s}}=\frac{(N+O(N^{\frac{1}{2}}))^{2s}-N^{2s}}{N^{2s}}=$$
$$=O\Big(\frac{N^{2s-\frac{1}{2}}}{N^{2s}}\Big)=O(N^{-\frac{1}{2}})=O(N_1^{-\frac{1}{2}})=O(N_1^{-\frac{1}{2}}\theta(N_1)\theta(N_2)).$$
\vspace{1mm}
In the last inequality, we used the fact that $\theta(N_1),\theta(N_2) \geq 1.$

\vspace{2mm}

\textbf{Subcase 2:}

\vspace{1mm}

Here: $n_2=n_1+(n_3-n_4+n_5-n_6)$, from where it follows that:

$$n_2=n_1+O(|n_1|^{\frac{1}{2}})$$

We observe:

$$(\theta(n_3))^2-(\theta(n_4))^2+(\theta(n_5))^2-(\theta(n_6))^2=1-1+1-1=0.$$

So:

$$M_6=(\theta(n_1))^2-(\theta(n_2))^2=\frac{|n_1|^{2s}}{N^{2s}}-\frac{|n_2|^{2s}}{N^{2s}}=
\frac{|n_1|^{2s}-|n_1+O(|n_1|^{\frac{1}{2}})|^{2s}}{N^{2s}}=$$
$$=O \Big(\frac{|n_1|^{2s-\frac{1}{2}}}{N^{2s}}\Big)=O\Big(\frac{N_1^{2s-\frac{1}{2}}}{N^{2s}}\Big)=
O\Big(N_1^{-\frac{1}{2}}\theta(N_1)\theta(N_2)\Big).$$

\vspace{2mm}

Here, we used the fact that: $\theta(N_1),\theta(N_2) \sim \frac{N_1^s}{N^s}.$

\vspace{2mm}

\textbf{Subcase 3:}

\vspace{1mm}

In this subcase, we can no longer use the cancelation coming from $$(\theta(n_3))^2-(\theta(n_4))^2+(\theta(n_5))^2-(\theta(n_6))^2.$$

The way one gets around this problem is as follows:

\vspace{1mm}

We first note that:

\vspace{1mm}

$(\theta(n_1))^2-(\theta(n_2))^2=O\Big(\frac{|n_1|^{2s-\frac{1}{2}}}{N^{2s}}\Big)=O\Big(N_1^{-\frac{1}{2}}\theta(N_1)\theta(N_2)\Big),\,$as before.

\vspace{1mm}

Also $|n_3|=O(|n_1|^{\frac{1}{2}})$, so:

$$(\theta(n_3))^2=O \Big(\frac{|n_3|^{2s}}{N^{2s}}\Big)=O \Big(\frac{|n_1|^{s}}{N^{2s}}\Big)=O \Big(\frac{|n_1|^{2s-\frac{1}{2}}}{N^{2s}}\Big).$$

Hence, by monotonicity properties of $\theta$, we deduce:

$$(\theta(n_3))^2,(\theta(n_4))^2,(\theta(n_5))^2,(\theta(n_6))^2=O \Big(\frac{|n_1|^{2s-\frac{1}{2}}}{N^{2s}}\Big).$$

Combining the previous estimates, we obtain:

$$M_6=O \Big(\frac{|n_1|^{2s-\frac{1}{2}}}{N^{2s}}\Big)=O\Big(N_1^{-\frac{1}{2}}\theta(N_1)\theta(N_2)\Big).$$

The estimate (\ref{eq:Bound on M_6}) now follows.

\vspace{3mm}

\textbf{Case 2:}

\vspace{1mm}

We recall that in this case, one has $N_3 \gtrsim N_1^{\frac{1}{2}}$. Here, we don't expect to get cancelation coming from $M_6$, so we just bound:

$$|M_6|=|(\theta(n_1))^2-(\theta(n_2))^2+(\theta(n_3))^2-(\theta(n_4))^2+(\theta(n_5))^2-(\theta(n_5))^2|$$
\begin{equation}
\label{eq:Bound on M_6 2}
\lesssim (\theta(n_1))^2 \lesssim (\theta(N_1))^2 \lesssim \theta(N_1)\theta(N_2).
\end{equation}

With notation as in Case 1, we use (\ref{eq:Bound on M_6 2}) and arguments analogous to those used to derive
(\ref{eq:increment bound1}) to deduce:

$$|I_{N_1,N_2,N_3,N_4,N_5,N_6}|\lesssim$$
\vspace{1mm}
$$\lesssim \theta(N_1) \theta(N_2)
\|v_{N_1}\|_{X^{0,\frac{1}{2}+}}\|v_{N_2}\|_{X^{0,\frac{1}{2}+}}\|v_{N_3}\|_{X^{0,\frac{1}{2}+}}\|v_{N_4}\|_{X^{0,\frac{1}{2}+}}
\|v_{N_5}\|_{X^{\frac{1}{2}+,\frac{1}{2}+}}\|v_{N_6}\|_{X^{\frac{1}{2}+,\frac{1}{2}+}}\lesssim$$
$$\lesssim \|\mathcal{D}v_{N_1}\|_{X^{0,\frac{1}{2}+}}\|\mathcal{D}v_{N_2}\|_{X^{0,\frac{1}{2}+}}(\frac{1}{N_3}\|v_{N_3}\|_{X^{1,\frac{1}{2}+}})\|v_{N_4}\|_{X^{0,\frac{1}{2}+}}
\|v_{N_5}\|_{X^{\frac{1}{2}+,\frac{1}{2}+}}\|v_{N_6}\|_{X^{\frac{1}{2}+,\frac{1}{2}+}}\lesssim$$
\begin{equation}
\label{eq:increment bound2}
\lesssim N_1^{-\frac{1}{2}}\|\mathcal{D}v\|_{X^{0\frac{1}{2}+}}^2 \|v\|_{X^{1,\frac{1}{2}+}}^4 \lesssim N_1^{-\frac{1}{2}} \|\mathcal{D}\Phi\|_{L^2}^2.
\end{equation}
The last bound follows from Proposition \ref{Proposition 3.1}. We note that this is the same bound we obtained in (\ref{eq:increment bound1}). Combining (\ref{eq:increment bound1}) and (\ref{eq:increment bound2}), and recalling that $I^{(1)}$ denotes the contribution of $I$ from Big Case 1, it follows that:

$$|I^{(1)}|\lesssim \sum_{N_j \,\mbox{satisfying}\, (\ref{eq:localizationN1}),(\ref{eq:localizationN2}),(\ref{eq:localizationN3})}
 N_1^{-\frac{1}{2}} \|\mathcal{D}\Phi\|_{L^2}^2 \lesssim$$
\vspace{1mm}
$$\lesssim \sum_{N_j \,\mbox{satisfying}\, (\ref{eq:localizationN1}),(\ref{eq:localizationN2}),(\ref{eq:localizationN3})}
 N_1^{-\frac{1}{2}+}N_2^{-0+}N_3^{-0+}N_4^{-0+}N_5^{-0+}N_6^{-0+} \|\mathcal{D}\Phi\|_{L^2}^2\lesssim$$
\vspace{1mm}
\begin{equation}
\label{eq:Big Case 1}
\lesssim \frac{1}{N^{\frac{1}{2}-}}\|\mathcal{D}\Phi\|_{L^2}^2.
\end{equation}
By construction, the implied constant depends only on $(s,Energy,Mass)$, and is continuous in energy and mass.

\vspace{4mm}

\textbf{Big Case 2:}

\vspace{2mm}

We recall that in this Big Case, in the expression for $M_6$, $(\theta(n_a))^2$ and
$(\theta(n_b))^2$ appear with the same sign. Arguing as in Big Case 1, we observe that the order of the four lower frequencies doesn't matter. Let us reorder the variables so that the hyperplane over which we are summing becomes $n_1+n_2+n_3-n_4-n_5-n_6=0$. It suffices to consider the case when:

$$|n_1| \geq |n_2| \geq |n_3| \geq |n_4| \geq |n_5| \geq |n_6|.$$
The expression we want to bound is:

$$\sum_{n_1+n_2+n_3-n_4-n_5-n_6=0,|n_1| \geq |n_2| \geq |n_3| \geq |n_3| \geq |n_4| \geq |n_5| \geq |n_6|}$$
$$\{\int_0^{\delta} M'_6 \hat{v}(n_1) \hat{v}(n_2) \hat{v}(n_3)
\overline{\hat{v}(n_4)}\,\overline{\hat{v}(n_5)}\,\overline{\hat{v}(n_6)}dt\}.$$

\vspace{1mm}

Here, we are taking: $$M'_6(n_1,n_2,n_3,n_4,n_5,n_6):=
(\theta(n_1))^2+(\theta(n_2))^2+(\theta(n_3))^2-(\theta(n_4))^2-(\theta(n_5))^2-(\theta(n_6))^2.$$

As before, we dyadically localize the factors of $v$ in the Fourier domain.

In this Big Case, we want to estimate:

$$J_{N_1,N_2,N_3,N_4,N_5,N_6}:=\sum_{n_1+n_2+n_3-n_4-n_5-n_6=0,|n_1| \geq |n_2| \geq |n_3| \geq |n_3| \geq |n_4| \geq |n_5| \geq |n_6|}$$
$$\{\int_0^{\delta} M'_6 \widehat{v_{N_1}}(n_1) \widehat{v_{N_2}}(n_2) \widehat{v_{N_3}}(n_3)
\overline{\widehat{v_{N_4}}(n_4)}\,\overline{\widehat{v_{N_5}}(n_5)}\,\overline{\widehat{v_{N_6}}(n_6)}dt\}.$$

One has the additional localizations on the $N_j$'s:

\begin{equation}
\label{eq:localizationN21}
N_1 \gtrsim N_2 \gtrsim N_3 \gtrsim N_4 \gtrsim N_5 \gtrsim N_6.
\end{equation}

\begin{equation}
\label{eq:localizationN22}
N_1 \sim N_2.
\end{equation}

\begin{equation}
\label{eq:localizationN23}
N_1 \gtrsim N.
\end{equation}

In this Big Case, we don't necessarily obtain any cancelation in $M_6'$, so we just write:

\begin{equation}
\label{eq:Bound on M_6'}
|M_6'|\lesssim (\theta(n_1))^2 \lesssim (\theta(N_1))^2 \lesssim \theta(N_1)\theta(N_2).
\end{equation}

Let us now estimate $J_{N_1,N_2,N_3,N_4,N_5,N_6}.$

\vspace{3mm}

Our analysis of this contribution will use techniques similar to those used in \cite{BGT,Z}. As we will see,
when one can't deduce decay estimates just from looking at the Fourier transform in $x$, one can look at the
Fourier transform in $t$.

\vspace{2mm}

We consider two cases:

\vspace{3mm}

\textbf{$\diamond$ Case 1:}
\vspace{1mm}
$N_3,N_4,N_5,N_6 \ll N_1^{\frac{1}{2}}.$

We observe that:

$$J_{N_1,N_2,N_3,N_4,N_5,N_6}=\sum_{n_1+n_2+n_3-n_4-n_5-n_6=0,|n_1| \geq \cdots \geq |n_6|}$$
$$ \int_{\mathbb{R}}
\{M_6'(\chi_{[0,\delta]}\widehat{v_{N_1}}(n_1))\widehat{v_{N_2}}(n_2)\widehat{v_{N_3}}(n_3)
\overline{\widehat{v_{N_4}}(n_4)}\,\overline{\widehat{v_{N_5}}(n_5)}\,\overline{\widehat{v_{N_6}}(n_6)}\}dt=$$

\vspace{2mm}

$$=\sum_{n_1+n_2+n_3-n_4-n_5-n_6=0,|n_1| \geq \cdots \geq |n_6|}\int_{\tau_1+\tau_2+\tau_3-\tau_4-\tau_5-\tau_6=0}$$
\vspace{1mm}
$$M_6' (\chi_{[0,\delta]}v_{N_1})\,\widetilde{\,}\,(n_1,\tau_1)\widetilde{v_{N_2}}(n_2,\tau_2)\widetilde{v_{N_3}}(n_3,\tau_3)
\overline{\widetilde{v_{N_4}}(n_4,\tau_4)}\,\overline{\widetilde{v_{N_5}}(n_5,\tau_5)}\,\overline{\widetilde{v_{N_6}}(n_6,\tau_6)}d \tau_j.$$

\vspace{2mm}

Now, as in \cite{BGT,Z}, we localize in parabolic regions determined by $\langle \tau + n^2 \rangle$.

Namely, given a dyadic integer $L_1$, we let $(\chi_{[0,\delta]}v_{N_1})_{L_1}=(\chi_{[0,\delta]}v)_{N_1,L_1}$ denote
the function obtained from $\chi_{[0,\delta]}v_{N_1}=(\chi_{[0,\delta]}v)_{N_1}$ by restricting its spacetime Fourier transform to the region where $\langle \tau + n^2 \rangle \sim L_1$.
\vspace{1mm}

Likewise, for $j \geq 2$, and for $L_j$ a dyadic integer, we denote by $v_{N_j,L_j}$ the function obtained from $v_{N_j}$ by
localizing its spacetime Fourier transform to $\langle \tau + n^2 \rangle \sim L_j$.

So, now, we want to estimate:

$$J_{\bar{L},\bar{N}}:=$$
$$=\sum_{n_1+n_2+n_3-n_4-n_5-n_6=0,|n_1| \geq \cdots \geq |n_6|}\int_{\tau_1+\tau_2+\tau_3-\tau_4-\tau_5-\tau_6=0}$$
\vspace{1mm}
$$M_6'
\widetilde{(\chi_{[0,\delta]}v)}_{N_1,L_1}(n_1,\tau_1)\widetilde{v_{N_2,L_2}}(n_2,\tau_2)\widetilde{v_{N_3,L_3}}(n_3,\tau_3)
\overline{\widetilde{v_{N_4,L_4}}(n_4,\tau_4)}\,\overline{\widetilde{v_{N_5,L_5}}(n_5,\tau_5)}\,
\overline{\widetilde{v_{N_6,L_6}}(n_6,\tau_6)}d \tau_j.$$

\vspace{3mm}

We have to consider two subcases w.r.t. the $\tau_j$:

\vspace{2mm}
\textbf{Subcase 1:} $|\tau_3|,|\tau_4|,|\tau_5|,|\tau_6| \ll N_1^2.$

\vspace{1mm}
\textbf{Subcase 2:} $max\,\{|\tau_3|,|\tau_4|,|\tau_5|,|\tau_6|\}\gtrsim N_1^2.$

\vspace{2mm}
\textbf{Subcase 1:}

Let us denote by $J^1_{\bar{L},\bar{N}}$ the contribution to $J_{\bar{L},\bar{N}}$ coming from this subcase.

Take $$(n_1,\tau_1) \in supp \, \widetilde{(\chi_{[0,\delta]}v)}_{N_1,L_1},$$
and:

$$(n_2,\tau_2) \in supp \, \widetilde{v_{N_2,L_2}}.$$
keeping in mind the assumptions of the subcase.

We then obtain:

$$L_1+L_2 \gtrsim |\tau_1-n_1^2| + |\tau_2-n_2^2| \geq |\tau_1+\tau_2+n_1^2+n_2^2| \geq |n_1^2+n_2^2|-|\tau_1+\tau_2|=$$
$$=|n_1^2+n_2^2|-|\tau_3-\tau_4-\tau_5-\tau_6|\geq |n_1|^2-|\tau_3|-|\tau_4|-|\tau_5|-|\tau_6|\gtrsim N_1^2$$
In the last inequality, we used the fact that:

$$|n_1|\gtrsim N_1, |\tau_3|,|\tau_4|,|\tau_5|,|\tau_6| \ll N_1^2.$$
In the calculation, we observe the crucial role of the inequality:
$$|n_1^2+n_2^2|\geq|n_1|^2.$$
Since $L_1,L_2 \geq 1$, the previous calculation gives us that:

\begin{equation}
\label{eq:subcase1product}
L_1 L_2 \gtrsim N_1^2.
\end{equation}
We now note that:

$$|J^1_{\bar{L},\bar{N}}|\leq$$
\vspace{2mm}
$$\leq\sum_{n_1+n_2+n_3-n_4-n_5-n_6=0,|n_1| \geq \cdots \geq |n_6|}\int_{\tau_1+\tau_2+\tau_3-\tau_4-\tau_5-\tau_6=0;
|\tau_3|,|\tau_4|,|\tau_5|,|\tau_6|\ll N_1^2}$$
\vspace{1mm}
$$\{|M_6'| |\widetilde{(\chi_{[0,\delta]}v)}_{N_1,L_1}(n_1,\tau_1)||\widetilde{v_{N_2,L_2}}(n_2,\tau_2)||\widetilde{v_{N_3,L_3}}(n_3,\tau_3)|$$
\vspace{1mm}
$$|\overline{\widetilde{v_{N_4,L_4}}(n_4,\tau_4)}|\,|\overline{\widetilde{v_{N_5,L_5}}(n_5,\tau_5)}|\,
|\overline{\widetilde{v_{N_6,L_6}}(n_6,\tau_6)}|\}d \tau_j\leq$$
\vspace{1mm}
$$\leq\sum_{n_1+n_2+n_3-n_4-n_5-n_6=0}\int_{\tau_1+\tau_2+\tau_3-\tau_4-\tau_5-\tau_6=0}$$
\vspace{1mm}
$$\{|M_6'| |\widetilde{(\chi_{[0,\delta]}v)}_{N_1,L_1}(n_1,\tau_1)||\widetilde{v_{N_2,L_2}}(n_2,\tau_2)||\widetilde{v_{N_3,L_3}}(n_3,\tau_3)|$$
\vspace{1mm}
$$|\widetilde{v_{N_4,L_4}}(n_4,\tau_4)|\,|\widetilde{v_{N_5,L_5}}(n_5,\tau_5)|\,
|\widetilde{v_{N_6,L_6}}(n_6,\tau_6)|\}d\tau_j.$$

\vspace{3mm}

Similarly as in Big Case 1, let us define:

$$G_1(x,t):=\sum_n \int e^{inx+it\tau} |\widetilde{(\chi_{[0,\delta]}v)}_{N_1,L_1}(n,\tau)| d\tau.$$
For $j=2,\ldots,6$, we let:

$$G_j(x,t):=\sum_n \int e^{inx+it\tau} |\widetilde{v_{N_j,L_j}}(n,\tau)| d\tau.$$
Arguing as in Big Case 1, using H\"{o}lder's inequality and (\ref{eq:Bound on M_6'}), we get \footnote{Strictly speaking, we should be truncating $G_3,G_4,G_5$, and $G_6$ to $|\tau|\ll N_1^2$, but we ignore this for simplicity of notation since we will later reduce to estimating these factors in $X^{s,b}$ norms, which don't increase if we localize the spacetime Fourier transform.}:

$$|J^1_{\bar{L},\bar{N}}|\lesssim \theta(N_1)\theta(N_2) \|G_1\|_{L^4_tL^2_x} \|G_2\|_{L^4_tL^2_x}
\|G_3\|_{L^4_tL^{\infty}_x}\|G_4\|_{L^4_tL^{\infty}_x} \|G_5\|_{L^{\infty}_{t,x}} \|G_6\|_{L^{\infty}_{t,x}}$$
which is by Sobolev embedding:

$$\lesssim \theta(N_1)\theta(N_2) \|G_1\|_{L^4_tL^2_x} \|G_2\|_{L^4_tL^2_x}
\|G_3\|_{L^4_tH^{\frac{1}{2}+}_x}\|G_4\|_{L^4_tH^{\frac{1}{2}+}_x} \|G_5\|_{X^{\frac{1}{2}+,\frac{1}{2}+}} \|G_6\|_{X^{\frac{1}{2}+,\frac{1}{2}+}}
$$
\vspace{1mm}
Since $supp\,\, \widehat{G_3} \subseteq \{-cN_3,\ldots,cN_3\},supp\,\, \widehat{G_4} \subseteq \{-cN_4,\ldots,cN_4\}$, this expression is:
\vspace{1mm}
$$\lesssim \theta(N_1)\theta(N_2) \|G_1\|_{L^4_tL^2_x} \|G_2\|_{L^4_tL^2_x}
(N_3^{\frac{1}{2}+}\|G_3\|_{L^4_tL^2_x})(N_4^{\frac{1}{2}+}\|G_4\|_{L^4_tL^2_x}) \|G_5\|_{X^{\frac{1}{2}+,\frac{1}{2}+}} \|G_6\|_{X^{\frac{1}{2}+,\frac{1}{2}+}} $$
\vspace{3mm}
which is furthermore by using (\ref{eq:L4tL2x}):
$$\lesssim \theta(N_1)\theta(N_2)N_3^{\frac{1}{2}+}N_4^{\frac{1}{2}+} \|G_1\|_{X^{0,\frac{1}{4}+}} \|G_2\|_{X^{0,\frac{1}{4}+}}
\|G_3\|_{X^{0,\frac{1}{4}+}}\|G_4\|_{X^{0,\frac{1}{4}+}} \|G_5\|_{X^{\frac{1}{2}+,\frac{1}{2}+}} \|G_6\|_{X^{\frac{1}{2}+,\frac{1}{2}+}} =$$
\vspace{1mm}
$$=\theta(N_1)\theta(N_2)N_3^{\frac{1}{2}+}N_4^{\frac{1}{2}+}\|(\chi_{[0,\delta]}v)_{N_1,L_1}\|_{X^{0,\frac{1}{4}+}} \|v_{N_2,L_2}\|_{X^{0,\frac{1}{4}+}}$$
\vspace{1mm}
$$\|v_{N_3,L_3}\|_{X^{0,\frac{1}{4}+}}\|v_{N_4,L_4}\|_{X^{0,\frac{1}{4}+}} \|v_{N_5,L_5}\|_{X^{\frac{1}{2}+,\frac{1}{2}+}} \|v_{N_6,L_6}\|_{X^{\frac{1}{2}+,\frac{1}{2}+}}\lesssim$$
\vspace{1mm}
$$\lesssim N_3^{\frac{1}{2}+}N_4^{\frac{1}{2}+}\|(\mathcal{D}(\chi_{[0,\delta]}v))_{N_1,L_1}\|_{X^{0,\frac{1}{4}+}} \|(\mathcal{D}v)_{N_2,L_2}\|_{X^{0,\frac{1}{4}+}}$$
\vspace{1mm}
$$\|v_{N_3,L_3}\|_{X^{0,\frac{1}{4}+}}\|v_{N_4,L_4}\|_{X^{0,\frac{1}{4}+}} \|v_{N_5,L_5}\|_{X^{\frac{1}{2}+,\frac{1}{2}+}} \|v_{N_6,L_6}\|_{X^{\frac{1}{2}+,\frac{1}{2}+}}$$
\vspace{1mm}
$$\lesssim N_3^{\frac{1}{2}+}N_4^{\frac{1}{2}+}\frac{1}{L_1^{\frac{1}{4}-}}\|(\mathcal{D}(\chi_{[0,\delta]}v))_{N_1,L_1}\|_{X^{0,\frac{1}{2}-}} \frac{1}{L_2^{\frac{1}{4}}}\|(\mathcal{D}v)_{N_2,L_2}\|_{X^{0,\frac{1}{2}+}}$$
\vspace{1mm}
$$\frac{1}{N_3 L_3^{\frac{1}{4}}}\|v_{N_3,L_3}\|_{X^{1,\frac{1}{2}+}}\frac{1}{N_4 L_4^{\frac{1}{4}}}\|v_{N_4,L_4}\|_{X^{1,\frac{1}{2}+}} \frac{1}
{N_5^{\frac{1}{2}-} L_5^{0+}}\|v_{N_5,L_5}\|_{X^{1,\frac{1}{2}+}}\frac{1}
{N_6^{\frac{1}{2}-} L_6^{0+}} \|v_{N_6,L_6}\|_{X^{1,\frac{1}{2}+}}$$
\vspace{1mm}
By (\ref{eq:Xsb localization}) and the definition of the localizations w.r.t. $N_j,L_j$, this quantity is:

\vspace{1mm}
$$\lesssim N_3^{\frac{1}{2}+}N_4^{\frac{1}{2}+}\frac{1}{L_1^{\frac{1}{4}-}}\frac{1}{L_2^{\frac{1}{4}}}\frac{1}{N_3 L_3^{\frac{1}{4}}}
\frac{1}{N_4 L_4^{\frac{1}{4}}}\frac{1}{N_5^{\frac{1}{2}-} L_5^{0+}}\frac{1}{N_6^{\frac{1}{2}-} L_6^{0+}}$$
\vspace{1mm}
$$\|\mathcal{D}v\|_{X^{0,\frac{1}{2}+}}^2 \|v\|_{X^{1,\frac{1}{2}+}}^2 \lesssim$$
\vspace{1mm}
$$\lesssim \frac{1}{(L_1L_2)^{\frac{1}{4}-}}\frac{1}{N_3^{\frac{1}{2}-}N_4^{\frac{1}{2}-}N_5^{\frac{1}{2}-}N_6^{\frac{1}{2}-}}
\frac{1}{L_1^{0+}L_2^{0+}L_3^{\frac{1}{4}}L_3^{\frac{1}{4}}L_5^{0+}L_6^{0+}}\|\mathcal{D}v\|_{X^{0,\frac{1}{2}+}}^2
\|v\|_{X^{1,\frac{1}{2}+}}^4 $$
\vspace{1mm}
From (\ref{eq:subcase1product}) and Proposition \ref{Proposition 3.1}, this is:

\vspace{1mm}
$$\lesssim \frac{1}{N_1^{\frac{1}{2}-}}\frac{1}{N_3^{\frac{1}{2}-}N_4^{\frac{1}{2}-}N_5^{\frac{1}{2}-}N_6^{\frac{1}{2}-}}
\frac{1}{L_1^{0+}L_2^{0+}L_3^{\frac{1}{4}}L_3^{\frac{1}{4}}L_5^{0+}L_6^{0+}}\|\mathcal{D}\Phi\|_{L^2}^2 \|\Phi\|_{H^1}^4\lesssim$$
\vspace{1mm}
\begin{equation}
\label{eq:subcase1bound}
\lesssim \frac{1}{N_1^{\frac{1}{2}-}}\frac{1}{N_2^{0+}N_3^{\frac{1}{2}-}N_4^{\frac{1}{2}-}N_5^{\frac{1}{2}-}N_6^{\frac{1}{2}-}}
\frac{1}{L_1^{0+}L_2^{0+}L_3^{\frac{1}{4}}L_3^{\frac{1}{4}}L_5^{0+}L_6^{0+}}\|\mathcal{D}\Phi\|_{L^2}^2.
\end{equation}
\vspace{2mm}
In order to deduce the last bound, we used the fact that: $N_1\sim N_2$ and $\|\Phi\|_{H^1}\lesssim 1$.

\vspace{2mm}

\textbf{Subcase 2:}

\vspace{2mm}

We recall that in this subcase, one has: 
$$max \{|\tau_3|,|\tau_4|,|\tau_5|,|\tau_6|\} \gtrsim N_1^2.$$
Let us consider the case: $|\tau_3|=max \{|\tau_3|,|\tau_4|,|\tau_5|,|\tau_6|\}.$ We can analogously consider the other
cases, but we have to group the factors in H\"{o}lder's Inequality then \footnote{We take the $L^4_tL^2_x$ norm of the factor with highest $|\tau|$.}. Let us localize as in the previous subcase, and let us denote by $J^2_{\bar{L},\bar{N}}$ the contribution to $J_{\bar{L},\bar{N}}$ coming from this subcase.

\vspace{1mm}
Suppose now that $(\tau_3,n_3) \in supp\,\,\widetilde{v_{N_3,L_3}}$, keeping in mind the assumptions of the subcase. Then:

$$|n_3|\sim N_3\ll N_1^{\frac{1}{2}}, |\tau_3| \gtrsim N_1^2 \Rightarrow |\tau_3+n_3^2|\gtrsim N_1^2-N_1 \gtrsim N_1^2.$$
Consequently:

\begin{equation}
\label{eq:L3bound}
L_3\gtrsim N_1^2.
\end{equation}
Arguing analogously as in the previous subcase, we obtain:

$$|J^2_{\bar{L},\bar{N}}| \lesssim \theta(N_1)\theta(N_2) N_3^{\frac{1}{2}+}N_4^{\frac{1}{2}+}$$
\vspace{1mm}
$$\|(\chi_{[0,\delta]}v)_{N_1,L_1}\|_{X^{0,\frac{1}{4}+}} \|v_{N_2,L_2}\|_{X^{0,\frac{1}{4}+}}
\|v_{N_3,L_3}\|_{X^{0,\frac{1}{4}+}}\|v_{N_4,L_4}\|_{X^{0,\frac{1}{4}+}} \|v_{N_5,L_5}\|_{X^{\frac{1}{2}+,\frac{1}{2}+}}
\|v_{N_6,L_6}\|_{X^{\frac{1}{2}+,\frac{1}{2}+}}\lesssim$$
\vspace{1mm}
$$\lesssim N_3^{\frac{1}{2}+} N_4^{\frac{1}{2}+} \frac{1}{L_1^{\frac{1}{4}-}} \|\mathcal{D}v_{N_1,L_1}\|_{X^{0,\frac{1}{2}+}}
\frac{1}{L_2^{\frac{1}{4}}}\|\mathcal{D}v_{N_2,L_2}\|_{X^{0,\frac{1}{2}+}}$$
\vspace{1mm}
$$\frac{1}{N_3L_3^{\frac{1}{4}}} \|v_{N_3,L_3}\|_{X^{1,\frac{1}{2}+}}
\frac{1}{N_4L_4^{\frac{1}{4}}} \|v_{N_4,L_4}\|_{X^{1,\frac{1}{2}+}} \frac{1}{N_5^{\frac{1}{2}-}L_5^{0+}}
\|v_{N_5,L_5}\|_{X^{1,\frac{1}{2}+}}\frac{1}{N_6^{\frac{1}{2}-}L_6^{0+}}\|v_{N_6,L_6}\|_{X^{1,\frac{1}{2}+}}\lesssim$$
\vspace{1mm}
$$\lesssim \frac{1}{L_3^{\frac{1}{4}-}} \frac{1}{N_3^{\frac{1}{2}-}N_4^{\frac{1}{2}-}N_5^{0+}N_6^{0+}}
\frac{1}{L_1^{\frac{1}{4}-}L_2^{\frac{1}{4}} L_3^{0+} L_4^{\frac{1}{4}}L_5^{0+}L_6^{0+}}
\|\mathcal{D}v\|_{X^{0,\frac{1}{2}+}}^2 \|v\|_{X^{1,\frac{1}{2}+}}^4$$
\vspace{1mm}
which by (\ref{eq:L3bound}) is:

\vspace{1mm}
$$\lesssim \frac{1}{N_1^{\frac{1}{2}-}} \frac{1}{N_3^{\frac{1}{2}-}N_4^{\frac{1}{2}-}N_5^{0+}N_6^{0+}}
\frac{1}{L_1^{\frac{1}{4}-}L_2^{\frac{1}{4}} L_3^{0+} L_4^{\frac{1}{4}}L_5^{0+}L_6^{0+}}
\|\mathcal{D}v\|_{X^{0,\frac{1}{2}+}}^2 \|v\|_{X^{1,\frac{1}{2}+}}^4$$
\begin{equation}
\label{eq:subcase2bound}
\lesssim \frac{1}{N_1^{\frac{1}{2}-}} \frac{1}{N_2^{0+}N_3^{\frac{1}{2}-}N_4^{\frac{1}{2}-}N_5^{0+}N_6^{0+}}
\frac{1}{L_1^{\frac{1}{4}-}L_2^{\frac{1}{4}} L_3^{0+} L_4^{\frac{1}{4}}L_5^{0+}L_6^{0+}}
\|\mathcal{D}\Phi\|_{L^2}^2.
\end{equation}
\vspace{3mm}
\textbf{$\diamond$ Case 2:}
\vspace{1mm}
$N_3 \gtrsim N_1^{\frac{1}{2}}.$
Let us recall that we want to estimate:

$$J_{N_1,N_2,N_3,N_4,N_5,N_6}=$$
\vspace{1mm}
$$\sum_{n_1+n_2+n_3-n_4-n_5-n_6=0,|n_1| \geq \ldots \geq |n_6|}
\int_{\mathbb{R}} M_6' (\chi_{[0,\delta]}\widehat{v_{N_1}}(n_1)) \widehat{v_{N_2}}(n_2)
\widehat{v_{N_3}}(n_3) \overline{\widehat{v_{N_4}}(n_4)}\,\overline{\widehat{v_{N_5}}(n_5)}\,\overline{\widehat{v_{N_6}}(n_6)}dt. $$
\vspace{2mm}
Let us note that: $$|M_6'|=|(\theta(n_1))^2+(\theta(n_2))^2+(\theta(n_3))^2-(\theta(n_4))^2-(\theta(n_5))^2-(\theta(n_6))^2|\lesssim
\theta(N_1)\theta(N_2).$$
We note that this Case is analogous to Case 2 of Big Case 1. Hence, arguing exactly as we did in this Case, we obtain:

\begin{equation}
\label{eq:case2bound}
|J_{N_1,N_2,N_3,N_4,N_5,N_6}|\lesssim \frac{1}{N_1^{\frac{1}{2}-}(N_2 N_3 N_4 N_5 N_6)^{0+}} \|\mathcal{D} \Phi\|_{L^2}^2.
\end{equation}
We combine (\ref{eq:subcase1bound}),(\ref{eq:subcase2bound}),(\ref{eq:case2bound}) and sum in $N_j,L_j$ to deduce that the contribution to $I$ from Big Case 2, which we denoted by $I^{(2)}$ has the property that:

\begin{equation}
\label{eq:Big Case 2}
|I^{(2)}|\lesssim \frac{1}{N^{\frac{1}{2}-}}\|\mathcal{D}\Phi\|_{L^2}^2.
\end{equation}
This gives us a good bound in Big Case 2.
\vspace{4mm}
Combining (\ref{eq:Big Case 1}) and (\ref{eq:Big Case 2}), we finally obtain:

$$|I|=|\|\mathcal{D}u(\delta)\|_{L^2}^2-\|\mathcal{D}u(0)\|_{L^2}^2| \lesssim \frac{1}{N^{\frac{1}{2}-}}\|\Phi\|_{L^2}^2.$$
By construction, the implied constant here depends only on $(s,Energy,Mass)$. Let us denote it by $C=C(s,Energy,Mass)$.
We use Proposition \ref{Proposition 3.1} and the fact that the $H^1$ norm can be bounded by a continuous function of energy and mass to deduce that $C$ is continuous in energy and mass. Lemma \ref{Bigstar1} now follows.

\end{proof}

\subsection{Proof of Theorem \ref{Theorem 1} for $k \geq 3$.}

\vspace{3mm}

We finally note that for $k \geq 3$, we can bound the increment of $\|\mathcal{D}u(t)\|_{L^2}^2$ in an analogous way as we did for $k=2$. Namely, we observe that all the estimates on $M_6,M_6'$ we used depended only on the two highest frequencies and not on how many more frequencies there were.
Furthermore, in the later estimates, when we had to use H\"{o}lder's inequality, we just estimate the $k-2$ extra factors in $L^{\infty}_{t,x}$, and use the fact that $X^{\frac{1}{2}+,\frac{1}{2}+}\hookrightarrow L^{\infty}_{t,x}$. At the end, this only results in a ``0+ loss'' in the dyadic decay factor,
and we get the same increment bound (\ref{eq:Bigstar}) as before.

This finishes the proof of Theorem \ref{Theorem 1} for $k \geq 2$. $\Box$

\vspace{3mm}

\subsection{Remarks on the result of Bourgain.}

\vspace{3mm}

As was mentioned in Section 1.3., in the appendix of \cite{B4}, Bourgain gives a sketch of how one should be able to deduce a better bound in the case $k=2$ though. The methods he indicates there don't seem to apply to the higher nonlinearities $k > 2$. The problem lies in the fact that the inductive procedure from \cite{B4} is linked to the quintic structure of the nonlinearity.

Bourgain starts by defining the following Besov-type norms:

$$\|u\|_{0,p}:=(\int_{\mathbb{R}}(\sum_j |\widetilde{u}(j,j^2+\xi)|^2)^{\frac{p}{2}}d\xi)^{\frac{1}{p}}.$$
This space is similar to the $X^{s,b}$ space we are using, but $X^{s,b}$ spaces were not used in \cite{B4}.
The estimate one starts from is the following Strichartz Estimate:
\vspace{3mm}
Assuming that $supp\,\,\hat{\phi}\,\subseteq \{-N,\ldots,N\}$, one has:

\begin{equation}
\label{eq:L6estimate1}
\|S(t)\phi\|_{L^6_{t,x}}\lesssim N^{0+}\|\phi\|_{L^2_x}.
\end{equation}
Suppose now that $q=q(x,t)$ has the property that $supp\,\,\widehat{q(t)}\subseteq \{-N,\ldots,N\}$.
By writing $u$ as a superposition of modulated free solutions (c.f. Lemma 2.9 in \cite{Tao}), (\ref{eq:L6estimate1})
implies:

\begin{equation}
\label{eq:L6estimate2}
\|q\|_{L^6_{t,x}}\lesssim N^{0+}\|q\|_{0,1}.
\end{equation}
By using H\"{o}lder's inequality, one then deduces:

\begin{equation}
\label{eq:Induction Base}
\int_{\mathbb{R}}\int_{S^1} |q(x,t)|^6 dxdt \lesssim \|q\|_{L^6_{t,x}}^6 \lesssim N^{0+}\|q\|_{0,1}^6.
\end{equation}
The estimate $(\ref{eq:Induction Base})$ is used as the base of the induction in the paper. At each step, the Hamiltonian is modified using a symplectic transformation of the phase space $l^2(\mathbb{Z})$ in such a way that the nonlinearity is reduced to its essential part. In each iteration, it is shown inductively that the analogue of (\ref{eq:Induction Base}) holds for the modified Hamiltonian.

The reason why one doesn't seem to be able to apply these methods to the case $k > 2$ is that the Besov-type norms introduced earlier don't allow us to control the spacetime $L^{\infty}$ norm in a satisfactory way. On the other hand, we recall that for $X^{s,b}$ spaces, we used the bound: $\|u\|_{L^{\infty}_{t,x}}\lesssim \|u\|_{X^{\frac{1}{2}+,\frac{1}{2}+}}$.
It appears that the only estimate, one can use for the spacetime $L^{\infty}$ norm is obtained as follows:

Suppose $q=q(x,t)$ satisfies $supp\,\ \widehat{q(t)} \subseteq \{-N,\ldots,N\}$.

Then:

\begin{equation}
\label{eq:Besovbound}
\|q\|_{L^{\infty}_{t,x}}\lesssim \|q\|_{L^{\infty}_t H^{\frac{1}{2}+}_x}
\lesssim N^{\frac{1}{2}+}\|q\|_{L^{\infty}_tL^2_x}\lesssim N^{\frac{1}{2}+} \|q\|_{0,1}.
\end{equation}
Here, in the first step, we used Sobolev embedding and in the last step, we used the triangle inequality.

From H\"{o}lder's inequality, (\ref{eq:L6estimate2}), and (\ref{eq:Besovbound}) we can deduce that for $k \geq 3$, one has:

$$\int_{\mathbb{R}}\int_{S^1} |q(x,t)|^{2k+2} dxdt \lesssim \|q\|_{L^6_{t,x}}^6 \|q\|_{L^{\infty}_{t,x}}^{2k-4}.$$

\begin{equation}
\label{eq:2k+2bound}
\lesssim N^{0+} \|q\|_{0,1}^6 N^{\frac{2k-4}{2}+} \|q\|_{0,1}^{2k-4}\lesssim N^{(k-2)+}\|q\|_{0,1}^{2k+2}.
\end{equation}
\vspace{2mm}
We observe that this no longer gives us a $N^{0+}$ factor on the right hand side, which was crucial in the proof in \cite{B4}.

\section{Modifications of the Cubic NLS.}

\subsection{Modification 1: Hartree Equation.}

Let us now consider the Hartree equation on $S^1$, i.e. the equation $(\ref{eq:Hartree})$.
The equation (\ref{eq:Hartree}) has the following conserved quantities:

$$M(u(t))=\int |u(x,t)|^2 dx\,\,\,\mbox{\emph{(Mass)}}$$

and

$$E(u(t))=\frac{1}{2}\int |\nabla u(x,t)|^2 dx + \frac{1}{4}\int (V*|u|^2)(x,t) |u(x,t)|^2 dx \,\,\,\mbox{\emph{(Energy)}}$$

The fact that the mass is conserved follows from the fact that $V$ is real-valued.
The fact that the energy is conserved can be checked by using the equation and integrating by parts. The calculation crucially relies on the fact that $V$ is even, see \cite{Ca}.
Furthermore, since $V \geq 0$, we immediately obtain uniform bounds on $\|u(t)\|_{H^1}$.
$M$ is clearly continuous on $H^1$. By using Young's inequality, H\"{o}lder's inequality and Sobolev embedding, it follows that $E$ is also continuous on $H^1$.

\subsubsection{Local-in-time estimates for the Hartree Equation}

\vspace{3mm}

Let $u$ denote a global solution of $(\ref{eq:Hartree})$.
Recalling the definition of the operator $\mathcal{D}$ in (\ref{eq:D operator}), we have:

\begin{proposition}
\label{Proposition 4.1}
Given $t_0 \in \mathbb{R}$, there exists a globally defined function $v:S^1 \times \mathbb{R} \rightarrow \mathbb{C}$ satisfying the properties:

\begin{equation}
\label{eq:properties of v12}
v|_{[t_0,t_0+\delta]}=u|_{[t_0,t_0+\delta]}.
\end{equation}

\begin{equation}
\label{eq:properties of v22}
\|v\|_{X^{1,\frac{1}{2}+}}\leq C(s,E(\Phi),M(\Phi))
\end{equation}

\begin{equation}
\label{eq:properties of v32}
\|\mathcal{D}v\|_{X^{0,\frac{1}{2}+}}\leq C(s,E(\Phi),M(\Phi)) \|\mathcal{D}u(t_0)\|_{L^2}.
\end{equation}
Moreover, $\delta$ and $C$ can be chosen to depend continuously on the energy and mass.

\end{proposition}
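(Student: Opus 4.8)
The plan is to establish Proposition~\ref{Proposition 4.1} by imitating the proof of Proposition~\ref{Proposition 3.1} from Appendix~B, modified only where the Hartree nonlinearity $(V*|u|^2)u$ replaces $|u|^{4}u$. First I would record the a~priori bound: by the conservation laws of Section~4.1 together with the hypothesis $V\geq 0$, one has $\|u(t)\|_{H^1}\leq C_1(E(\Phi),M(\Phi))$ for all $t$, with $C_1$ a continuous function of $(E(\Phi),M(\Phi))$. Next I would prove the basic trilinear $X^{s,b}$ estimate for the Hartree nonlinearity: with $b=\tfrac12+$,
$$\big\|(V*(w_1\overline{w_2}))w_3\big\|_{X^{s,b-1}}\;\lesssim\;\|V\|_{L^1}\big(\|w_1\|_{X^{s,b}}\|w_2\|_{X^{1,b}}\|w_3\|_{X^{1,b}}+\text{(two analogous terms)}\big),$$
where in the two remaining terms the index $s$ sits on $w_2$ or $w_3$. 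This is proved exactly as the cubic $X^{s,b}$ estimate on $S^1$ (via the Strichartz bounds (\ref{eq:strichartztorus}), (\ref{eq:strichartztorus2}) and the embedding (\ref{eq:Linftytx1})); the convolution only helps, since $|\widehat{V*(w_1\overline{w_2})}(m)|\leq\|V\|_{L^1}\,|\widehat{w_1\overline{w_2}}(m)|$. Because the local existence time produced by this estimate is determined solely by $\|u(t_0)\|_{H^1}$, it may be taken to depend only on $(E(\Phi),M(\Phi))$, continuously; this choice fixes $\delta$.

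With these ingredients in hand, the extension $v$ is constructed by the usual time-truncation device. I would fix a smooth cutoff $\psi$ with $\psi\equiv 1$ on $[t_0,t_0+\delta]$ and $\operatorname{supp}\psi\subseteq[t_0-\delta,t_0+2\delta]$, and define $v$ as the globally defined solution of the truncated Duhamel equation
$$v(t)=\psi(t)\,S(t-t_0)u(t_0)-i\,\psi(t)\int_{t_0}^{t}S(t-t')\,\psi(t')\,(V*|v|^2)(t')\,v(t')\,dt',$$
obtained by contraction in $X^{1,\frac12+}$ using the $s=1$ case of the estimate above and the standard gain of a small power of $\delta$ from the time cutoff (this is where Lemma~\ref{Lemma 2.1} enters). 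Uniqueness for $(\ref{eq:Hartree})$ on $[t_0,t_0+\delta]$ forces $v|_{[t_0,t_0+\delta]}=u|_{[t_0,t_0+\delta]}$, giving (\ref{eq:properties of v12}), and the contraction estimate gives $\|v\|_{X^{1,\frac12+}}\lesssim\|u(t_0)\|_{H^1}\lesssim C_1(E(\Phi),M(\Phi))$, which is (\ref{eq:properties of v22}).

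For (\ref{eq:properties of v32}) I would apply the Fourier multiplier $\mathcal D$ to the truncated Duhamel formula. Since $\mathcal D$ acts only in $x$ and commutes both with $S(t)$ and with multiplication by functions of $t$, the linear term contributes $\lesssim\|\mathcal D u(t_0)\|_{L^2}$, so it suffices to prove the \emph{$\mathcal D$-trilinear estimate}
$$\big\|\mathcal D\big((V*|v|^2)v\big)\big\|_{X^{0,-\frac12+}}\;\lesssim\;\|V\|_{L^1}\,\|\mathcal D v\|_{X^{0,\frac12+}}\,\|v\|_{X^{1,\frac12+}}^2.$$
On the frequency relation $n=n_1-n_2+n_3$ underlying the cubic term, one of $|n_1|,|n_2|,|n_3|$ is $\gtrsim|n|$; since $\theta$ is monotone nondecreasing in $|n|$, one has $\theta(n)\lesssim_s\theta(n_j)$ for that index $j$, so $\mathcal D$ may be moved onto the corresponding factor of $v$, which is then measured in $X^{0,\frac12+}$, while the other two factors are measured in $X^{1,\frac12+}$ (one derivative is more than enough in the cubic estimate, by (\ref{eq:strichartztorus}) and (\ref{eq:Linftytx1})), and the convolution is absorbed into $\|V\|_{L^1}$. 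Feeding this back into the truncated Duhamel equation and using (\ref{eq:properties of v22}) together with the smallness of $\delta$ closes $\|\mathcal D v\|_{X^{0,\frac12+}}\lesssim\|\mathcal D u(t_0)\|_{L^2}$, i.e.\ (\ref{eq:properties of v32}); keeping track of the constants, all of which descend from $C_1$, yields the asserted continuous dependence of $\delta$ and $C$ on the energy and mass. The main obstacle is this last $\mathcal D$-trilinear estimate: one must verify that after placing $\mathcal D$ on the highest-frequency factor the two surviving factors genuinely cost only a single derivative, so that they are controlled by the a~priori $H^1$ bound; once that is arranged, everything else is the standard periodic $X^{s,b}$ local theory, now made slightly easier by the $\|V\|_{L^1}$ gain from the convolution.
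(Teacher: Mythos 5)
Your proposal is correct and follows essentially the same route as the paper's. The paper's proof of Proposition~\ref{Proposition 4.1} likewise reduces to Proposition~\ref{Proposition 3.1} via the single observation that $V\in L^1(S^1)$ gives $\widehat V\in L^\infty$, so that on the spacetime Fourier side the Hartree trilinear term is pointwise dominated by the cubic one, after which the contraction argument and the $\mathcal D$-estimate (using the fractional Leibniz property $\theta(m+n)\lesssim_s\theta(m)+\theta(n)$, which is equivalent to your ``put $\mathcal D$ on the highest-frequency factor'' step) go through verbatim. The only place the paper spends additional words is the uniqueness step, which it proves explicitly by a Gronwall argument in $L^2$ using Young/H\"older/Sobolev bounds on $(V*(u_1u_2))u_3$; you invoke uniqueness without proof, but that is a standard fact and not a real gap.
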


\begin{proof}

The proof of Proposition \ref{Proposition 4.1} is analogous to the Proof of Proposition \ref{Proposition 3.1} (see Appendix B). The only modification we have to make is to note that $V \in L^1(S^1)$ implies that $\widehat{V} \in L^{\infty}(S^1)$.
Instead of estimating an expression of the form $||v_{\delta}|^2 v_{\delta}|$ as in the proof of Proposition 3.1, we have to estimate: $|(V*|v_{\delta}|^2)v_{\delta}|.$

However,

$$|(V*|v_{\delta}|^2)v_{\delta}|=\big|\sum_{n_1+n_2+n_3=n} \int_{\tau_1+\tau_2+\tau_3=\tau} d\tau_j
\widehat{V}(n_1+n_2) \widetilde{v_{\delta}}(n_1,\tau_1)\widetilde{\bar{v}_{\delta}}(n_2,\tau_2)\widetilde{v_{\delta}}(n_3,\tau_3)\big| \leq $$
$$\leq \sum_{n_1+n_2+n_3=n} \int_{\tau_1+\tau_2+\tau_3=\tau} d\tau_j
|\widehat{V}(n_1+n_2)| |\widetilde{v_{\delta}}(n_1,\tau_1)||\widetilde{\bar{v}_{\delta}}(n_2,\tau_2)||\widetilde{v_{\delta}}(n_3,\tau_3)| \lesssim$$
$$\lesssim \sum_{n_1+n_2+n_3=n} \int_{\tau_1+\tau_2+\tau_3=\tau} d\tau_j
|\widetilde{v_{\delta}}(n_1,\tau_1)||\widetilde{\bar{v}_{\delta}}(n_2,\tau_2)||\widetilde{v_{\delta}}(n_3,\tau_3)|.$$

This is the same expression that we obtain in the proof of Proposition 3.1. The existence part (i.e. the analogue of properties (\ref{eq:properties of v2}) and (\ref{eq:properties of v3})) now follows in the same way as in the mentioned Proposition. On the other hand, for the uniqueness part (i.e. the analogue of (\ref{eq:properties of v1})), let $v(t),w(t)$ solve (\ref{eq:Hartree}) with the same initial data on the time interval $[0,\delta]$. We also suppose that $\|v(t)\|_{H^1},\|w(t)\|_{H^1}$ are uniformly bounded on this interval . By Minkowski's inequality, and by unitarity of the Linear Schr\"{o}dinger propagator, we obtain, for all $0\leq t \leq \delta$:

$$\|v(t)-w(t)\|_{L^2} \leq \int_0^t \|S(t-t')((V*|v|^2)v(t')-(V*|w|^2)w(t'))\|_{L^2} dt'=$$
$$=\int_0^t \|(V*|v|^2)v(t')-(V*|w|^2)w(t')\|_{L^2} dt'$$

If we combine H\"{o}lder's inequality, Young's inequality and Sobolev embedding, we deduce:

$$\|(V*(u_1 u_2))u_3\|_{L^2} \leq \|V\|_{L^1} \|u_1 u_2\|_{L^{\infty}} \|u_3\|_{L^2} \lesssim
\|u_1\|_{H^1} \|u_2\|_{H^1} \|u_3\|_{L^2}.$$

Similarly:

$$\|(V*(u_1 u_2))u_3\|_{L^2} \leq \|V\|_{L^1}\|u_1 u_2\|_{L^2} \|u_3\|_{L^{\infty}} \lesssim
\|u_1\|_{L^2}\|u_2\|_{H^1}\|u_3\|_{H^1}.$$

Hence:

$$\|v(t)-w(t)\|_{L^2} \lesssim \int_0^t (\|v(t')|_{H^1}+\|w(t')\|_{H^1})^2 \|v(t')-w(t')\|_{L^2} dt'
\lesssim \int_0^t \|v(t')-w(t')\|_{L^2} dt'.$$

Uniqueness now follows from Gronwall's inequality.
\end{proof}

We will now use the method of \emph{higher modified energies} as in \cite{CKSTT3,CKSTT5}. The key is to obtain a better approximation to $\|u(t)\|_{H^s}^2$ than $\|\mathcal{D}u(t)\|_{L^2}^2$ by using a multilinear correction term.

\subsubsection{Introduction of the Higher Modified Energy.}

Before we define the multilinear correction to $E^1(u):=\|\mathcal{D}u(t)\|_{L^2}^2$, let us first find $\frac{d}{dt}\|\mathcal{D}u(t)\|_{L^2}^2$.

$$\frac{d}{dt} \|\mathcal{D}u(t)\|_{L^2}^2 \sim \frac{d}{dt} \big( \sum_{n_1+n_2=0} \widehat{\mathcal{D}u}(n_1) \widehat{\mathcal{D}\bar{u}}(n_2) \big)=$$
\vspace{2mm}
$$=\sum_{n_1+n_2=0} (\theta(n_1)(i \Delta u - i (V*|u|^2)u)\, \widehat{}\, (n_1) \theta(n_2) \widehat{\bar{u}}(n_2)+
\theta(n_1)\widehat{u}(n_1)(-i \Delta \bar{u} + i(V*|u|^2)\bar{u})\, \widehat{} \,(n_2) \theta(n_2)=$$
\vspace{2mm}
$$=\sum_{n_1+n_2=0}\big(-i \big((\theta(n_1))^2 n_1^2 - (\theta(n_2))^2 n_2^2\big)\widehat{u}(n_1)\widehat{\bar{u}}(n_2)$$
$$-i\big((\theta(n_2))^2((V*|u|^2)u)\,\,\widehat{}\,\,(n_1)\widehat{\bar{u}}(n_2)-
(\theta(n_1))^2\widehat{u}(n_1)((V*|u|^2)\bar{u})\,\,\widehat{}\,\,(n_2)\big)\big)=$$
\vspace{2mm}
$$=-i\sum_{n_1+n_2+n_3+n_4=0} ((\theta(n_2))^2 \widehat{V}(n_3+n_4)\widehat{u}(n_1)\widehat{\bar{u}}(n_2)
\widehat{u}(n_3)\widehat{\bar{u}}(n_4)$$
$$-(\theta(n_1))^2\widehat{V}(n_3+n_4)\widehat{u}(n_1)\widehat{\bar{u}}(n_2)\widehat{u}(n_3)\widehat{\bar{u}}(n_4))=$$
\vspace{2mm}
$$=\frac{1}{2}i \sum_{n_1+n_2+n_3+n_4=0}((\theta(n_1))^2\widehat{V}(n_3+n_4)+(\theta(n_3))^2\widehat{V}(n_1+n_2)$$
$$-(\theta(n_2))^2\widehat{V}(n_3+n_4)-(\theta(n_4))^2\widehat{V}(n_1+n_2))\widehat{u}(n_1)
\widehat{\bar{u}}(n_2)\widehat{u}(n_3)\widehat{\bar{u}}(n_4)$$
Since $V$ is even, so is $\widehat{V}$. Hence, when $n_1+n_2+n_3+n_4=0$, we have that:
$\widehat{V}(n_1+n_2)=\widehat{V}(n_3+n_4)$. So, we deduce that:

$$\frac{d}{dt}\|\mathcal{D}u(t)\|_{L^2}^2=ci \sum_{n_1+n_2+n_3+n_4=0}\big((\theta(n_1))^2-(\theta(n_2))^2+(\theta(n_3))^2-(\theta(n_4))^2\big)$$
\begin{equation}
\label{eq:firstcontribution}
\widehat{V}(n_3+n_4)\widehat{u}(n_1)\widehat{\bar{u}}(n_2)\widehat{u}(n_3)\widehat{\bar{u}}(n_4),
\end{equation}
\vspace{2mm}
where $c$ is a real constant.

Recalling the notation from Section 2, we consider the following \emph{higher modified energy}

\begin{equation}
\label{eq:modifiedenergy}
E^2(u):=E^1(u) + \lambda_4(M_4;u).
\end{equation}
The quantity $M_4$ will be determined soon.

The modified energy $E^2$ comes as a ``multilinear correction'' of the modified energy $E^1$ considered earlier:

In order to find $\frac{d}{dt}E^2(u)$, we need to find $\frac{d}{dt}\lambda_4(M_4;u)$. Thus, if we fix a multiplier $M_4$, we obtain:

$$\frac{d}{dt} \lambda_4(M_4;u)=$$
\vspace{2mm}
$$\frac{d}{dt} \big( \sum_{n_1+n_2+n_3+n_4=0} M_4(n_1,n_2,n_3,n_4) \widehat{u}(n_1)
\widehat{\bar{u}}(n_2)\widehat{u}(n_3)\widehat{\bar{u}}(n_4) \big)=$$
\vspace{2mm}
$$=-i\lambda_4(M_4(n_1^2-n_2^2+n_3^2-n_4^2);u)$$
\vspace{2mm}
$$-i\sum_{n_1+n_2+n_3+n_4+n_5+n_6=0}\big[M_4(n_{123},n_4,n_5,n_6)\widehat{V}(n_1+n_2)$$
\vspace{2mm}
$$-M_4(n_1,n_{234},n_5,n_6)\widehat{V}(n_2+n_3)+M_4(n_1,n_2,n_{345},n_6)\widehat{V}(n_3+n_4)$$
\vspace{2mm}
\begin{equation}
\label{eq:secondcontribution}
-M_4(n_1,n_2,n_3,n_{456})\widehat{V}(n_4+n_5)\big]\widehat{u}(n_1)\widehat{\bar{u}}(n_2)\widehat{u}(n_3)\widehat{\bar{u}}(n_4)\widehat{u}(n_5)\widehat{\bar{u}}(n_6)
\end{equation}
From (\ref{eq:firstcontribution}), (\ref{eq:secondcontribution}), it follows that if we take:

\begin{equation}
\label{eq:definitionofM4}
M_4:= \Psi,
\end{equation}
where $\Psi$ is defined by:

$$\Psi: \Gamma_4 \rightarrow \mathbb{R}$$

\begin{equation}
\label{eq:definitionofpsi}
\Psi:=
\begin{cases}
  c\frac{((\theta(n_1))^2-(\theta(n_2))^2+(\theta(n_3))^2-(\theta(n_4))^2)\widehat{V}(n_3+n_4)}
  {n_1^2-n_2^2+n_3^2-n_4^2}  ,\,   \mbox{if }n_1^2-n_2^2+n_3^2-n_4^2 \neq 0 \\
  \,0,\,  \mbox{otherwise. }
\end{cases}
\end{equation}
for an appropriate real constant $c$.
\vspace{2mm}
One then has:

\begin{equation}
\label{eq:incremenentE2}
\frac{d}{dt}E^2(u)=-i \lambda_6(M_6;u).
\end{equation}

\vspace{2mm}

where:

\vspace{2mm}

$$M_6(n_1,n_2,n_3,n_4,n_5,n_6):=
M_4(n_{123},n_4,n_5,n_6)\widehat{V}(n_1+n_2)$$
\vspace{2mm}
$$-M_4(n_1,n_{234},n_5,n_6)\widehat{V}(n_2+n_3)+M_4(n_1,n_2,n_{345},n_6)\widehat{V}(n_3+n_4)$$
\vspace{2mm}
\begin{equation}
\label{eq:M_6_Hartree}
-M_4(n_1,n_2,n_3,n_{456})\widehat{V}(n_4+n_5)
\end{equation}

Heuristically, we expect this expression to be smaller than $\frac{d}{dt}E^1(u)$ since the derivatives are distributed over six factors of $u$ and $\bar{u}$, whereas before we only had four factors.
The key to continue our study of $E^2(u)$ is to deduce bounds on $\Psi$.

\subsubsection{Pointwise bounds on the multiplier $\Psi$.}

As in the previous section, we dyadically localize the frequencies as $|n_j| \sim N_j$.
We then order the $N_j$'s in decreasing order, to obtain
$N_1^* \geq N_2^* \geq N_3^* \geq N_4^*$.
Let us show that the following result holds:

\begin{lemma}
\label{Lemma 4.2}
Under the previous assumptions, one has:
\begin{equation}
\label{eq:psipointwisebound}
\Psi=O\big(\frac{1}{(N_1^*)^2}\theta(N_1^*)\theta(N_2^*)N_3^*N_4^*\big).
\end{equation}
\end{lemma}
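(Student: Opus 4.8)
The plan is to estimate $\Psi$ by carefully examining the two ingredients in its definition: the numerator $((\theta(n_1))^2-(\theta(n_2))^2+(\theta(n_3))^2-(\theta(n_4))^2)\widehat{V}(n_3+n_4)$ and the denominator $n_1^2-n_2^2+n_3^2-n_4^2$. First I would note that $\widehat{V}$ is uniformly bounded, since $V\in L^1(S^1)$ gives $\widehat{V}\in L^\infty$; this factor therefore contributes only an $O(1)$ constant and may be dropped from the analysis. So the task reduces to showing that the symmetrized theta-difference, divided by $n_1^2-n_2^2+n_3^2-n_4^2$, is $O\big((N_1^*)^{-2}\theta(N_1^*)\theta(N_2^*)N_3^*N_4^*\big)$ on the set where the denominator is nonzero (and $\Psi=0$ otherwise, so the bound is trivial there).

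The key structural fact I would use is that on $\Gamma_4$ (so $n_1+n_2+n_3+n_4=0$, with the alternating sign pattern matching $n_1-n_2+n_3-n_4=0$ after conjugation bookkeeping) the resonance function factors: $n_1^2-n_2^2+n_3^2-n_4^2 = -2(n_1+n_2)(n_1+n_4) = 2(n_{12})(n_{14})$ or an equivalent product of two pairwise sums. Hence $|n_1^2-n_2^2+n_3^2-n_4^2| = 2|n_1+n_2||n_2+n_3|$ (up to relabeling), and when this is nonzero it is $\gtrsim 1$. For the numerator, I would treat $(\theta(n_1))^2-(\theta(n_2))^2+(\theta(n_3))^2-(\theta(n_4))^2$ as a ``discrete double difference'' of the function $m\mapsto(\theta(m))^2$: grouping as $[(\theta(n_1))^2-(\theta(n_2))^2]+[(\theta(n_3))^2-(\theta(n_4))^2]$ and using $n_1+n_2 = -(n_3+n_4)$, one expects to gain \emph{two} powers of the smaller frequencies. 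Concretely, $\theta^2$ is (piecewise) a power $|m|^{2s}/N^{2s}$ which is smooth and convex-type away from $|m|=N$, so a first difference over a gap of size $|n_1+n_2|\lesssim N_3^*+N_4^*$ costs $|n_1+n_2|\cdot \frac{d}{dm}(\theta(m))^2 \sim |n_1+n_2|\frac{(N_1^*)^{2s-1}}{N^{2s}} = |n_1+n_2|\frac{\theta(N_1^*)^2}{N_1^*}$, and the second difference (the cancellation between the two bracketed pairs) costs another factor $|n_3+n_4|/N_1^* \sim N_4^*/N_1^*$ or so. Multiplying, the numerator is $O\big(N_3^* N_4^* \frac{\theta(N_1^*)^2}{(N_1^*)^2}\big)$, and since $\theta(N_1^*)\sim\theta(N_2^*)$ when $N_1^*\sim N_2^*$ — which holds here by the triangle inequality on $\Gamma_4$ exactly as in (\ref{eq:localizationN3}) — we get $O\big(N_3^*N_4^*\frac{\theta(N_1^*)\theta(N_2^*)}{(N_1^*)^2}\big)$. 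Dividing by the denominator, which is $\gtrsim 1$, gives the claim.

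The main obstacle I anticipate is handling the regime where one or more of the $n_j$ straddles the threshold $N$, i.e.\ where $\theta$ transitions between its constant piece and its power piece, so that the naive ``derivative of $\theta^2$'' heuristic fails because $\theta^2$ is only Lipschitz, not $C^2$, there. This has to be dealt with by a case analysis on the sizes of $N_1^*,\dots,N_4^*$ relative to $N$, mirroring the Subcase 1/2/3 breakdown used for the bound (\ref{eq:Bound on M_6}) on $M_6$ in Section 3: when $N_2^*<N$ all $\theta$'s involving the small frequencies equal $1$ and one extracts the gain directly from $(\theta(n_1))^2-(\theta(n_2))^2$ using $|n_1|\sim|n_2|$ and $|n_1|-|n_2| = O(N_3^*)$; when $N_2^*\geq N>N_3^*$ the lower two $\theta$'s are $1$ and the difference of the top two is estimated by the mean value theorem on $|m|^{2s}$; and when $N_3^*\geq N$ everything is in the power regime and one can crudely bound each $\theta(n_j)^2$ by $O\big((N_1^*)^{2s-1}N_3^*/N^{2s}\big)$-type expressions (using $N_3^*N_4^*\geq (N_3^*)^2$... actually $N_3^*\geq N_4^*$ so $N_3^*N_4^*$ is genuinely the relevant product) and still recover the stated bound. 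In each subcase the denominator lower bound $|n_1^2-n_2^2+n_3^2-n_4^2|\gtrsim 1$ is all that is needed from below, so no delicate quantitative resonance estimate is required — the whole content is the two-fold cancellation in the numerator, which I would organize exactly as above.
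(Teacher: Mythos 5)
Your plan contains a genuine gap: both the claimed numerator bound and the decision to use only $\gtrsim 1$ as a lower bound for the denominator fail in the regime $N_1^*\sim N_2^*\gg N_3^*\geq N_4^*$. Write $\xi := n_1+n_2$, $\eta := n_1+n_4$, so the denominator factors as $2\xi\eta$, and the numerator (with $g(m):=\theta(m)^2$ even) is the double difference $D_\eta D_\xi g(n_1)$. Away from the kink at $|m|=N$ the double mean value theorem gives at best $O(|\xi|\,|\eta|\,\theta(N_1^*)^2/(N_1^*)^2)$. Your claim that this is $O(N_3^*N_4^*\,\theta(N_1^*)^2/(N_1^*)^2)$ would require $|\xi|\,|\eta|\lesssim N_3^*N_4^*$; but when $n_1,n_2$ are the two large, nearly cancelling frequencies one has $|\xi|=|n_1+n_2|\lesssim N_3^*$ while $|\eta|=|n_1+n_4|\sim N_1^*\gg N_4^*$, so $|\xi|\,|\eta|\sim N_3^*N_1^*$, which is much larger. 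A concrete check: take $n_1=N+k$, $n_2=-N$, $n_3=-(k+j)$, $n_4=j$ with $1\leq j\ll k\ll N$. Then $\theta(n_3)=\theta(n_4)=1$ and the numerator equals $(N+k)^{2s}/N^{2s}-1\sim k/N$, whereas $N_3^*N_4^*\,\theta(N_1^*)^2/(N_1^*)^2\sim kj/N^2\ll k/N$. So the ``second'' cancellation you invoke is not there: the second increment in the double difference is over $\eta$, and $|\eta|/N_1^*\sim 1$, not $N_4^*/N_1^*$.

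Because the numerator only gains a single factor of $|n_3+n_4|=|\xi|$, discarding the denominator's actual size and keeping only ``$\gtrsim 1$'' cannot close the argument: with a single gain one would get $|\Psi|\lesssim |\xi|\,\theta(N_1^*)^2/N_1^*$, and turning that into $N_3^*N_4^*\,\theta(N_1^*)\theta(N_2^*)/(N_1^*)^2$ would need $N_1^*\lesssim N_4^*$, false. What actually carries the lemma is exactly the quantitative resonance estimate you say is not required: one lower-bounds the denominator by $2|\xi|\,|\eta|\sim N_1^*|n_3+n_4|$ in the $N_2^*\gg N_3^*$ regime, so that the factor of $|n_3+n_4|$ in the numerator (single mean value gain $\lesssim (N_1^*)^{2s-1}|n_3+n_4|/N^{2s}$) cancels against the same factor in the denominator, leaving $\theta(N_1^*)\theta(N_2^*)/(N_1^*)^2$. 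In the remaining regimes where $N_2^*\sim N_3^*$ there is no numerator cancellation; one instead lower-bounds the denominator by $|\eta|\gtrsim N_1^*$ (or by $1$ only when all four frequencies are comparable) and absorbs the rest into the allowed slack $N_3^*$ (resp.\ $N_3^*N_4^*$). So the factorization of the resonance function is used essentially, not merely to observe the denominator is nonzero, and the case split is driven by the mutual sizes of the $N_j^*$ rather than only by their sizes relative to $N$.
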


\begin{proof}
From the triangle inequality and from the definition of $\theta$, it follows that we need to consider only:

\begin{equation}
\label{eq:N_j^*localization}
N_1^* \sim N_2^* \gtrsim N.
\end{equation}

Furthermore, by construction of $\Psi$, we just need to prove the bound when $n_1^2-n_2^2+n_3^2-n_4^2 \neq 0$.

We recall that:

\begin{equation}
\label{eq:Vhatbound}
|\widehat{V}|\lesssim 1
\end{equation}
Hence, the factor of $\widehat{V}(n_3+n_4)$ will not affect the estimate.

\vspace{2mm}

In the proof of Lemma \ref{Lemma 4.2}, it is crucial to observe that, for $(n_1,n_2,n_3,n_4) \in \Gamma_4$:

$$n_1^2-n_2^2+n_3^2-n_4^2=(n_1-n_2)(n_1+n_2)+(n_3-n_4)(n_3+n_4)=(n_1-n_2)(n_1+n_2)-(n_3-n_4)(n_1+n_2)=$$
\begin{equation}
\label{eq:denominator}
=(n_1+n_2)(n_1-n_2-n_3+n_4)=2(n_1+n_2)(n_1+n_4)
\end{equation}
In particular, when $n_1^2-n_2^2+n_3^2-n_4^2 \neq 0$, one has: $n_1+n_2,n_1+n_4 \neq 0.$

\vspace{1mm}
We must consider several cases:
\vspace{1mm}

$\diamondsuit$\textbf{Case 1:} $N_2^* \gg N_3^*.$

\vspace{1mm}

$\diamondsuit$\textbf{Case 2:} $N_2^* \sim N_3^*.$

\vspace{2mm}

\,\,\textbf{Case 1:} Let's suppose WLOG that: $|n_1| \geq |n_3|, |n_2| \geq |n_4|$, and $|n_1| \sim N_1^*$.

\vspace{1mm}

One needs to consider two Subcases:
\vspace{1mm}

$\diamond$\textbf{Subcase 1:} $|n_2| \sim N_2^*.$

\vspace{1mm}

$\diamond$\textbf{Subcase 2:} $|n_3| \sim N_2^*.$

\vspace{1mm}

\textbf{Subcase 1:}

Since $n_1+n_2+n_3+n_4=0,|n_1|,|n_2|\gg |n_3|,|n_4|$, it follows that $n_1$ and $n_2$ have the opposite sign.

Consequently:

$$|n_1+n_2|=||n_1|-|n_2||.$$
However, $|n_1+n_2|=|n_3+n_4|$ so:

$$||n_1|-|n_2||=|n_3+n_4|.$$
From (\ref{eq:denominator}), one obtains:

\begin{equation}
\label{eq:denominator2}
|n_1^2-n_2^2+n_3^2-n_4^2|=2|(n_1+n_2)(n_1+n_4)| \sim N_1^* |n_3+n_4|.
\end{equation}
In the last estimate, we used the fact that $|n_1|\gg |n_4|$ and $|n_1+n_2|=|n_3+n_4|$.

\vspace{2mm}

Let us now analyze the numerator. We start by observing that \footnote{We are considering $|n_1|\geq |n_2|,|n_1| \geq N$;
it's possible that $|n_2|<N$, but this is accounted for by the $``\leq''$.}:

$$|(\theta(n_1))^2-(\theta(n_2))^2|\leq$$
\begin{equation}
\label{eq:numerator12}
\leq \frac{1}{N^{2s}}(|n_1|^{2s}-|n_2|^{2s}) \lesssim \frac{1}{N^{2s}}|n_1|^{2s-1}||n_1|-|n_2||=\frac{1}{N^{2s}}|n_1|^{2s-1}|n_3+n_4|.
\end{equation}
We now have to consider $(\theta(n_3))^2-(\theta(n_4))^2$.

\vspace{1mm}

One must consider three possibilities:

\vspace{3mm}

\textbf{Sub-subcase 1:} $|n_3|,|n_4|<N.$

\vspace{1mm}

\textbf{Sub-subcase 2:} $|n_4|<N \leq |n_3|$ or $|n_3|<N \leq |n_4|$.

\vspace{1mm}

\textbf{Sub-subcase 3:} $|n_3|,|n_4| \geq N.$

\vspace{3mm}

\textbf{Sub-subcase 1:} In this sub-subcase, one has:$(\theta(n_3))^2-(\theta(n_4))^2=0.$

\vspace{1mm}

\textbf{Sub-subcase 2:} Let's consider WLOG the case when $|n_4|<N \leq |n_3|$. The case $|n_3|<N \leq |n_4|$ is analogous.

\vspace{2mm}

We obtain:

$$|(\theta(n_3))^2-(\theta(n_4))^2|=\frac{1}{N^{2s}}||n_3|^{2s}-N^{2s}|\leq \frac{1}{N^{2s}}||n_3|^{2s}-|n_4|^{2s}|=$$

$$=\frac{1}{N^{2s}}||n_3|^{2s}-|-n_4|^{2s}|\lesssim \frac{1}{N^{2s}}|n_3|^{2s-1}|n_3+n_4|.$$
We note that the first inequality follows from the assumptions of the sub-subcase.

\textbf{Sub-subcase 3:}

We note: 
$$|(\theta(n_3))^2-(\theta(n_4))^2|=\frac{1}{N^{2s}}||n_3|^{2s}-|n_4|^{2s}|.$$

Arguing as in the previous sub-subcase, we obtain:

$$|(\theta(n_3))^2-(\theta(n_4))^2|\lesssim \frac{1}{N^{2s}}|n_3|^{2s-1}|n_3+n_4|.$$
\vspace{4mm}
So, we obtain that in Subcase 1, one has the bound:

\begin{equation}
\label{eq:numerator34}
|(\theta(n_3))^2-(\theta(n_4))^2|\lesssim \frac{1}{N^{2s}}|n_3|^{2s-1}|n_3+n_4| \lesssim \frac{1}{N^{2s}}(N_1^*)^{2s-1}|n_3+n_4|.
\end{equation}

Combining (\ref{eq:numerator12}) and (\ref{eq:numerator34}), one obtains:

\begin{equation}
\label{eq:numeratorA}
|(\theta(n_1))^2-(\theta(n_2))^2+(\theta(n_3))^2-(\theta(n_4))^2| \lesssim \frac{1}{N^{2s}}
(N_1^*)^{2s-1}|n_3+n_4|.
\end{equation}

From (\ref{eq:Vhatbound}), (\ref{eq:denominator2}), and (\ref{eq:numeratorA}), it follows that in Subcase 1:

\begin{equation}
\label{eq:psi1}
\Psi=O\big(\frac{1}{N^{2s}}(N_1^*)^{2s-2}\big)=O\big(\frac{1}{(N_1^*)^2}\frac{(N_1^*)^{2s}}{N^{2s}}\big)=
O\big(\frac{1}{(N_1^*)^2}\theta(N_1^*)\theta(N_2^*)\big).
\end{equation}
\textbf{Subcase 2:}

\vspace{2mm}

Here one has $|n_3| \sim N_2^*.$
In this Subcase, we don't expect to obtain any cancelation in the numerator or in the denominator. We get:

$$|(\theta(n_1))^2-(\theta(n_2))^2+(\theta(n_3))^2-(\theta(n_4))^2|=O((\theta(n_1))^2)=O(\frac{1}{N^{2s}}(N_1^*)^{2s})$$
$$|n_1^2-n_2^2+n_3^2-n_4^2| \sim (N_1^*)^2.$$

So, again using (\ref{eq:Vhatbound}), we deduce:

\begin{equation}
\label{eq:psi2}
\Psi=O\big(\frac{1}{N^{2s}}(N_1^*)^{2s-2}\big)=O\big(\frac{1}{(N_1^*)^2}\theta(N_1^*)\theta(N_2^*)\big).
\end{equation}
\vspace{2mm}
\textbf{Case 2:}

\vspace{1mm}
\textbf{Subcase 1:}

\vspace{2mm}

We first consider the subcase when: $N_1^* \sim N_2^* \sim N_3^* \gg N_4^*.$

Let us assume WLOG that $|n_4|\sim N_4^*.$

Then, by $(\ref{eq:denominator})$, one has:

\begin{equation}
\label{eq:denominatorcase2}
||n_1|^2-|n_2|^2+|n_3|^2-|n_4|^2|=2|(n_1+n_2)(n_1+n_4)|\gtrsim N_1^*.
\end{equation}

Here, we  also used the fact that $|n_1+n_4| \sim N_1^*$ and $|n_1+n_2| \geq 1$. The latter observation follows from the fact that the problem is periodic.

We bound the numerator by:

\begin{equation}
\label{eq:numeratorcase2}
|(\theta(n_1))^2-(\theta(n_2))^2+(\theta(n_3))^2-(\theta(n_4))^2|\lesssim (\theta(n_1))^2 \lesssim \frac{1}{N^{2s}} {(N_1^*)}^{2s}.
\end{equation}

It follows from (\ref{eq:denominatorcase2}), (\ref{eq:numeratorcase2}), and (\ref{eq:Vhatbound}) that:

$$\Psi=O\big(\frac{1}{N_1^*}\frac{(N_1^*)^{2s}}{N^{2s}}\big)=O\big(\frac{1}{(N_1^*)^2}\frac{(N_1^*)^{2s}}{N^{2s}}N_1^*\big)=$$

\begin{equation}
\label{eq:psi3}
=O\big(\frac{1}{(N_1^*)^{2}}\theta(N_1^*)\theta(N_2^*)N_3^*\big).
\end{equation}
\textbf{Subcase 2:}

\vspace{2mm}

In this case, all the $N_j^*$'s are equivalent:

$$N_1^* \sim N_2^* \sim N_3^* \sim N_4^*.$$
By using (\ref{eq:denominator}), and the fact that $|n_1+n_2| \geq 1,|n_1+n_4| \geq 1$, it follows that:

\begin{equation}
\label{eq:denominatorcase3}
||n_1|^2-|n_2|^2+|n_3|^2-|n_4|^2|=2|(n_1+n_2)(n_1+n_4)| \gtrsim 1.
\end{equation}
As before:

\begin{equation}
\label{eq:numeratorcase3}
|(\theta(n_1))^2-(\theta(n_2))^2+(\theta(n_3))^2-(\theta(n_4))^2|\lesssim (\theta(n_1))^2 \lesssim \frac{1}{N^{2s}} {N_1^*}^{2s}.
\end{equation}
(\ref{eq:Vhatbound}), (\ref{eq:numeratorcase3}), and (\ref{eq:denominatorcase3}) now imply:

$$\Psi=O \big(\frac{(N_1^*)^{2s}}{N^{2s}}\big)=O\big(\frac{1}{(N_1^*)^2}\theta(N_1^*)\theta(N_2^*)(N_1^*)^2\big)=$$
\begin{equation}
\label{eq:psi4}
=O\big(\frac{1}{(N_1^*)^2}\theta(N_1^*)\theta(N_2^*)N_3^* N_4^*\big).
\end{equation}
Lemma \ref{Lemma 4.2} now follows from (\ref{eq:psi1}),(\ref{eq:psi2}),(\ref{eq:psi3}), and (\ref{eq:psi4}).
\end{proof}

\vspace{2mm}

\subsubsection{An approximation result for the higher modified energies.}

Let us now show that $E^2(u)$ is a good approximation of $E^1(u)$ in a certain precise sense. The result that we prove is:

\begin{lemma}
\label{Lemma 4.3}
If we take $N$ to be sufficiently large, then:
$$E^2(u) \sim E^1(u),$$
where the implied constant no longer depends on $N$, but depends continuously on energy and mass.
\end{lemma}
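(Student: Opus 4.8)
The statement to prove is $E^2(u) \sim E^1(u)$ for $N$ sufficiently large, with the implied constant depending only (continuously) on energy and mass. Since $E^2(u) = E^1(u) + \lambda_4(M_4;u)$ with $M_4 = \Psi$, it suffices to show that the correction term is a lower-order perturbation, namely
$$
|\lambda_4(\Psi;u)| \leq \frac{C}{N^{\gamma}} E^1(u)
$$
for some $\gamma > 0$ and $C = C(\text{Energy},\text{Mass})$; then for $N$ large the factor $C/N^{\gamma}$ is $\leq \tfrac12$, and $\tfrac12 E^1(u) \leq E^2(u) \leq \tfrac32 E^1(u)$ follows immediately. So the whole game is a single multilinear estimate on $\lambda_4(\Psi;u)$.

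The plan is to run a dyadic Littlewood–Paley decomposition exactly as in Section 3: localize the four factors of $u$ to $|n_j| \sim N_j$, reorder so that $N_1^* \geq N_2^* \geq N_3^* \geq N_4^*$, and feed in the pointwise bound $\Psi = O\big((N_1^*)^{-2}\,\theta(N_1^*)\theta(N_2^*)\,N_3^* N_4^*\big)$ from Lemma \ref{Lemma 4.2}. On $\Gamma_4$ we have $N_1^* \sim N_2^* \gtrsim N$. For the four $\widehat u(n_j)$ factors, put the two high-frequency ones ($N_1^*, N_2^*$) in $L^4_{t,x}$ and bound them by $\|\mathcal{D}v\|_{X^{0,\frac38}} \lesssim \|\mathcal{D}v\|_{X^{0,\frac12+}}$ after converting $\theta(N_1^*)\theta(N_2^*)$ into two factors of $\mathcal D$; put the two low-frequency ones in $X^{\frac12+,\frac12+} \hookrightarrow L^\infty_{t,x}$, paying $N_3^* N_4^*$ to upgrade $\|v_{N_3}\|_{X^{0,\frac12+}} \lesssim (N_3^*)^{-1}\|v_{N_3}\|_{X^{1,\frac12+}}$ and likewise for $N_4^*$. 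The factors of $N_3^* N_4^*$ from $\Psi$ are then exactly cancelled by the $(N_3^* N_4^*)^{-1}$ from this upgrade, so the net power is $\lesssim (N_1^*)^{-2}$, which after summing the geometric series in $N_1^* \gtrsim N$ (and in $N_2^* \sim N_1^*$, $N_3^* \leq N_2^*$, $N_4^* \leq N_3^*$) gives a bound $\lesssim N^{-2+}\|\mathcal Dv\|_{X^{0,\frac12+}}^2\|v\|_{X^{1,\frac12+}}^2$. Since the expression is time-independent (it's evaluated at a fixed $t$, so one can insert the extension $v$ of $u$ near that time and use Proposition \ref{Proposition 4.1}), this is $\lesssim N^{-2+}\|\mathcal D u(t)\|_{L^2}^2 = N^{-2+} E^1(u(t))$, modulo the $H^1$-bound $\|v\|_{X^{1,\frac12+}} \lesssim 1$ from conservation of energy and mass. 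Actually a simpler route avoids the $X^{s,b}$ machinery entirely: estimate $\lambda_4(\Psi;u)$ by pure frequency-space Cauchy–Schwarz, writing $\widehat{\mathcal D u}(n_j) = \theta(n_j)\widehat u(n_j)$ and using $\theta(N_1^*)\theta(N_2^*)(N_1^*)^{-2} N_3^* N_4^*$ against $|\widehat u(n_1)\cdots\widehat u(n_4)|$ on $\Gamma_4$, with two factors absorbed by $\|\mathcal Du\|_{L^2}$ and two by $\|u\|_{H^1}$ after using $N_3^* N_4^* \cdot \langle n_3\rangle^{-1}\langle n_4\rangle^{-1} \lesssim 1$; one still needs the extra decay $(N_1^*)^{-2+}$ to sum $\Gamma_4$, which is precisely what the $(N_1^*)^{-2}$ in Lemma \ref{Lemma 4.2} supplies (losing $\epsilon$ powers to make the lattice sum converge).

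The main obstacle is making the $\Gamma_4$-sum converge: the raw bound from Lemma \ref{Lemma 4.2} gives $(N_1^*)^{-2}$, but summing $\sum_{n_1,n_2,n_3 \in \mathbb Z^3}$ (with $n_4 = -n_1-n_2-n_3$) of a symbol of size $\sim (N_1^*)^{-2}\prod\langle n_j\rangle^{-1}$ against products of $\ell^2$ sequences requires care — one should first use Cauchy–Schwarz to split off the two $\|\mathcal Du\|_{L^2}$-factors, then the remaining sum over the two low frequencies converges because of the $\langle n_3\rangle^{-1}\langle n_4\rangle^{-1}$ from the $H^1$ upgrade, together with the constraint $n_1 + \cdots + n_4 = 0$ which removes one free summation. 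Concretely, after Cauchy–Schwarz one is left needing $\sup_{n_1}\sum_{n_3,n_4} (N_1^*)^{-4+}\langle n_3\rangle^{-2}\langle n_4\rangle^{-2}<\infty$, which holds, and then a further dyadic sum in $N_1^* \gtrsim N$ of $N^{-4+}\cdot N_1^{0+}$ type, giving the claimed $N^{-4+}$ (hence certainly $N^{-\gamma}$ for a positive $\gamma$). Either way one must be a little careful that the $0+$ losses from Lemma \ref{Lemma 4.2} and from the lattice summation stay below the $(N_1^*)^{-2}$ gain; since the gain is a full two powers and the losses are $\epsilon$-powers, there is ample room. Once the perturbation estimate $|\lambda_4(\Psi;u)| \lesssim N^{-\gamma} E^1(u)$ is in hand, the conclusion $E^2(u) \sim E^1(u)$ for $N$ large is immediate, and the constant inherits continuous dependence on energy and mass from Proposition \ref{Proposition 4.1} and from the continuity of the $H^1$-bound in the conserved quantities.
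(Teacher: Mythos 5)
Your high-level reduction is correct: the Lemma follows once you show $|\lambda_4(\Psi;u(t))| \lesssim N^{-\gamma}E^1(u(t))$ for some $\gamma>0$ with constants continuous in energy and mass, and this is a multilinear estimate driven by the pointwise bound on $\Psi$ from Lemma \ref{Lemma 4.2}. Your second (frequency-space) route is essentially the paper's proof: the paper takes inverse Fourier transforms and applies an $L^2_x,L^2_x,L^{\infty}_x,L^{\infty}_x$ H\"{o}lder inequality \emph{at a fixed time} $t$, using the Sobolev embedding $H^{\frac{1}{2}+}_x\hookrightarrow L^{\infty}_x$ on the two low-frequency factors, absorbing $\theta(N_1^*)\theta(N_2^*)$ into $\|\mathcal{D}u\|_{L^2}^2$ and $(N_3^*)^{\frac{1}{2}+}(N_4^*)^{\frac{1}{2}+}$ into $\|u\|_{H^1}^2$ with a small $(N_3^*)^{0-}(N_4^*)^{0-}$ gain for summability; the net decay is $(N_1^*)^{-1+}$, hence $N^{-1+}$ after dyadic summation. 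In particular, pure Cauchy--Schwarz in frequency alone doesn't close the lattice sum over the two low-frequency variables; the $H^{\frac{1}{2}+}\hookrightarrow L^{\infty}$ embedding (equivalently, $\langle n\rangle^{-\frac{1}{2}-}\in\ell^2$) is what makes that sum converge, and no $X^{s,b}$ machinery is needed or used.

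Your first and more detailed route is, however, broken: $\lambda_4(\Psi;u(t))$ is a fixed-time functional with no integration in $t$, so the spacetime norms $L^4_{t,x}$, $L^{\infty}_{t,x}$ and the $X^{0,\frac{3}{8}}$/$X^{\frac{1}{2}+,\frac{1}{2}+}$ estimates simply do not apply; inserting the extension $v$ from Proposition \ref{Proposition 4.1} does not manufacture a time integral. That machinery is appropriate for the next step, Lemma \ref{Bigstar_Hartree}, where one integrates $\frac{d}{dt}E^2$ over $[t_0,t_0+\delta]$. Relatedly, your power counting overstates the gain: putting a dyadic piece in $L^{\infty}_x$ through $H^{\frac{1}{2}+}_x$ only buys $(N_j^*)^{-\frac{1}{2}+}$ (not $(N_j^*)^{-1}$) against $\|u\|_{H^1}$, so the $N_3^*N_4^*$ from $\Psi$ is only half-cancelled and the attainable rate is $N^{-1+}$, not $N^{-2+}$ or $N^{-4+}$. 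Since any positive $\gamma$ gives the Lemma, this last point is harmless for the conclusion, but the first route must be replaced by the fixed-time H\"{o}lder argument.
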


\begin{proof}
By construction, we have that: $|E^2(u(t))-E^1(u(t))|=|\lambda_4(M_4;u(t))|$, where $M_4$ has been defined in (\ref{eq:definitionofM4}).Let us WLOG consider the contribution to $\lambda_4(M_4;u(t))$ in which $|n_1| \geq |n_2| \geq |n_3| \geq |n_4|$. The other contributions are bounded analogously. With notation from before, we obtain the following localization:

\begin{equation}
\label{eq:Nj*localization}
N_1^* \geq N_2^* \geq N_3^* \geq N_4^*;\,\,N_1^* \gtrsim N.
\end{equation}
Using Lemma \ref{Lemma 4.2} we note that the corresponding contribution to $|E^2(u)-E^1(u)|$ is:

$$\lesssim \sum_{n_1+n_2+n_3+n_4=0,|n_1|\geq \ldots \geq |n_4|}\,\,\sum_{N_j^* \,\mbox{satisfying}\, (\ref{eq:Nj*localization})}$$
$$\frac{1}{(N_1^*)^2} \theta(N_1^*)\theta(N_1^*)N_3^* N_4^*|\widehat{u_{N_1^*}}(n_1)||\widehat{\overline{u_{N_2^*}}}(n_2)||\widehat{u_{N_3^*}}(n_3)||\widehat{\overline{u_{N_4^*}}}(n_4)|.$$
By taking inverse Fourier transforms, using an $L^2_x,L^2_x,L^{\infty}_x,L^{\infty}_x$ H\"{o}lder inequality, the
$H^{\frac{1}{2}+}_x\hookrightarrow L^{\infty}_x$ Sobolev embedding and the fact that $\|\cdot\|_{L^2_x},\|\cdot\|_{H^{\frac{1}{2}+}_x}$
are invariant under change of sign in the Fourier transform, we obtain that the previous quantity is:

$$\lesssim \sum_{N_j^* \,\mbox{satisfying}\, (\ref{eq:Nj*localization})}
\frac{1}{(N_1^*)^{1-}} \|\theta(N_1^*)u_{N_1^*}\|_{L^2}\|\theta(N_2^*)u_{N_2^*}\|_{L^2}
\|(N_3^*)^{\frac{1}{2}-}u_{N_3^*}\|_{H^{\frac{1}{2}+}}\|(N_4^*)^{\frac{1}{2}-}u_{N_4^*}\|_{H^{\frac{1}{2}+}}
\lesssim$$
$$\lesssim \sum_{N_j^* \,\mbox{satisfying}\, (\ref{eq:Nj*localization})} \frac{1}{(N_1^*)^{1-}}
\|\mathcal{D}u\|_{L^2}^2 \|u\|_{H^1}^2$$
$$\lesssim \frac{1}{N^{1-}} \|\mathcal{D}u\|_{L^2}^2= \frac{1}{N^{1-}}E^1(u).$$
The other contributions are bounded in an analogous way. Hence,

$$|E^2(u)-E^1(u)| \lesssim \frac{1}{N^{1-}}E^1(u).$$
Thus, if we take $N$ sufficiently large, we get for the fixed time $t$:

\begin{equation}
\label{eq:E2E1}
E^2(u(t)) \sim E^1(u(t)).
\end{equation}
The implied constant above doesn't depend on $N$ as long as we choose $N$ to be sufficiently large.
It also doesn't depend on $t$. We see that it depends on the uniform bound on $\|u(t)\|_{H^1}$, hence it depends continuously on energy and mass.

\end{proof}
Hence, in order to bound $E^1(u)$, it suffices to bound $E^2(u).$

\vspace{2mm}

\subsubsection{Estimate on the increment of $E^2(u)$ and proof of Theorem \ref{Theorem 2}.}

For $t_0 \in \mathbb{R}$,we now want to estimate the increment: $$E^2(u(t_0+\delta))-E^2(u(t_0)).$$

\vspace{2mm}

The bound that we will prove is:

\begin{lemma}
\label{Bigstar_Hartree}
For all $t_0 \in \mathbb{R}$, one has: $$|E^2(u(t_0+\delta))-E^2(u(t_0))|\lesssim \frac{1}{N^{2-}}E^2(u(t_0)).$$
\end{lemma}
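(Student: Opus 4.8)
The plan is to follow the architecture of the proof of Lemma \ref{Bigstar1}, exploiting the extra smoothing built into $E^2$. Integrating $(\ref{eq:incremenentE2})$ over $[t_0,t_0+\delta]$ gives $E^2(u(t_0+\delta))-E^2(u(t_0))=-i\int_{t_0}^{t_0+\delta}\lambda_6(M_6;u(t))\,dt$, with $M_6$ as in $(\ref{eq:M_6_Hartree})$. First I would replace $u$ on $[t_0,t_0+\delta]$ by the global extension $v$ from Proposition \ref{Proposition 4.1} (which agrees with $u$ there and obeys $(\ref{eq:properties of v22})$ and $(\ref{eq:properties of v32})$), and reduce to $t_0=0$ by time-translation invariance of all implied constants. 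Since Lemma \ref{Lemma 4.3} gives $E^2(u(0))\sim E^1(u(0))=\|\mathcal{D}u(0)\|_{L^2}^2$ once $N$ is large, it then suffices to prove $\big|\int_0^{\delta}\lambda_6(M_6;v(t))\,dt\big|\lesssim N^{-2+}\|\mathcal{D}u(0)\|_{L^2}^2$.

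The core of the argument is a pointwise bound on $M_6$. I would dyadically localize each factor, $|n_j|\sim N_j$, write $N_1^*\geq N_2^*\geq\cdots\geq N_6^*$ for the decreasing rearrangement, and (using the symmetries as in Lemma \ref{Bigstar1}, possibly after a split according to the sign pattern of the two largest frequencies) reduce to $|n_1|\geq\cdots\geq|n_6|$. From the definition of $\theta$, the structure of $(\ref{eq:M_6_Hartree})$, and $|\widehat{V}|\lesssim 1$, one sees that $M_6$ vanishes unless $N_1^*\sim N_2^*\gtrsim N$. Applying Lemma \ref{Lemma 4.2} to each of the four terms $M_4(n_{123},n_4,n_5,n_6),\,M_4(n_1,n_{234},n_5,n_6),\,M_4(n_1,n_2,n_{345},n_6),\,M_4(n_1,n_2,n_3,n_{456})$ — keeping in mind that on $\Gamma_6$ a grouped argument equals minus the sum of the other three frequencies, so, for example, $|n_{123}|=|n_4+n_5+n_6|\lesssim N_3^*$ — I would establish
$$|M_6|\lesssim \frac{1}{(N_1^*)^2}\,\theta(N_1^*)\,\theta(N_2^*)\,N_3^*\,N_4^*.$$
I expect this to be the main obstacle. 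The delicate case is the term $M_4(n_{123},n_4,n_5,n_6)$, whose four arguments can all collapse to scale $\lesssim N_3^*$: there Lemma \ref{Lemma 4.2} alone yields only $O(\theta(N_3^*)^2)$, but since $N_1^*\sim N_2^*\gtrsim N$ and $s\geq 1$ one has $\theta(N_3^*)^2\lesssim \theta(N_1^*)\theta(N_2^*)\,\tfrac{N_3^*N_4^*}{(N_1^*)^2}$, and a short case check (as in the sub-subcase analysis of Lemma \ref{Lemma 4.2}, splitting on whether the surviving argument exceeds $N$) gives the stated bound; in the remaining configurations the relevant $\Psi$ vanishes because all of its arguments lie below $N$.

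Granting this, the rest proceeds exactly as in the proof of Lemma \ref{Bigstar1}. Passing to the spacetime Fourier transform on $\Gamma_6$, replacing each transform by its absolute value, and using the real-Fourier-transform identity $(\ref{eq:realfourier})$, one reduces to estimating $\int_{\mathbb{R}}\int_{S^1}F_1\overline{F_2}F_3\overline{F_4}F_5\overline{F_6}\,dx\,dt$ with $\widehat{F_1}=|\widetilde{(\chi_{[0,\delta]}v)_{N_1^*}}|$ and $\widehat{F_j}=|\widetilde{v_{N_j^*}}|$ for $j\geq 2$. Applying H\"older with exponents $L^4_{t,x},L^4_{t,x},L^4_{t,x},L^4_{t,x},L^\infty_{t,x},L^\infty_{t,x}$, then $(\ref{eq:strichartztorus})$, $(\ref{eq:Linftytx1})$, and the localization estimate $(\ref{eq:key localization})$ to remove $\chi_{[0,\delta]}$, the factors $\theta(N_1^*),\theta(N_2^*)$ turn $v_{N_1^*},v_{N_2^*}$ into $\mathcal{D}v_{N_1^*},\mathcal{D}v_{N_2^*}$, the factors $N_3^*,N_4^*$ are absorbed via $\|v_{N_j^*}\|_{X^{0,\frac38+}}\lesssim (N_j^*)^{-1}\|v\|_{X^{1,\frac12+}}$, and the two $L^\infty_{t,x}$ factors cost $(N_5^*)^{-\frac12+}(N_6^*)^{-\frac12+}$, giving a contribution $\lesssim (N_1^*)^{-2}(N_5^*)^{-\frac12+}(N_6^*)^{-\frac12+}\|\mathcal{D}v\|_{X^{0,\frac12+}}^2\|v\|_{X^{1,\frac12+}}^4$. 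Summing the dyadic pieces over $N_1^*\sim N_2^*\gtrsim N$ (the logarithmic loss from the free sums in $N_3^*,N_4^*$ absorbed into the $0+$), and then invoking $(\ref{eq:properties of v22})$, $(\ref{eq:properties of v32})$, the uniform $H^1$ bound, and Lemma \ref{Lemma 4.3}, yields the bound $\lesssim N^{-2+}E^2(u(0))$, as claimed. Should the pointwise bound on $M_6$ prove insufficient in some sign configuration of the top two frequencies — the analogue of Big Case 2 in the proof of Lemma \ref{Bigstar1} — I would instead localize in the parabolic regions $\langle\tau-n^2\rangle\sim L_j$ as in \cite{BGT,Z} and recover the missing gain from $L_1L_2\gtrsim (N_1^*)^2$; but the strong factor $(N_1^*)^{-2}$ already carried by $\Psi$ should make the direct argument suffice.
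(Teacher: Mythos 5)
Your proposal follows the same high-level architecture as the paper's — extend $u$ to a global $v$ via Proposition \ref{Proposition 4.1}, pass to $t_0=0$, use Lemma \ref{Lemma 4.3} to compare $E^2$ and $E^1$, and then estimate the sextilinear expression by a pointwise multiplier bound combined with an $L^4,L^4,L^4,L^4,L^\infty,L^\infty$ H\"older inequality. Where you diverge is in the treatment of $M_6$: you localize all six individual frequencies $|n_j|\sim N_j$, rearrange them, and aim for a single pointwise bound $|M_6|\lesssim (N_1^*)^{-2}\theta(N_1^*)\theta(N_2^*)N_3^*N_4^*$ in these six variables. The paper instead treats each of the four $M_4$ terms of $(\ref{eq:M_6_Hartree})$ separately; for a term such as $M_4(n_{123},n_4,n_5,n_6)$ it localizes the \emph{grouped} frequency $|n_1+n_2+n_3|\sim N_1$ together with $|n_4|,|n_5|,|n_6|$, applies Lemma \ref{Lemma 4.2} directly to these four, and then redistributes the resulting weight $\theta(n_{123})$ (or $\langle n_{123}\rangle$) onto the three constituents $n_1,n_2,n_3$ via the Fractional Leibniz rule $\theta(n_1+n_2+n_3)\lesssim\theta(n_1)+\theta(n_2)+\theta(n_3)$. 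This sidesteps ever needing a pointwise bound on $M_6$ in terms of the six individual scales.

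Your route is viable, but the intermediate inequality you write to justify it is false as stated. You claim $\theta(N_3^*)^2\lesssim\theta(N_1^*)\theta(N_2^*)\tfrac{N_3^*N_4^*}{(N_1^*)^2}$; take $N_1^*=N_2^*=N_3^*\gg N$ and $N_4^*\sim N$, so that the left side is $\sim(N_1^*/N)^{2s}$ while the right side is $\sim(N_1^*/N)^{2s}\cdot N_4^*/N_1^*\ll (N_1^*/N)^{2s}$. The resolution is that the bound you actually get from Lemma \ref{Lemma 4.2} for $M_4(n_{123},n_4,n_5,n_6)$ is $O(\theta(N_4^*)^2)$, not $O(\theta(N_3^*)^2)$: since $|n_1|\geq\cdots\geq|n_6|$, one has $|n_{123}|=|n_4+n_5+n_6|\leq 3N_4^*$, so \emph{all four} arguments of that $\Psi$ have magnitude $\lesssim N_4^*$, and the maximum of the four is $\sim N_4^*$ (not $N_3^*$). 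With that correction, the inequality $\theta(N_4^*)^2\lesssim\theta(N_1^*)\theta(N_2^*)\tfrac{N_3^*N_4^*}{(N_1^*)^2}$ does hold for $s\geq1$ (write $(N_4^*)^{2s-1}=(N_4^*)^{2s-2}N_4^*\leq(N_1^*)^{2s-2}N_3^*$), and a similar case check works for the other three $M_4$ terms, so the pointwise claim on $M_6$ becomes correct and the rest of your argument goes through. Your closing hedge about a parabolic-region localization \`a la Big Case 2 of Lemma \ref{Bigstar1} is unnecessary here: the factor $(N_1^*)^{-2}$ already built into $\Psi$ supplies the entire gain, and indeed the paper never invokes the $\langle\tau-n^2\rangle$ decomposition in this proof.
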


\vspace{3mm}

Let us observe how Lemma \ref{Bigstar_Hartree} implies Theorem \ref{Theorem 2}:

\begin{proof}(\emph{of Theorem \ref{Theorem 2} assuming Lemma \ref{Bigstar_Hartree}})
We argue similarly as in the proof of Theorem \ref{Theorem 1}. Namely, from Lemma \ref{Bigstar_Hartree}, together with (\ref{eq:bound on D}) and Lemma \ref{Lemma 4.3}, we deduce that:

\begin{equation}
\label{eq:intermediatebound}
E^2(u(T))\lesssim E^2(\Phi) \lesssim E^1(\Phi) = \|\mathcal{D}\Phi\|_{L^2}^2\lesssim \|\Phi\|_{H^s}^2,
\end{equation}
whenever $T \lesssim  N^{2-}.$

So, for such $T$, one has, from (\ref{eq:bound on D}), Lemma \ref{Lemma 4.3}, and (\ref{eq:intermediatebound}):

\begin{equation}
\label{eq:Tbound}
\|u(T)\|_{H^s} \lesssim N^s \sqrt{E^1(u(T))} \lesssim N^s \sqrt{E^2(u(T))}
\lesssim N^s \|\Phi\|_{H^s}.
\end{equation}
Since $T \lesssim N^{2-}$, we can take $N=T^{\frac{1}{2}+}$. Substituting this into (\ref{eq:Tbound}), we obtain:

\begin{equation}
\label{eq:finalboundforTbig}
\|u(T)\|_{H^s} \lesssim T^{\frac{1}{2}s+}\|\Phi\|_{H^s}.
\end{equation}
Here the implied constants depend only on $(s,Energy,Mass)$, and they depend continuously on energy and mass.

Using (\ref{eq:finalboundforTbig}), and arguing as in the proof of Theorem \ref{Theorem 1}, we obtain that, for $s \geq 1$, there exists $C=C(s,Energy,Mass)$, depending continuously on energy and mass such that for all $t \in \mathbb{R}$:
\begin{equation}
\label{eq:HartreeBound}
\|u(t)\|_{H^s}\leq C(1+|t|)^{\frac{1}{2}s+}\|\Phi\|_{H^s}.
\end{equation}
\end{proof}

We now prove Lemma \ref{Bigstar_Hartree}:

\begin{proof}(\emph{of Lemma \ref{Bigstar_Hartree}})
From Proposition \ref{Proposition 4.1}, given $t_0$, we can construct a global function $v$  which agrees with $u$ on $[t_0,t_0+\delta]$ and which satisfies appropriate $X^{s,b}$ bounds. Let's WLOG suppose that $t_0=0.$ We note that all the constants depend only on conserved quantities of the equation, and hence will be independent of $t_0$. From Lemma \ref{Lemma 4.3}, one obtains for $t \in [0,\delta]$:

$$E^2(v) \sim E^1(v).$$
Furthermore, from (\ref{eq:incremenentE2}) and the construction of $v$, we recall for $t \in [0,\delta]$:
$$\frac{d}{dt}E^2(v(t))=-i \lambda_6(M_6;v(t)).$$
We want to estimate $\int_0^{\delta} \frac{d}{dt}E^2(v)(t)dt$.
In order to do this, we just consider the contribution:
$$\int_{0}^{\delta}\sum_{n_1+n_2+n_3+n_4+n_5+n_6=0}
M_4(n_{123},n_4,n_5,n_6)\widehat{V}(n_1+n_2)$$
\begin{equation}
\label{eq:3zvijezde}
\widehat{v}(n_1)\widehat{\bar{v}}(n_2)\widehat{v}(n_3)\widehat{\bar{v}}(n_4)\widehat{v}(n_5)\widehat{\bar{v}}(n_6)dt=:K
\end{equation}
By symmetry, the other contributions are bounded in an analogous way, since as we will see, our argument won't
depend on which factor comes with a complex conjugate, and which factor doesn't.

\vspace{2mm}

Let us now dyadically localize in frequency, with the following localizations:
$$|n_1+n_2+n_3| \sim N_1,\,|n_4| \sim N_2,\,|n_5| \sim N_3,\,|n_6|\sim N_4.$$
\vspace{1mm}
As before, we introduce the dyadic integers $N_1^*,N_2^*,N_3^*,N_4^*$.
\vspace{1mm}
It is then the case that:
\begin{equation}
\label{eq:Njstar}
N_1^* \geq N_2^* \geq N_3^* \geq N_4^*,\,N_1^* \gtrsim N.
\end{equation}
The latter fact follows from the fact that the only nonzero contribution comes from the case where
$(\theta(n_1+n_2+n_3))^2-(\theta(n_4))^2+(\theta(n_5))^2-(\theta(n_6))^2 \neq 0.$
\vspace{2mm}
Let's fix an admissible configuration $(N_1,N_2,N_3,N_4)$ and let's denote its contribution to $K$ by:
$$K_{N_1,N_2,N_3,N_4}:=\int_0^{\delta} \sum_{n_1+n_2+n_3+n_4+n_5+n_6=0}
M_4(n_1+n_2+n_3,n_4,n_5,n_6)$$
$$\widehat{(v\bar{v}v)}_{N_1}(n_1+n_2+n_3)
\widehat{\overline{v_{N_2}}}(n_2)\widehat{v_{N_3}}(n_5)\widehat{\overline{v_{N_4}}}(n_6)dt.$$
We must consider several cases:

\vspace{2mm}

$\diamondsuit$\textbf{Case 1:} $N_1=N_1^*$ or $N_1=N_2^*.$

\vspace{1mm}

$\diamondsuit$\textbf{Case 2:} $N_1=N_3^*$ or $N_1=N_4^*.$

\vspace{2mm}

\textbf{Case 1:}

By symmetry, we consider the case $N_1=N_1^*.$ We will also consider the case when $N_2=N_2^*,N_3=N_3^*,N_4=N_4^*$. The other cases are bounded in a similar way (we just group the terms differently). We obtain:

$$|K_{N_1,N_2,N_3,N_4}|=\big| \int_{\mathbb{R}}\,\,\sum_{n_1+n_2+n_3+n_4+n_5+n_6=0}$$
\vspace{1mm}
$$M_4(n_1+n_2+n_3,n_4,n_5,n_6)\widehat{(v\bar{v}v)}_{N_1}(n_1+n_2+n_3)
(\overline{\chi_{[0,\delta]}v}_{N_2})\,\widetilde{\,}\,(n_4) \widehat{v_{N_3}}(n_5)
\widehat{\overline{v}_{N_4}}(n_6)dt \big|=$$
\vspace{1mm}
$$=\big| \int_{\tau_1+\tau_2+\tau_3+\tau_4+\tau_5+\tau_6=0} \sum_{n_1+n_2+n_3+n_4+n_5+n_6=0} M_4(n_1+n_2+n_3,n_4,n_5,n_6)$$
\vspace{1mm}
$$\widetilde{(v\bar{v}v)}_{N_1}(n_1+n_2+n_3,\tau_1+\tau_2+\tau_3)(\overline{\chi_{[0,\delta]}v}_{N_2})\,\widetilde{\,}\,(n_4,\tau_4)
\widetilde{v_{N_3}}(n_5,\tau_5)\widetilde{\overline{v}_{N_4}}(n_6,\tau_6)d\tau_j \big|$$
\vspace{1mm}
Using the triangle inequality, Lemma \ref{Lemma 4.2}, and $(\ref{eq:Vhatbound})$, this expression is:

\vspace{1mm}
$$\lesssim \int_{\tau_1+\tau_2+\tau_3+\tau_4+\tau_5+\tau_6=0} \sum_{n_1+n_2+n_3+n_4+n_5+n_6=0} \frac{1}{(N_1^*)^2}
\theta(N_1^*) \theta(N_2^*) N_3^* N_4^*$$
\vspace{1mm}
$$|\widetilde{(v\bar{v}v)}_{N_1}(n_1+n_2+n_3,\tau_1+\tau_2+\tau_3)||(\overline{\chi_{[0,\delta]}v}_{N_2})\,\widetilde{\,}\,(n_4,\tau_4)|
|\widetilde{v_{N_3}}(n_5,\tau_5)||\widetilde{\overline{v}_{N_4}}(n_6,\tau_6)|d\tau_j$$
\vspace{1mm}
Since $\theta(N_1^*) \sim \theta(n_1+n_2+n_3)$, by localization, and since $|\widetilde{(v\bar{v}v)}_{N_1}| \leq |\widetilde{v\bar{v}v}|$ by restriction, this expression is:

\vspace{1mm}
$$\lesssim \int_{\tau_1+\tau_2+\tau_3+\tau_4+\tau_5+\tau_6=0} \sum_{n_1+n_2+n_3+n_4+n_5+n_6=0} \frac{1}{(N_1^*)^2}
\theta(n_1+n_2+n_3) \theta(N_2) N_3 N_4$$
\vspace{1mm}
$$|\widetilde{v\bar{v}v}(n_1+n_2+n_3,\tau_1+\tau_2+\tau_3)||(\overline{\chi_{[0,\delta]}v}_{N_2})\,\widetilde{\,}\,(n_4,\tau_4)|
|\widetilde{v_{N_3}}(n_5,\tau_5)||\widetilde{\overline{v}_{N_4}}(n_6,\tau_6)|d\tau_j$$
\vspace{1mm}
$$\lesssim \int_{\tau_1+\tau_2+\tau_3+\tau_4+\tau_5+\tau_6=0} \sum_{n_1+n_2+n_3+n_4+n_5+n_6=0} \frac{1}{(N_1^*)^2}
\theta(n_1+n_2+n_3) \theta(N_2) N_3 N_4$$
\vspace{1mm}
$$|\widetilde{v}(n_1,\tau_1)||\widetilde{\bar{v}}(n_2,\tau_2)||\widetilde{v}(n_3,\tau_3)||(\overline{\chi_{[0,\delta]}v}_{N_2})\,\widetilde{\,}\,(n_4,\tau_4)|
|\widetilde{v_{N_3}}(n_5,\tau_5)||\widetilde{\overline{v}_{N_4}}(n_6,\tau_6)|d\tau_j$$
\vspace{1mm}
Since one has the \emph{``Fractional Leibniz Rule''}: $\theta(n_1+n_2+n_3) \lesssim \theta(n_1)+\theta(n_2)+\theta(n_3)$, we bound this expression by:

\vspace{1mm}
$$\lesssim \int_{\tau_1+\tau_2+\tau_3+\tau_4+\tau_5+\tau_6=0} \sum_{n_1+n_2+n_3+n_4+n_5+n_6=0} \frac{1}{(N_1^*)^2}
(\theta(n_1)+\theta(n_2)+\theta(n_3))|\widetilde{v}(n_1,\tau_1)||\widetilde{\bar{v}}(n_2,\tau_2)||\widetilde{v}(n_3,\tau_3)| $$
\vspace{1mm}
$$(\theta(N_2)|(\overline{\chi_{[0,\delta]}v}_{N_2})\,\widetilde{\,}\,(n_4,\tau_4)|)
(N_3 |\widetilde{v_{N_3}}(n_5,\tau_5)|)(N_4|\widetilde{\overline{v}_{N_4}}(n_6,\tau_6)|)d\tau_j.$$
By symmetry, it suffices to consider:

$$K_{N_1,N_2,N_3,N_4}^1:=\int_{\tau_1+\tau_2+\tau_3+\tau_4+\tau_5+\tau_6=0} \sum_{n_1+n_2+n_3+n_4+n_5+n_6=0} \frac{1}{(N_1^*)^2}
\theta(n_1)|\widetilde{v}(n_1,\tau_1)||\widetilde{\bar{v}}(n_2,\tau_2)||\widetilde{v}(n_3,\tau_3)| $$
\vspace{1mm}
$$(\theta(N_2)|(\overline{\chi_{[0,\delta]}v}_{N_2})\,\widetilde{\,}\,(n_4,\tau_4)|)
(N_3 |\widetilde{v_{N_3}}(n_5,\tau_5)|)(N_4|\widetilde{\overline{v}_{N_4}}(n_6,\tau_6)|)d\tau_j=$$
\vspace{2mm}

We now use an $L^4_{t,x},L^4_{t,x},L^4_{t,x},L^4_{t,x},L^{\infty}_{t,x},L^{\infty}_{t,x}$ H\"{o}lder's inequality, and argue as in previous sections to deduce that this term is:

$$\lesssim \frac{1}{(N_1^*)^2}\|\mathcal{D}v\|_{X^{0,\frac{1}{2}+}} \|v\|_{X^{\frac{1}{2}+,\frac{1}{2}+}}^2
\|\mathcal{D}v\|_{X^{0,\frac{1}{2}+}}\|v\|_{X^{1,\frac{1}{2}+}}^2\leq$$
$$\leq \frac{1}{(N_1^*)^2} \|\mathcal{D}v\|_{X^{0,\frac{1}{2}+}}^2\|v\|_{X^{1,\frac{1}{2}+}}^4$$
which by the $X^{s,b}$ bounds on $v$ is:
$$\lesssim \frac{1}{(N_1^*)^2} \|\mathcal{D}\Phi\|_{L^2}^2 \|\Phi\|_{H^1}^4\lesssim \frac{1}{(N_1^*)^2} \|\mathcal{D}\Phi\|_{L^2}^2.$$
One gets the same bound for the other contributions to $K_{N_1,N_2,N_3,N_4}$ in this Case by symmetry.
\vspace{2mm}

\textbf{Case 2:} We recall that here $N_1=N_3^*$ or $N_1=N_4^*$. By symmetry, we consider the case $N_1=N_3^*$.

\vspace{2mm}

Arguing analogously as in the previous Case, we get the same bound as before. The only difference is that now, in the appropriate bound for
$M_4$, we replace $N_3^*$ by $\langle n_1+n_2+n_3 \rangle$ and we then use the inequality:

$$\langle n_1+n_2+n_3 \rangle \lesssim \langle n_1 \rangle + \langle n_2 \rangle + \langle n_3 \rangle$$
as the ``\emph{Fractional Leibniz Rule}''. So, in any case, we may conclude that:

\begin{equation}
\label{eq:Kbound}
|K_{N_1,N_2,N_3,N_4}|\lesssim \frac{1}{(N_1^*)^2} \|\mathcal{D}\Phi\|_{L^2}^2.
\end{equation}
The implied constant depends only on $(s,Energy,Mass)$.
Using (\ref{eq:Kbound}),(\ref{eq:Njstar}) and summing, it follows that:

$$|K|\lesssim \frac{1}{N^{2-}}\|\mathcal{D}\Phi\|_{L^2}^2=\frac{1}{N^{2-}}E^1(\Phi).$$
By using Lemma \ref{Lemma 4.3}, it follows that:

$$|K|\lesssim \frac{1}{N^{2-}}E^2(\Phi).$$
In an analogous way, we show that the other three terms in $E^2(u(\delta))-E^2(\Phi)$ satisfy this same bound.
The same bound holds for arbitrary $t_0$.
\end{proof}

\begin{remark}
\label{Remark 4.1}
The equation $(\ref{eq:Hartree})$ possesses solutions all of whose Sobolev norms are uniformly bounded in time.
Namely, if we take $n \in \mathbb{Z}$, and $\alpha \in \mathbb{C}$, then:

$$u(x,t):=\alpha e^{-i \widehat{V}(0)|\alpha|^2 t} e^{i(nx-n^2t)}$$
is a solution to (\ref{eq:Hartree}). Since our assumptions on $V$ imply that $\widehat{V}(0)=\int V(x) dx$ is real,
it follows that for all $s,t \in \mathbb{R}$:

$$\|u(t)\|_{H^s}=\|u(0)\|_{H^s}.$$
A similar ansatz was used to show instability of the cubic NLS on $S^1$ in Sobolev spaces of negative index in \cite{BGT1}.
\end{remark}

\begin{remark}
\label{Remark 4.2}

The same bound that we obtain for the Hartree Equation holds also for the Defocusing Cubic NLS on $S^1$ with the same proof. We formally take $V=\delta$. The cubic NLS is, however, completely integrable \cite{ZM}, so we see that the obtained bound is far from optimal. If we consider the defocusing cubic NLS on the real line, in \cite{SoSt1}, we show bounds which allow us to recover uniform bounds on the integral Sobolev norms of a solution, up to a loss of $(1+|t|)^{0+}$. The proof of this result relies on the improved Strichartz estimate and is at the moment possible only on the real line.
\end{remark}

\begin{remark}
\label{Remark 4.3}
The method of higher modified energies doesn't work for the equations we considered in Theorem \ref{Theorem 1}, i.e. if the nonlinearity is $|u|^k$ for $k \geq 2$. The reason why this is so is that the analogue of the multiplier $\Psi$ on $\Gamma_{2k}$, which we again call $\Psi$, is not pointwise bounded. Namely, if we consider the case $k=2$, we should take: $$\psi \sim \frac{(\theta(n_1))^{2s}-(\theta(n_2))^{2s}+(\theta(n_3))^{2s}-(\theta(n_4))^{2s}+(\theta(n_5))^{2s}-(\theta(n_6))^{2s}}
{|n_1|^2-|n_2|^2+|n_3|^2-|n_4|^2+|n_5|^2-|n_6|^2}.$$
Let us assume that $s$ is such that:

$$6^{2s}-2^{2s}+5^{2s}-3^{2s}+1^{2s}-7^{2s}\neq 0.$$
Then, we know that:
$$(n_1,n_2,n_3,n_4,n_5,n_6)=(6N,-2N,5N,-3N,N,-7N) \in \Gamma_6.$$
For this frequency configuration, we have:
$$(\theta(n_1))^{2s}-(\theta(n_2))^{2s}+(\theta(n_3))^{2s}-(\theta(n_4))^{2s}+(\theta(n_5))^{2s}-(\theta(n_6))^{2s}=$$
$$6^{2s}-2^{2s}+5^{2s}-3^{2s}+1^{2s}-7^{2s}\neq 0$$
and
$$|n_1|^2-|n_2|^2+|n_3|^2-|n_4|^2+|n_5|^2-|n_6|^2=36N^2-4N^2+25N^2-9N^2+N^2-49N^2=0.$$
Hence, $\Psi(n_1,n_2,n_3,n_4,n_5,n_6)$ is not well-defined. In particular, in this case, we can no longer prove a pointwise multiplier bound as in Lemma \ref{Lemma 4.2}. A similar construction can be adapted to the case $k>2$, if we just take the remaining 2k-4 frequencies to be equal to zero. We note that the phenomenon that the multiplier $\psi$ is unbounded in the case of the quintic and higher order nonlinearities is linked to the fact that the factorization property $(\ref{eq:denominator})$ no longer holds in this context.
\end{remark}

\subsection{Modification 2: Defocusing Cubic NLS with a Potential.}

Let us now consider the equation $(\ref{eq:potentialcubicnls})$.The equation (\ref{eq:potentialcubicnls}) has conserved mass as before, since $|u|^2+\lambda$ is real-valued.
On the other hand, by integrating by parts, one can check that the quantity:

$$E(u(t)):=\frac{1}{2}\int |\nabla u (x,t)|^2 dx + \frac{1}{4}\int |u(x,t)|^4 dx + \frac{1}{2}\int \lambda(x)|u(x,t)|^2 dx$$
is conserved in time. $E(u(t))$ is the conserved energy. By using H\"{o}lder's inequality and Sobolev embedding, it follows that $E$ is continuous on $H^1$.

\vspace{2mm}

We note that $E$ is not necessarily non-negative and that it doesn't give an a priori bound on $\|u(t)\|_{\dot{H}^1}$.
However, since $\lambda$ is bounded from below, we obtain:

$$\|u(t)\|_{H^1}^2 \lesssim E(u(t))+M(u(t)).$$
Hence $\|u(t)\|_{H^1}$ is uniformly bounded.

\subsubsection{Local-in-time estimates for $(\ref{eq:potentialcubicnls})$}

Let $u$ be a global solution of $(\ref{eq:potentialcubicnls})$.

\begin{proposition}
\label{Proposition 4.4}
Given $t_0 \in \mathbb{R}$, there exists a globally defined function $v:S^1 \times \mathbb{R} \rightarrow \mathbb{C}$ satisfying the properties:

\begin{equation}
\label{eq:properties of v13}
v|_{[t_0,t_0+\delta]}=u|_{[t_0,t_0+\delta]}.
\end{equation}

\begin{equation}
\label{eq:properties of v23}
\|v\|_{X^{1,\frac{1}{2}+}}\leq C(s,E(\Phi),M(\Phi))
\end{equation}

\begin{equation}
\label{eq:properties of v33}
\|\mathcal{D}v\|_{X^{0,\frac{1}{2}+}}\leq C(s,E(\Phi),M(\Phi)) \|\mathcal{D}u(t_0)\|_{L^2}.
\end{equation}
Moreover, $\delta$ and $C$ can be chosen to depend continuously on the energy and mass.
\end{proposition}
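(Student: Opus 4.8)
The proof of Proposition \ref{Proposition 4.4} follows the scheme of Proposition \ref{Proposition 3.1} (carried out in Appendix B) and Proposition \ref{Proposition 4.1}; the only genuinely new ingredient is the linear potential term $\lambda u$, so I will concentrate on how that term enters. By time-translation invariance of all the estimates involved it suffices to treat $t_0=0$; write $\Phi_0:=u(t_0)$, and recall that $\|\Phi_0\|_{H^1}$ is bounded by a continuous function of $E(\Phi)$ and $M(\Phi)$ by the conservation laws discussed at the start of Section 4.2. Fix a smooth bump $\psi$ with $\psi\equiv 1$ on $[0,1]$ and support in $[-1,2]$, set $\psi_\delta(t):=\psi(t/\delta)$, and run the contraction mapping argument on
$$\Gamma(v)(t):=\psi_\delta(t)\,S(t)\Phi_0-i\,\psi_\delta(t)\int_0^t S(t-t')\,\psi_\delta(t')\big(|v|^2v+\lambda v\big)(t')\,dt'$$
in a ball of $X^{1,\frac{1}{2}+}$.

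For the cubic term nothing changes from Proposition \ref{Proposition 3.1}: the standard linear and Duhamel estimates in $X^{s,b}$ spaces on $S^1$, together with the trilinear estimate coming from the $L^4_{t,x}$ Strichartz inequality \eqref{eq:strichartztorus}, give a contribution $\lesssim\delta^{0+}\|v\|_{X^{1,\frac{1}{2}+}}^3$. For the linear term, the key observation is that multiplication by the fixed function $\lambda\in C^\infty(S^1)$ is bounded on $X^{1,0}=L^2_tH^1_x$, with constant depending only on finitely many Sobolev norms of $\lambda$; hence, using the Duhamel estimate with its $\delta^{0+}$ gain and $X^{1,0}\hookrightarrow X^{1,-\frac{1}{2}+}$,
$$\Big\|\psi_\delta\int_0^t S(t-t')\psi_\delta(\lambda v)\,dt'\Big\|_{X^{1,\frac{1}{2}+}}\lesssim \delta^{0+}\|\lambda v\|_{X^{1,0}}\lesssim_\lambda \delta^{0+}\|v\|_{X^{1,\frac{1}{2}+}}.$$
With the analogous bounds for differences, this shows that for $\delta$ small enough — depending only on $\|\Phi_0\|_{H^1}$ and on $\lambda$, hence continuously on $E(\Phi),M(\Phi)$ — the map $\Gamma$ is a contraction on a ball of radius $\sim\|\Phi_0\|_{H^1}$, producing $v$ with $\|v\|_{X^{1,\frac{1}{2}+}}\le C(s,E(\Phi),M(\Phi))$, which is \eqref{eq:properties of v23}. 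On $[0,\delta]$ the function $v$ solves \eqref{eq:potentialcubicnls} with data $\Phi_0$, so \eqref{eq:properties of v13} follows from uniqueness, obtained exactly as in Proposition \ref{Proposition 4.1} by a Gronwall argument on $\|v(t)-w(t)\|_{L^2}$ between two $H^1$-bounded solutions, now also using $\|\lambda(v-w)\|_{L^2}\le\|\lambda\|_{L^\infty}\|v-w\|_{L^2}$.

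For \eqref{eq:properties of v33} I would run a persistence-of-regularity argument, invoking Proposition \ref{Proposition 3.2} to ensure the relevant ball is complete in the weaker metric. Applying $\mathcal{D}$ to the Duhamel formula, the cubic term is handled by the ``upside-down $I$-operator'' Leibniz-type estimate $\|\mathcal{D}(|v|^2v)\|_{X^{0,-\frac{1}{2}+}}\lesssim \|\mathcal{D}v\|_{X^{0,\frac{1}{2}+}}\|v\|_{X^{1,\frac{1}{2}+}}^2$, putting $\mathcal{D}$ on the highest-frequency factor and estimating the other two in $X^{1,\frac{1}{2}+}$, just as in Section 3. For the potential term one needs that multiplication by $\lambda$ is bounded on the $\mathcal{D}$-weighted $L^2$ with constant \emph{independent of $N$}; this holds because the multiplier $\theta$ of \eqref{eq:theta} satisfies $\theta(n)\lesssim\langle m\rangle^s\,\theta(n-m)$ for all $n,m\in\mathbb{Z}$, while $\widehat{\lambda}$ decays faster than any polynomial, so Young's inequality in the frequency variable gives $\|\mathcal{D}(\lambda v)\|_{L^2_{t,x}}\lesssim_\lambda\|\mathcal{D}v\|_{L^2_{t,x}}$ with constant depending only on $\lambda$. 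Combining these with the $\delta^{0+}$ Duhamel gain, the fixed point satisfies an inequality of the form $\|\mathcal{D}v\|_{X^{0,\frac{1}{2}+}}\le C\|\mathcal{D}\Phi_0\|_{L^2}+\delta^{0+}C_\lambda\|\mathcal{D}v\|_{X^{0,\frac{1}{2}+}}$, and shrinking $\delta$ further (still only in a way depending on conserved quantities) absorbs the last term, yielding \eqref{eq:properties of v33}. The one point requiring care is precisely this last step: one must verify that inserting the smooth potential does not destroy the $N$-uniformity of the constant in \eqref{eq:properties of v33}, which is exactly what the slow variation of $\theta$ guarantees.
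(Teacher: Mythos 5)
Your proposal is correct and follows essentially the same scheme as the paper's (fixed point in $X^{1,\frac{1}{2}+}$, persistence-of-regularity for the weighted estimate via Proposition \ref{Proposition 3.2}, and a Gronwall argument for uniqueness), with a slightly different way of treating the new linear term. The paper estimates $\|\mu v_{\delta}\|_{X^{s,b-1}}$ (with $\mu(x,t)=f(t)\lambda(x)$ a fixed smooth spacetime function) by duality, an $L^2_{t,x},L^4_{t,x},L^4_{t,x}$ H\"{o}lder inequality, the $L^4$ Strichartz estimate, and interpolation in the $b$-exponent of $v_\delta$ to obtain the $\delta^{r_0}$ gain; you instead use the boundedness of multiplication by $\lambda$ on $X^{1,0}=L^2_tH^1_x$ together with the trivial embedding $X^{1,0}\hookrightarrow X^{1,-\frac{1}{2}+}$. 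Both approaches are legitimate, and yours is a bit more elementary and transparent.

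One small imprecision is worth flagging. You attribute the smallness factor $\delta^{0+}$ to ``the Duhamel estimate''; but with $b=\frac{1}{2}+$, the Duhamel estimate actually appearing in the paper's Appendix B, namely $\|\chi_\delta\int_0^tS(t-t')w\,dt'\|_{X^{s,b}}\lesssim\delta^{\frac{1-2b}{2}}\|w\|_{X^{s,b-1}}$, is a small $\delta^{0-}$ \emph{loss}, not a gain. To close the contraction one still has to extract a positive power of $\delta$ from the time localization of $\psi_\delta(\lambda v)$: for instance, by H\"{o}lder in time and the embedding $X^{1,\frac{1}{4}+}\hookrightarrow L^4_tH^1_x$ one gets $\|\psi_\delta\lambda v\|_{X^{1,0}}\lesssim\delta^{\frac{1}{4}}\|v\|_{X^{1,\frac{1}{2}+}}$, which (together with $\delta^{\frac{1-2b}{2}}$) yields a net $\delta^{0+}$ for $\epsilon$ in $b=\frac{1}{2}+\epsilon$ chosen sufficiently small. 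This is the same interpolation mechanism the paper invokes via $\|v_\delta\|_{X^{0,\frac{3}{8}}}\lesssim\delta^{r_0}\|v\|_{X^{0,b}}$, so it is not a gap in your argument, just a compressed explanation of where the gain comes from. Your multiplier inequality $\theta(n)\lesssim_s\langle m\rangle^s\,\theta(n-m)$ used for \eqref{eq:properties of v33} is correct and nicely makes explicit the $N$-uniformity that the paper leaves implicit in its appeal to the ``fractional Leibniz rule.''
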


\begin{proof}
The proof is similar to the proof of Proposition \ref{Proposition 3.1} and Proposition \ref{Proposition 4.1}.
For the existence part, we argue by a fixed-point method.
Let us take $\delta \in (0,1)$, and let $f \in C^{\infty}_0(\mathbb{R})$ be such that
$f=1$ on $[0,1]$. Let $\mu(x,t):=f(t)\lambda(x).$

With notation as in Appendix B, we consider:

$$Lv:=\chi_{\delta}(t)S(t)\Phi-i \chi_{\delta}(t)\int_{0}^{\delta} S(t-t')(|v_{\delta}|^2 v_{\delta} + \mu v_{\delta})(t')dt'.$$
So:
$$\|Lv\|_{X^{s,b}} \leq c\delta^{\frac{1-2b}{2}} \|\Phi\|_{H^s} + c\delta^{\frac{1-2b}{2}}\||v_{\delta}|^2 v_{\delta}\|_{X^{s,b-1}} + c\delta^{\frac{1-2b}{2}}\|\mu v_{\delta}\|_{X^{s,b-1}}.$$
The new term that we have to estimate now is $\|\mu v_{\delta}\|_{X^{s,b-1}}.$
We argue by duality; let $c=c(n,\tau)$ be such that:
$$\sum_n \int d\tau |c(n,\tau)|^2 \leq 1.$$
By using the Fractional Leibniz Rule:
$$|\sum_n \int d\tau \langle n \rangle^s \langle \tau+n^2 \rangle^{b-1} (\mu v_{\delta})\,\widetilde{}\,(n,\tau)c(n,\tau)|\lesssim$$
\vspace{2mm}
$$\lesssim
\sum_n \int d\tau \big( \sum_{n_1+n_2=n} \int_{\tau_1+\tau_2=\tau} d\tau_j \frac{|c(n,\tau)|}{\langle \tau+n^2 \rangle^{1-b}}
\langle n_1 \rangle^s |\widetilde{\mu}(n_1,\tau_1)||\widetilde{v_{\delta}}(n_2,\tau_2)| \big)$$
\vspace{2mm}
$$+\sum_n \int d\tau \big( \sum_{n_1+n_2=n} \int_{\tau_1+\tau_2=\tau} d\tau_j \frac{|c(n,\tau)|}{\langle \tau+n^2 \rangle^{1-b}}|\widetilde{\mu}(n_1,\tau_1)|\langle n_2 \rangle^s |\widetilde{v_{\delta}}(n_2,\tau_2)| \big)=:I_1+I_2$$
Using Parseval's identity, an $L^2_{t,x},L^4_{t,x},L^4_{t,x}$ H\"{o}lder inequality, and $(\ref{eq:strichartztorus})$, arguing as in the proof of Proposition \ref{Proposition 3.1}, it follows that:
$$I_1 \lesssim \|c\|_{L^2_{\tau}l^2_n} \|\mu\|_{X^{s,\frac{3}{8}}} \|v_{\delta}\|_{X^{0,\frac{3}{8}}}\lesssim \delta^{r_0} \|v\|_{X^{0,b}} \leq \delta^{r_0}\|v\|_{X^{s,b}},$$
for some $r_0>0$. Here, we also used the smoothness of $\mu$ to deduce that $\|\mu\|_{X^{s,\frac{3}{8}}} \lesssim 1$. An analogous argument gives the same bound for $I_2$. The existence part of the proof now follows as in the proof of Proposition \ref{Proposition 3.1}.

For the uniqueness part, suppose that $v$,$w$ are two solutions of (\ref{eq:potentialcubicnls}) on the time interval $[0,\delta]$ with the same initial data and whose $H^1$ norms are uniformly bounded on this interval.
By using Minkowski's inequality and unitarity of the Schr\"{o}dinger operator, we deduce that, for all $t \in [0,\delta]:$

$$\|v(t)-w(t)\|_{L^2} \leq \int_0^{\delta} (\|(|v|^2v-|w|^2w)(t')\|_{L^2}+\|(\lambda v - \lambda w)(t')\|_{L^2})dt'$$
\vspace{2mm}
$$\lesssim \int_0^{\delta} ((\|v\|_{H^1}+\|w\|_{H^1})^2 + \|\lambda\|_{L^{\infty}})\|v-w\|_{L^2}dt'$$
\vspace{2mm}
$$\lesssim \int_0^{\delta} \|v(t')-w(t')\|_{L^2}dt'.$$
Uniqueness now follows from Gronwall's inequality.
\end{proof}

\subsubsection{Definition of $E^2(u)$ for $(\ref{eq:potentialcubicnls})$}

As in the case of the Hartree Equation, we will use \emph{higher modified energies}. Let:

$$E^1(u):=\|\mathcal{D}u\|_{L^2}^2,E^2(u):=E^1(u)+\lambda_4(M_4;u)$$
As before, we have to determine the multiplier $M_4$, so that we cancel the quadrilinear terms in $\frac{d}{dt}E^2(u(t))$.
We note:

$$\frac{d}{dt}E^1(u(t))\sim \frac{d}{dt}\big( \sum_{n_1+n_2=0}\widehat{\mathcal{D}u}(n_1)\widehat{\mathcal{D}\bar{u}}(n_2)\big)=$$
\vspace{2mm}
$$=\frac{1}{2}i \lambda_4((\theta(n_1))^2-(\theta(n_2))^2+(\theta(n_3))^2-(\theta(n_4))^2;u)$$

\begin{equation}
\label{eq:potentialfirstcontribution}
+i \sum_{n_1+n_2+n_3=0}((\theta(n_1))^2-(\theta(n_2))^2)\widehat{u}(n_1)\widehat{\bar{u}}(n_2)\widehat{\lambda}(n_3)
\end{equation}

On the other hand, we compute that:

$$\frac{d}{dt} \lambda_4(M_4;u)= i \lambda_4(M_4(-n_1^2+n_2^2-n_3^2+n_4^2);u)
-i\lambda_6(M_6;u)$$
\vspace{2mm}
$$-i \sum_{n_1+n_2+n_3+n_4=0}M_4\big((\lambda u)\,\widehat{}\,(n_1) \widehat{\bar{u}}(n_2)\widehat{u}(n_3)\widehat{\bar{u}}(n_4)
-\widehat{u}(n_1) (\lambda \bar{u})\,\widehat{}\,(n_2)\widehat{u}(n_3)\widehat{\bar{u}}(n_4)$$
\vspace{2mm}
\begin{equation}
\label{eq:potentialsecondcontribution}
+\widehat{u}(n_1) \widehat{\bar{u}}(n_2)(\lambda u)\,\widehat{}\,(n_3)\widehat{\bar{u}}(n_4)-\widehat{u}\,(n_1) \widehat{\bar{u}}(n_2)\widehat{u}(n_3)(\lambda \bar{u})\,\widehat{}\,(n_4) \big)
\end{equation}
Here:

$$M_6(n_1,n_2,n_3,n_4):=M_4(n_{123},n_4,n_5,n_6)-M_4(n_1,n_{234},n_5,n_6)$$
\begin{equation}
\label{eq:potentialM6}
+M_4(n_1,n_2,n_{345},n_6)-M_4(n_1,n_2,n_3,n_{456}).
\end{equation}

From $(\ref{eq:potentialfirstcontribution})$ and $(\ref{eq:potentialsecondcontribution})$, it follows that we have to choose:

\begin{equation}
\label{eq:definitionofM4potential}
M_4:= \Psi_2.
\end{equation}
where $\Psi_2$ is defined by:

$$\Psi_2: \Gamma_4 \rightarrow \mathbb{R}$$

\begin{equation}
\label{eq:definitionofpsipotential}
\Psi_2:=
\begin{cases}
  c\frac{(\theta(n_1))^2-(\theta(n_2))^2+(\theta(n_3))^2-(\theta(n_4))^2}
  {n_1^2-n_2^2+n_3^2-n_4^2} ,\,   \mbox{if }n_1^2-n_2^2+n_3^2-n_4^2 \neq 0 \\
  \,0,\,  \mbox{otherwise. }
\end{cases}
\end{equation}
for an appropriate real constant $c$.

Hence, for such a choice of $M_4$, we obtain:

$$\frac{d}{dt} E^2(u)=ci \sum_{n_1+n_2+n_3=0}((\theta(n_1))^2-(\theta(n_2))^2)\widehat{u}(n_1)\widehat{\bar{u}}(n_2)\widehat{\lambda}(n_3)$$
\vspace{2mm}
$$-i\lambda_6(M_6;u)-i \sum_{n_1+n_2+n_3+n_4=0}M_4\big[(\lambda u)\,\widehat{}\,(n_1) \widehat{\bar{u}}(n_2)\widehat{u}(n_3)\widehat{\bar{u}}(n_4)
-\widehat{u}(n_1) (\lambda \bar{u})\,\widehat{}\,(n_2)\widehat{u}(n_3)\widehat{\bar{u}}(n_4)$$
\vspace{2mm}
\begin{equation}
\label{eq:potentialcontribution1}
+\widehat{u}(n_1) \widehat{\bar{u}}(n_2)(\lambda u)\,\widehat{}\,(n_3)\widehat{\bar{u}}(n_4)-\widehat{u}\,(n_1) \widehat{\bar{u}}(n_2)\widehat{u}(n_3)(\lambda \bar{u})\,\widehat{}\,(n_4) \big]
\end{equation}

\vspace{2mm}

We dyadically localize the frequencies as $|n_j| \sim N_j$. As before, we define:
$N_1^* \geq N_2^* \geq N_3^* \geq N_4^*$.
The proof of Lemma \ref{Lemma 4.2} gives us that:

\begin{equation}
\label{eq:M_4potentialpointwisebound}
M_4=O(\frac{1}{(N_1^*)^2}\theta(N_1^*)\theta(N_2^*)N_3^*N_4^*).
\end{equation}

As we saw earlier, (\ref{eq:M_4potentialpointwisebound}) implies:

\begin{equation}
\label{eq:E1E2equivalencepotential}
E^2(u)\sim E^1(u).
\end{equation}

\subsubsection{Estimate on the increment of $E^2(u)$ for $(\ref{eq:potentialcubicnls})$ and proof of Theorem \ref{Theorem 3}.}

We want to estimate $E^2(u(t_0+\delta))-E^2(u(t_0))=E^2(v(t_0+\delta))-E^2(v(t_0))$.
The bound that we will prove is:

\begin{lemma}
\label{E2ip}
For all $t_0 \in \mathbb{R}$, one has:
$$|E^2(u(t_0+\delta))-E^2(u(t_0))|\lesssim \frac{1}{N^{1-}}E^2(u(t_0)).$$
\end{lemma}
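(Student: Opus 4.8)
The plan is to mimic the proof of Lemma \ref{Bigstar_Hartree}, reducing the problem via Proposition \ref{Proposition 4.4} to estimating the time integral $\int_{t_0}^{t_0+\delta}\frac{d}{dt}E^2(v(t))\,dt$ where $v$ is the globally defined extension agreeing with $u$ on $[t_0,t_0+\delta]$, and WLOG taking $t_0=0$. By the computation \eqref{eq:potentialcontribution1}, $\frac{d}{dt}E^2(v)$ consists of two types of terms: a genuinely sextilinear term $-i\lambda_6(M_6;v)$ coming from the cubic nonlinearity, and several quadrilinear terms of the form $\lambda_4(M_4;\,\cdot\,)$ in which one factor is replaced by $\lambda v$ (or $\lambda\bar v$), together with the leftover trilinear term $\sum_{n_1+n_2+n_3=0}((\theta(n_1))^2-(\theta(n_2))^2)\hat u(n_1)\hat{\bar u}(n_2)\hat\lambda(n_3)$. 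I expect the sextilinear contribution to be handled exactly as in the Hartree case: using the pointwise bound \eqref{eq:M_4potentialpointwisebound}, the ``Fractional Leibniz Rule'' $\theta(n_1+n_2+n_3)\lesssim\theta(n_1)+\theta(n_2)+\theta(n_3)$ to distribute $\theta$ of the merged frequency, Littlewood--Paley localization $|n_j^*|\sim N_j^*$ with $N_1^*\sim N_2^*\gtrsim N$, a Hölder inequality pairing the two $\mathcal{D}$-factors with $L^4_{t,x}$-type norms and the remaining factors with $L^\infty_{t,x}$ via $X^{\frac12+,\frac12+}\hookrightarrow L^\infty_{t,x}$, and the $X^{s,b}$ bounds \eqref{eq:properties of v23}, \eqref{eq:properties of v33}. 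This yields a gain of $(N_1^*)^{-2}$ per dyadic block, summing to $N^{-2+}$, which is stronger than needed.

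The genuinely new contributions are the quadrilinear $\lambda$-terms and the trilinear remainder. For the trilinear term, the key is that $(\theta(n_1))^2-(\theta(n_2))^2$ exhibits cancellation: on $n_1+n_2+n_3=0$ with $|n_3|\sim N_3$ small compared to $|n_1|\sim|n_2|\sim N_1$, one has $||n_1|-|n_2||\le|n_3|$, so $|(\theta(n_1))^2-(\theta(n_2))^2|\lesssim N^{-2s}|n_1|^{2s-1}|n_3|$ (exactly as in \eqref{eq:numerator12}); combined with $|\hat\lambda(n_3)|\lesssim_M \langle n_3\rangle^{-M}$ from smoothness of $\lambda$, one estimates $\theta(N_1)\theta(N_2)\cdot N_3^{1-2s}\langle N_3\rangle^{-M}\cdot N_1^{-1}\cdot(\text{products of }v\text{ factors})$. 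Wait — more carefully, after writing $\theta(n_1)^2-\theta(n_2)^2 = O(N_1^{-1}\theta(N_1)\theta(N_2)N_3)$, one gets a net factor $N_1^{-1}$ per dyadic block and a rapidly decaying factor in $N_3$, and pairing $\mathcal{D}v,\mathcal{D}v$ with $L^4_{t,x}$ norms and the Schwartz factor $\lambda$ in $L^\infty_{t,x}$, the $N_1^{-1+}$ sums to $N^{-1+}$. For the quadrilinear $\lambda$-terms, using \eqref{eq:M_4potentialpointwisebound} one again has $M_4=O((N_1^*)^{-2}\theta(N_1^*)\theta(N_2^*)N_3^*N_4^*)$; localizing, the factor $\lambda$ again contributes a rapidly decaying Fourier coefficient that absorbs $N_3^*N_4^*$, the two largest frequencies $\sim N_1^*$ carry the $\theta$-weights and are paired with $\mathcal{D}v$ in $L^4$, leaving $(N_1^*)^{-2}$ summing to $N^{-2+}$.

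Collecting all contributions, the dominant one is the trilinear remainder term, giving $|E^2(v(\delta))-E^2(v(0))|\lesssim N^{-1+}\|\mathcal{D}\Phi\|_{L^2}^2 = N^{-1+}E^1(\Phi)\sim N^{-1+}E^2(\Phi)$ by \eqref{eq:E1E2equivalencepotential}, and since all implied constants depend only on $(s,\text{Energy},\text{Mass})$ and continuously so, the bound transfers to arbitrary $t_0$ by time translation. The main obstacle I anticipate is bookkeeping in the quadrilinear $\lambda$-terms: one must correctly handle the ``Fractional Leibniz Rule'' when the $\lambda$-factor sits on the frequency that has been merged into $n_{123}$ or $n_{234}$ etc. (so that $\theta$ is evaluated at a sum of frequencies), and check that in every placement of the $\lambda$-factor the two highest frequencies — which carry the critical $\theta(N_1^*)\theta(N_2^*)$ weight — can be matched with $\mathcal{D}v$-factors estimated in $X^{0,\frac12+}$, rather than being forced onto the $\lambda$-factor; the smoothness and boundedness of $\lambda$ is exactly what makes this work, since $\lambda$ never needs to absorb a derivative. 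The reason the exponent here is $N^{-1+}$ rather than the $N^{-2+}$ of the Hartree case is precisely the single surviving trilinear term, which has no analogue in \eqref{eq:Hartree}; this is the source of the weaker $(1+|t|)^{s+}$ bound in Theorem \ref{Theorem 3} compared to Theorem \ref{Theorem 2}.
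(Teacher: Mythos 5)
Your proposal identifies exactly the same three contributions as the paper — the sextilinear $\lambda_6(M_6;v)$ and the quadrilinear $\lambda$-terms each giving $N^{-2+}$, and the trilinear remainder $\sum_{n_1+n_2+n_3=0}((\theta(n_1))^2-(\theta(n_2))^2)\widehat{u}(n_1)\widehat{\bar{u}}(n_2)\widehat{\lambda}(n_3)$ as the dominant $N^{-1+}$ contribution — and the tools are the same: the pointwise bound \eqref{eq:M_4potentialpointwisebound} together with the Fractional Leibniz Rule for the first two terms, and cancellation in $\theta$ plus the smoothness of $\lambda$ for the last. One caution on the trilinear term: the unified estimate $(\theta(n_1))^2-(\theta(n_2))^2=O(N_1^{-1}\theta(N_1)\theta(N_2)N_3)$ you propose does \emph{not} hold on the whole sum; when $N_3\sim N_1^*$ the frequency $n_2$ can be small, $\theta(N_2)=1$, and the right-hand side is then too small by a factor $\theta(N_1)$. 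You implicitly patch this with the ``rapidly decaying factor in $N_3$'' coming from $\widehat{\lambda}$, which is the right idea, but the paper makes it explicit by splitting into the cases $N_1^*\sim N_3$ (rapid decay of $\widehat{\lambda}$ alone suffices, no cancellation needed), $N_1^*\gg N_3$ with $N_3\le(N_1^*)^\epsilon$ (the mean-value-theorem cancellation bound $|(\theta(n_1))^2-(\theta(n_2))^2|\lesssim(N_1^*)^{-(1-\epsilon)}\theta(N_1)\theta(N_2)$), and $N_1^*\gg N_3$ with $N_3>(N_1^*)^\epsilon$ (smoothness with $M$ large so $\epsilon M\ge1$); this is the clean way to organize the argument, since the naive unified bound requires both $N_1\sim N_2$ and $N_1,N_2>N$, which is only guaranteed when $N_3$ is genuinely small compared to $N_1^*$. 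With this bookkeeping fixed, your proof is the paper's proof.
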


Arguing as in the proof of Theorem \ref{Theorem 2}, Theorem \ref{Theorem 3} will then follow immediately from Lemma \ref{E2ip}. We now prove Lemma \ref{E2ip}.

\begin{proof}
As before, it suffices to consider $t_0=0$.
We have to consider three possible types of terms that come from integrating over $[0,\delta]$ the right hand side of $(\ref{eq:potentialcontribution1})$.

1) By a slight modification of our work on the Hartree Equation, we have:

\begin{equation}
\label{eq:6linearpotential}
|\int_0^{\delta} \lambda_6(M_6;u)dt| \lesssim \frac{1}{N^{2-}} E^2(\Phi)
\end{equation}

\vspace{3mm}

2) In order to estimate the time integral of the quadrilinear term on the right hand side of (\ref{eq:potentialcontribution1}), it suffices to estimate:

$$|\int_0^{\delta} \sum_{n_1+n_2+n_3+n_4=0}M_4(n_1,n_2,n_3,n_4)(\lambda u)\,\widehat{}\,(n_1)\widehat{\bar{u}}(n_2)
\widehat{u}(n_3)\widehat{\bar{u}}(n_4)dt|$$
Here $M_4$ is the multiplier we defined in (\ref{eq:definitionofM4potential}).
Let $v$ be as in Proposition \ref{Proposition 4.4}, and let $\mu(x,t)=f(t)\lambda(x)$ be as in the proof of Proposition \ref{Proposition 4.4}. Let $\chi=\chi(t):=\chi_{[0,\delta]}(t)$. Then, we want to estimate:

$$\big|\int_{\mathbb{R}} \sum_{n_1+n_2+n_3+n_4=0}M_4(n_1,n_2,n_3,n_4)(\chi \mu v)\,\widehat{}\,(n_1)\widehat{\bar{v}}(n_2)
\widehat{v}(n_3)\widehat{\bar{v}}(n_4)dt\big|$$
Let $N_1,N_2,N_3,N_4$ be dyadic integers. We define:

$$I_{N_1,N_2,N_3,N_4}:=$$
$$\big| \int_{\mathbb{R}} \sum_{n_1+n_2+n_3+n_4=0}M_4(n_1,n_2,n_3,n_4)\widehat{(\chi \mu v)}_{N_1}\,(n_1)\widehat{\bar{v}}_{N_2}(n_2)
\widehat{v}_{N_3}(n_3)\widehat{\bar{v}}_{N_4}(n_4)dt \big|$$
\vspace{2mm}
$$\sim \big| \sum_{n_1+n_2+n_3+n_4=0} \int_{\tau_1+\tau_2+\tau_3+\tau_4=0} d\tau_j$$
\vspace{2mm}
$$\{M_4(n_1,n_2,n_3,n_4)\widetilde{(\chi \mu v)}_{N_1}\,(n_1,\tau_1)\widetilde{\bar{v}}_{N_2}(n_2,\tau_2)
\widetilde{v}_{N_3}(n_3,\tau_3)\widetilde{\bar{v}}_{N_4}(n_4,\tau_4)\}\big|$$
\vspace{2mm}
$$\leq \sum_{n_1+n_2+n_3+n_4=0} \int_{\tau_1+\tau_2+\tau_3+\tau_4=0} d\tau_j$$
\vspace{2mm}
$$\{|M_4(n_1,n_2,n_3,n_4)||\widetilde{(\chi \mu v)}_{N_1}\,(n_1,\tau_1)||\widetilde{\bar{v}}_{N_2}(n_2,\tau_2)|
|\widetilde{v}_{N_3}(n_3,\tau_3)||\widetilde{\bar{v}}_{N_4}(n_4,\tau_4)|\}$$

\vspace{3mm}

We define the dyadic integers $N_j^*$ as before. By using the fact that we are summing over the set where $n_1+n_2+n_3+n_4=0$, and by using the definition of $M_4$, we know that:

\begin{equation}
\label{eq:Njstarpotential}
N_1^* \gtrsim N,\,N_1^* \sim N_2^*
\end{equation}

\vspace{2mm}

We will consider the case when:

$$N_1=N_1^*,N_2=N_2^*,N_3=N_3^*,N_4=N_4^*.$$
The other cases are similar. Namely, in the other cases, we use the Fractional Leibniz Rule differently, as we did in order to bound the term $K$ occurring in (\ref{eq:3zvijezde}).

From $(\ref{eq:M_4potentialpointwisebound})$, it follows that:

$$I_{N_1,N_2,N_3,N_4} \lesssim \sum_{n_1+n_2+n_3+n_4=0} \int_{\tau_1+\tau_2+\tau_3+\tau_4=0} d\tau_j \frac{1}{(N_1^*)^2}
\theta(N_1^*)\theta(N_2^*)N_3^*N_4^*$$
\vspace{2mm}
$$|\widetilde{(\chi \mu v)}_{N_1}\,(n_1,\tau_1)||\widetilde{\bar{v}}_{N_2}(n_2,\tau_2)|
|\widetilde{v}_{N_3}(n_3,\tau_3)||\widetilde{\bar{v}}_{N_4}(n_4,\tau_4)|$$
\vspace{2mm}
$$\lesssim \sum_{n_0+n_1+n_2+n_3+n_4=0} \int_{\tau_0+\tau_1+\tau_2+\tau_3+\tau_4=0} d\tau_j \frac{1}{(N_1^*)^2}
\theta(n_0+n_1)\theta(N_2^*)N_3^*N_4^*$$
\vspace{2mm}
$$|\widetilde{(\chi \mu)}\,(n_0,\tau_0)||\widetilde{v}(n_1,\tau_1)||\widetilde{\bar{v}}_{N_2}(n_2,\tau_2)|
|\widetilde{v}_{N_3}(n_3,\tau_3)||\widetilde{\bar{v}}_{N_4}(n_4,\tau_4)|$$
\vspace{2mm}
$$\lesssim \sum_{n_0+n_1+n_2+n_3+n_4=0} \int_{\tau_0+\tau_1+\tau_2+\tau_3+\tau_4=0} d\tau_j
\frac{1}{(N_1^*)^{2-}}(\theta(n_0)+\theta(n_1))$$
$$|\widetilde{(\chi \mu)}\,(n_0,\tau_0)|
|\widetilde{v}(n_1,\tau_1)|(\frac{1}{(N_2^*)^{0+}}|\widetilde{(\mathcal{D}\bar{v})}_{N_2}(n_2,\tau_2)|)
(\frac{1}{(N_3^*)^{0+}}|\widetilde{\nabla v}_{N_3}(n_3,\tau_3)|)(\frac{1}{(N_4^*)^{0+}}|\widetilde{\nabla \bar{v}}_{N_4}(n_4,\tau_4)|)$$
\vspace{2mm}
$$\lesssim I_{N_1,N_2,N_3,N_4}^{1}+I_{N_1,N_2,N_3,N_4}^{2}$$
Here:

$$I_{N_1,N_2,N_3,N_4}^{1}:=\sum_{n_0+n_1+n_2+n_3+n_4=0} \int_{\tau_0+\tau_1+\tau_2+\tau_3+\tau_4=0} d\tau_j
\frac{1}{(N_1^*)^{2-}}$$
$$|\widetilde{(\chi \mu)}\,(n_0,\tau_0)|
|\widetilde{\mathcal{D}v}(n_1,\tau_1)|(\frac{1}{(N_2^*)^{0+}}|\widetilde{(\mathcal{D}\bar{v})}_{N_2}(n_2,\tau_2)|)
(\frac{1}{(N_3^*)^{0+}}|\widetilde{\nabla v}_{N_3}(n_3,\tau_3)|)(\frac{1}{(N_4^*)^{0+}}|\widetilde{\nabla \bar{v}}_{N_4}(n_4,\tau_4)|)$$
and:

$$I_{N_1,N_2,N_3,N_4}^{2}:=\sum_{n_0+n_1+n_2+n_3+n_4=0} \int_{\tau_0+\tau_1+\tau_2+\tau_3+\tau_4=0} d\tau_j
\frac{1}{(N_1^*)^{2-}}$$
$$|\widetilde{(\chi \mathcal{D}\mu)}\,(n_0,\tau_0)|
|\widetilde{v}(n_1,\tau_1)|(\frac{1}{(N_2^*)^{0+}}|\widetilde{(\mathcal{D}\bar{v})}_{N_2}(n_2,\tau_2)|)
(\frac{1}{(N_3^*)^{0+}}|\widetilde{\nabla v}_{N_3}(n_3,\tau_3)|)(\frac{1}{(N_4^*)^{0+}}|\widetilde{\nabla \bar{v}}_{N_4}(n_4,\tau_4)|)$$
We estimate $I_{N_1,N_2,N_3,N_4}^{1}$. The expression $I_{N_1,N_2,N_3,N_4}^{2}$ is estimated analogously.

\vspace{3mm}

Suppose $F_j:j=0,1,2,3,4$ are such that:

\vspace{3mm}

$$\widetilde{F_0}=|\widetilde{(\chi \mu)}|,\widetilde{F_1}=|\widetilde{\mathcal{D}v}| ,\widetilde{F_2}=|\widetilde{(\mathcal{D}\bar{v})}_{N_2}|,\widetilde{F_3}=|\widetilde{\nabla v}_{N_3}|,
\widetilde{F_4}=|\widetilde{\nabla v}_{N_4}|$$
By Parseval's identity, and then by H\"{o}lder's inequality, we deduce:

$$I_{N_1,N_2,N_3,N_4}^{1}\lesssim \frac{1}{N_1^{2-}N_2^{0+}N_3^{0+}N_4^{0+}} \int \int F_0 F_1 \bar{F_2} F_3 \bar{F_4} dxdt$$
\vspace{2mm}
$$\leq \frac{1}{N_1^{2-}N_2^{0+}N_3^{0+}N_4^{0+}} \|F_0\|_{L^4_{t,x}} \|F_1\|_{L^4_{t,x}}\|F_2\|_{L^6_{t,x}}\|F_3\|_{L^6_{t,x}}\|F_4\|_{L^6_{t,x}}$$
By using $(\ref{eq:strichartztorus})$, $(\ref{eq:strichartztorus2})$, and the construction of the functions $F_j$, this expression is:

$$\lesssim \frac{1}{N_1^{2-}N_2^{0+}N_3^{0+}N_4^{0+}} \|F_0\|_{X^{0,\frac{3}{8}}} \|F_1\|_{X^{0,\frac{3}{8}}}\|F_2\|_{X^{0+,\frac{1}{2}+}}\|F_3\|_{X^{0+,\frac{1}{2}+}}\|F_4\|_{X^{0+,\frac{1}{2}+}}$$
\vspace{2mm}
$$=\frac{1}{N_1^{2-}} \|\chi \mu\|_{X^{0,\frac{3}{8}}} \|\mathcal{D}v\|_{X^{0,\frac{3}{8}}}
(\frac{1}{N_2^{0+}}\|\mathcal{D}v_{N_2}\|_{X^{0+,\frac{1}{2}+}})
(\frac{1}{N_3^{0+}}\|\nabla v_{N_3}\|_{X^{0+,\frac{1}{2}+}})
(\frac{1}{N_4^{0+}}\|\nabla v_{N_4}\|_{X^{0+,\frac{1}{2}+}})$$
Using Lemma \ref{Lemma 2.1}, and Proposition \ref{Proposition 4.4}, we deduce that this expression is:

$$\lesssim \frac{1}{N_1^{2-}} \|\mu\|_{X^{0,\frac{1}{2}+}} \|\mathcal{D}v\|_{X^{0,\frac{1}{2}+}}^2
\|\nabla v\|_{X^{0,\frac{1}{2}+}}^2$$
By the smoothness of $\mu$, this is:
\vspace{2mm}
$$\lesssim \frac{1}{N_1^{2-}} \|\mathcal{D} \Phi\|_{L^2}^2= \frac{1}{N_1^{2-}} E^1(\Phi)$$
\vspace{3mm}
By $(\ref{eq:E1E2equivalencepotential})$, we obtain that the above term is:

$$\lesssim \frac{1}{N_1^{2-}}E^2(\Phi).$$

\vspace{2mm}

An analogous argument shows that $I_{N_1,N_2,N_3,N_4}^2$ is bounded by the same quantity.

Hence:

$$I_{N_1,N_2,N_3,N_4}\lesssim \frac{1}{N_1^{2-}}E^2(\Phi)$$
We sum in the $N_j$ and use $(\ref{eq:Njstarpotential})$ to deduce that:

$$|\int_0^{\delta}\sum_{n_1+n_2+n_3+n_4=0}M_4\big((\lambda u)\,\widehat{}\,(n_1) \widehat{\bar{u}}(n_2)\widehat{u}(n_3)\widehat{\bar{u}}(n_4)
-\widehat{u}(n_1) (\lambda \bar{u})\,\widehat{}\,(n_2)\widehat{u}(n_3)\widehat{\bar{u}}(n_4)$$
\vspace{2mm}
$$+\widehat{u}(n_1) \widehat{\bar{u}}(n_2)(\lambda u)\,\widehat{}\,(n_3)\widehat{\bar{u}}(n_4)-(\lambda u)\,\widehat{}\,(n_1) \widehat{\bar{u}}(n_2)\widehat{u}(n_3)(\lambda \bar{u})\,\widehat{}\,(n_4) \big)dt|$$
\begin{equation}
\label{eq:4linearpotential}
\lesssim \frac{1}{N^{2-}} E^2(\Phi).
\end{equation}

\vspace{3mm}

3) We now estimate the time integral of the bilinear term on the right hand side of (\ref{eq:potentialcontribution1}).
Namely, we bound:

$$\big| \int_0^{\delta} \sum_{n_1+n_2+n_3=0} \big((\theta(n_1))^2-(\theta(n_2))^2\big) \widehat{u}(n_1) \widehat{\bar{u}}(n_2)
\widehat{\lambda}(n_3)dt \big|=$$
\vspace{2mm}
$$\big| \int_{\mathbb{R}} \sum_{n_1+n_2+n_3=0} \big((\theta(n_1))^2-(\theta(n_2))^2\big) (\chi v)\,\widehat{}\,(n_1) \widehat{\bar{v}}(n_2)
\widehat{\mu}(n_3)dt \big|\sim$$
\vspace{2mm}
$$\sim \big| \sum_{n_1+n_2+n_3=0} \int_{\tau_1+\tau_2+\tau_3=0} d\tau_j \big((\theta(n_1))^2-(\theta(n_2))^2\big) (\chi v)\,\widetilde{}\,(n_1,\tau_1) \widetilde{\bar{v}}(n_2,\tau_2)
\widetilde{\mu}(n_3,\tau_3) \big|$$
Given dyadic integers $N_1,N_2,N_3$, we define:

$$J_{N_1,N_2,N_3}:=\sum_{n_1+n_2+n_3=0} \int_{\tau_1+\tau_2+\tau_3=0} d\tau_j |(\theta(n_1))^2-(\theta(n_2))^2| |\widetilde{(\chi v)}_{N_1}(n_1,\tau_1)|| \widetilde{\bar{v}}_{N_2}(n_2,\tau_2)|
|\widetilde{\mu}_{N_3}(n_3,\tau_3)|$$
Let's order the frequencies as before to obtain:

$$N_1^* \geq N_2^* \geq N_3^*.$$
By construction of $\theta$, and by the fact that we are integrating over $n_1+n_2+n_3=0$, we again have that:

\begin{equation}
\label{eq:Njstarpotential2}
N_1^* \gtrsim N, N_1^* \sim N_2^*
\end{equation}
We now consider two cases, depending on the relationship between $N_1^*$ and $N_3$.

\vspace{3mm}

\textbf{Case 1:}\,$N_1^* \sim N_3.$

\vspace{2mm}

In this case, one has:

$$(\theta(n_1))^2-(\theta(n_2))^2=O((\theta(N_3))^2).$$
We find $G_1,G_2,G_3$ such that:

$$\widetilde{G_1}=|\widetilde{(\chi v)}_{N_1}|, \widetilde{G_2}=|\widetilde{v}_{N_2}|,\widetilde{G_3}=
|\widetilde{(\mathcal{D}^2 \mu)}_{N_3}|.$$
So:

$$J_{N_1,N_2,N_3} \lesssim $$
\vspace{2mm}
$$\sum_{n_1+n_2+n_3=0} \int_{\tau_1+\tau_2+\tau_3=0}d\tau_j| \widetilde{(\chi v)}_{N_1}(n_1,\tau_1)||\widetilde{\bar{v}}_{N_2}(n_2,\tau_2)|
(\theta(N_3))^2|\widetilde{\mu}_{N_3}(n_3,\tau_3)|$$
\vspace{2mm}
$$\lesssim \sum_{n_1+n_2+n_3=0} \int_{\tau_1+\tau_2+\tau_3=0}d\tau_j| \widetilde{(\chi v)}_{N_1}(n_1,\tau_1)||\widetilde{\bar{v}}_{N_2}(n_2,\tau_2)|
|\widetilde{(\mathcal{D}^2\mu)}_{N_3}(n_3,\tau_3)|$$
\vspace{2mm}
$$\sim \int \int G_1 \overline{G_2} G_3 dxdt \leq \|G_1\|_{L^4_{t,x}} \|G_2\|_{L^4_{t,x}}\|G_3\|_{L^2_{t,x}}$$
\vspace{2mm}
$$\lesssim \|G_1\|_{X^{0,\frac{3}{8}}}\|G_2\|_{X^{0,\frac{3}{8}}}\|G_3\|_{X^{0,0}}
=\|(\chi v)_{N_1}\|_{X^{0,\frac{3}{8}}}\|v_{N_2}\|_{X^{0,\frac{3}{8}}}\|(\mathcal{D}^2 \mu)_{N_3}\|_{X^{0,0}}$$
\vspace{2mm}
$$\lesssim \|v\|_{X^{0,\frac{1}{2}+}}^2 \frac{1}{N_3^M}\|\mathcal{D}^2 \mu\|_{X^{M,0}}\lesssim \frac{1}{(N_1^*)^M}
\|\mathcal{D}v\|_{X^{0,\frac{1}{2}+}}^2$$
The previous bound holds for all $M>0$, by the smoothness properties of $\mu$.
From $(\ref{eq:E1E2equivalencepotential})$, we have that the contribution from Case 1 is, in particular:

\begin{equation}
\label{eq:2linearpotentialcase1}
\lesssim \frac{1}{N_1^*}E^2(\Phi)
\end{equation}

\vspace{3mm}

\textbf{Case 2:}\,$N_1^* \gg N_3.$

\vspace{2mm}

\textbf{Subcase 1:}\,$N_3 \leq (N_1^*)^{\epsilon}$ ($\epsilon>0$ is small).

\vspace{2mm}

We recall from $(\ref{eq:numerator12})$ that:

$$|(\theta(x))^2-(\theta(y))^2|\leq \frac{1}{N^{2s}}||x|^{2s}-|y|^{2s}|.$$
\vspace{2mm}
Now, in this subcase:

$$|n_1|\sim |n_2|\sim N_1^*,||n_1|-|n_2||=O((N_1^*)^{\epsilon})$$
So, by the Mean Value Theorem:

$$||n_1|^{2s}-|n_2|^{2s}|\lesssim (N_1^*)^{2s-1}(N_1^*)^{\epsilon}$$
Consequently:

$$|(\theta(n_1))^2-(\theta(n_2))^2|\lesssim \frac{1}{(N_1^*)^{1-\epsilon}}\theta(N_1)\theta(N_2).$$
With notation as in Case 1, we obtain that:

$$J_{N_1,N_2,N_3}\lesssim $$
\vspace{2mm}
$$\sum_{n_1+n_2+n_3=0} \int_{\tau_1+\tau_2+\tau_3=0}d\tau_j
\frac{1}{(N_1^*)^{1-\epsilon}}|\widetilde{(\chi \mathcal{D}v)}_{N_1}(n_1,\tau_1)||\widetilde{\mathcal{D}\bar{v}}_{N_2}(n_2,\tau_2)|
|\widetilde{\mu}_{N_3}(n_3,\tau_3)|$$
We now argue, using an $L^4_{t,x},L^4_{t,x},L^2_{t,x}$ H\"{o}lder inequality as in Case 1, to deduce:
\vspace{2mm}
\begin{equation}
\label{eq:2linearpotentialcase21}
J_{N_1,N_2,N_3}\lesssim \frac{1}{(N_1^*)^{1-\epsilon}}\|\mathcal{D}v\|_{X^{0,\frac{1}{2}+}}^2 \|\mu\|_{X^{0,0}} \lesssim
\frac{1}{(N_1^*)^{1-\epsilon}}E^2(\Phi)
\end{equation}

\vspace{2mm}

\textbf{Subcase 2:}\,$N_3>(N_1^*)^{\epsilon}$(for the same $\epsilon>0$ as before).

\vspace{2mm}

In this subcase, we estimate $|(\theta(n_1))^2-(\theta(n_2))^2| \lesssim \theta(N_1) \theta(N_2)$, and hence:

$$J_{N_1,N_2,N_3}\lesssim $$
\vspace{2mm}
$$\sum_{n_1+n_2+n_3=0} \int_{\tau_1+\tau_2+\tau_3=0}d\tau_j |\widetilde{(\chi \mathcal{D}v)}_{N_1}(n_1,\tau_1)||\widetilde{\mathcal{D}\bar{v}}_{N_2}(n_2,\tau_2)|
|\widetilde{\mu}_{N_3}(n_3,\tau_3)|$$
We now argue similarly as in Case 1 to deduce that for all $M>0$:

$$J_{N_1,N_2,N_3}\lesssim \frac{1}{(N_1^*)^{\epsilon M}} \|\mathcal{D}v\|_{X^{0,\frac{1}{2}+}}^2.$$
Hence, if we choose $M$ sufficiently large so that $\epsilon M \geq 1$, we obtain:

\begin{equation}
\label{eq:2linearpotentialcase22}
J_{N_1,N_2,N_3}\lesssim \frac{1}{N_1^*}E^2(\Phi).
\end{equation}

\vspace{2mm}

Combining $(\ref{eq:2linearpotentialcase1})$, $(\ref{eq:2linearpotentialcase21})$, $(\ref{eq:2linearpotentialcase22})$, and summing in the $N_j$, we obtain that:

\vspace{2mm}

\begin{equation}
\label{eq:2linearpotential}
|\int_0^{\delta}\sum_{n_1+n_2+n_3=0}((\theta(n_1))^2-(\theta(n_2))^2)\widehat{u}(n_1)\widehat{\bar{u}}(n_2)\widehat{\lambda}(n_3) dt|\lesssim \frac{1}{N^{1-}}E^2(\Phi).
\end{equation}

\vspace{2mm}

Lemma \ref{E2ip} now follows from $(\ref{eq:6linearpotential})$, $(\ref{eq:4linearpotential})$, and $(\ref{eq:2linearpotential})$.

\end{proof}

\begin{remark}
\label{Remark 4.4}
If $\lambda$ is a constant function, then $u=e^{-i\lambda t}v$, where $v$ is a solution to the cubic $NLS$. Since $\|v(t)\|_{H^s}$ is then uniformly bounded in time, the same holds for $\|u(t)\|_{H^s}$.
If $\lambda$ depends on $x$, one can't argue in this way.
\end{remark}

\begin{remark}
\label{Remark 4.5}
Heuristically, the reason why we get a weaker bound for $(\ref{eq:potentialcubicnls})$ than we did for $(\ref{eq:Hartree})$ is the fact that we have bilinear terms which occur in $\frac{d}{dt}E^2(u)$. Hence, the derivatives have to be distributed among fewer factors of $u$ and $\bar{u}$ than there were before.
\end{remark}

\subsection{Modification 3: Defocusing Cubic NLS with Inhomogeneous Nonlinearity.}

\vspace{2mm}

We now consider the equation $(\ref{eq:inhomogeneouscubicnls})$. The equation $(\ref{eq:inhomogeneouscubicnls})$ has conserved mass. By integration by parts, one can check that energy:

$$E(u(t)):=\frac{1}{2} \int |\nabla u(x,t)|^2 dx + \frac{1}{4} \int \lambda(x) |u(x,t)|^4 dx$$
is conserved in time. Both quantities are continuous on $H^1$. Since $\lambda \geq 0$, conservation of mass and energy gives us uniform bounds on $\|u(t)\|_{H^1}$.

\vspace{2mm}

\subsubsection{Local-in-time estimates for $(\ref{eq:inhomogeneouscubicnls})$}

Let $u$ be a global solution to $(\ref{eq:inhomogeneouscubicnls})$. Let us observe the following fact:

\begin{proposition}
\label{Proposition 4.5}
Given $t_0 \in \mathbb{R}$, there exists a globally defined function $v:S^1 \times \mathbb{R} \rightarrow \mathbb{C}$ satisfying the properties:

\begin{equation}
\label{eq:properties of v14}
v|_{[t_0,t_0+\delta]}=u|_{[t_0,t_0+\delta]}.
\end{equation}

\begin{equation}
\label{eq:properties of v24}
\|v\|_{X^{1,\frac{1}{2}+}}\leq C(s,E(\Phi),M(\Phi))
\end{equation}

\begin{equation}
\label{eq:properties of v34}
\|\mathcal{D}v\|_{X^{0,\frac{1}{2}+}}\leq C(s,E(\Phi),M(\Phi)) \|\mathcal{D}u(t_0)\|_{L^2}.
\end{equation}
Moreover, $\delta$ and $C$ can be chosen to depend continuously on the energy and mass.
\end{proposition}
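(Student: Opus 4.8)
The plan is to follow the same template as the proofs of Propositions~\ref{Proposition 3.1}, \ref{Proposition 4.1}, and \ref{Proposition 4.4}: run a contraction in $X^{1,\frac{1}{2}+}$ for the Duhamel map associated to $(\ref{eq:inhomogeneouscubicnls})$ on a short interval $[0,\delta]$ (we may take $t_0=0$ by time translation, all constants being time-independent), and then propagate the $\mathcal{D}$-bound through that same fixed point. Exactly as in the proof of Proposition~\ref{Proposition 4.4}, I would first replace the coefficient $\lambda(x)$ by $\mu(x,t):=f(t)\lambda(x)$, with $f\in C_0^\infty(\mathbb{R})$ and $f\equiv 1$ on $[0,1]\supseteq[0,\delta]$; then $\mu\equiv\lambda$ on the relevant time interval, and the joint smoothness of $f$ and $\lambda$ gives $\|\mu\|_{X^{s',b'}}\lesssim_{s',b'}1$ and $\|\mathcal{D}\mu\|_{X^{s',b'}}\lesssim_{s',b'}1$ for all $s',b'$ (the latter since $\theta(n)\lesssim\langle n\rangle^s$). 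With the linear and Duhamel estimates of Appendix~B, the construction reduces to the nonlinear bound
$$\|\mu\,v_\delta\,\bar v_\delta\,v_\delta\|_{X^{s,b-1}}\lesssim\delta^{r_0}\|v\|_{X^{s,b}}^3,\qquad b=\tfrac{1}{2}+,\ r_0>0,$$
together with its $\mathcal{D}$-analogue.

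To prove this bound I would dualize against $c=c(n,\tau)$ with $\|c\|_{L^2_\tau\ell^2_n}\le 1$, obtaining a five-fold product over $n_0+n_1+n_2+n_3=n$ (with $n_0$ dual to $\mu$ and the factor $\langle\tau-n^2\rangle^{b-1}$ absorbed into $c$), and distribute the weight $\langle n\rangle^s$ by the Fractional Leibniz inequality $\langle n_0+n_1+n_2+n_3\rangle^s\lesssim\langle n_0\rangle^s+\langle n_1\rangle^s+\langle n_2\rangle^s+\langle n_3\rangle^s$. When $\langle n\rangle^s$ lands on $\mu$ it is absorbed by $\|\mu\|_{X^{s,\frac{3}{8}}}\lesssim 1$; when it lands on a factor $v_\delta$, the resulting term is of the type handled for the cubic and quintic equations in Proposition~\ref{Proposition 3.1}. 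Passing to physical space and applying an $L^4_{t,x},L^4_{t,x},L^4_{t,x},L^4_{t,x},L^\infty_{t,x}$ H\"older inequality, the Strichartz estimate $(\ref{eq:strichartztorus})$, and the embedding $(\ref{eq:Linftytx1})$, controls all factors; the gain $\delta^{r_0}$ comes, as in Appendix~B, from the $\chi_\delta$ truncation in the Duhamel map together with Lemma~\ref{Lemma 2.1}. Choosing $\delta$ small, depending only on the $H^1$-size of $u(0)$ and hence (by conservation of mass and energy) only on $E(\Phi),M(\Phi)$, closes the contraction and gives $(\ref{eq:properties of v14})$ and $(\ref{eq:properties of v24})$.

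For $(\ref{eq:properties of v34})$ I would apply $\mathcal{D}$ to the Duhamel identity for $v$ and use the Leibniz inequality for $\theta$, namely $\theta(n)\lesssim\theta(n_0)+\theta(n_1)+\theta(n_2)+\theta(n_3)$. The branches in which $\theta$ hits a factor $v_\delta$ produce one $\mathcal{D}v_\delta$ and are estimated as above, with two of the remaining factors placed in $X^{1,\frac{1}{2}+}$ (each $\lesssim\|v\|_{X^{1,\frac{1}{2}+}}\lesssim 1$) and one in $X^{0,\frac{1}{2}+}$. The genuinely new --- and only delicate --- point is the branch in which $\theta$ hits $\mu$: one then gets $\mathcal{D}\mu$ (harmless, as $\|\mathcal{D}\mu\|_{X^{s',b'}}\lesssim 1$) multiplied by three factors of $v_\delta$ and \emph{no} factor of $\mathcal{D}v_\delta$, so to keep the estimate proportional to $\|\mathcal{D}u(0)\|_{L^2}$ one must use $\theta(n)\ge 1$, which yields $\|v_\delta\|_{X^{0,b'}}\le\|\mathcal{D}v_\delta\|_{X^{0,b'}}$, and then choose the H\"older grouping so that exactly one of the three $v_\delta$'s sits in an $X^{0,\cdot}$-type norm (hence is controlled by $\|\mathcal{D}v_\delta\|_{X^{0,\frac{1}{2}+}}$) while the other two sit in $X^{1,\cdot}$-type norms (hence are $O(1)$). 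This is precisely the device already used for the linear potential term $\lambda u$ in Proposition~\ref{Proposition 4.4}. The result is an estimate
$$\|\mathcal{D}v\|_{X^{0,\frac{1}{2}+}}\lesssim\|\mathcal{D}u(0)\|_{L^2}+\delta^{r_0}\,C(\lambda)\,\|\mathcal{D}v\|_{X^{0,\frac{1}{2}+}},$$
and shrinking $\delta$ gives $(\ref{eq:properties of v34})$.

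The uniqueness statement implicit in $(\ref{eq:properties of v14})$ then follows from Gronwall's inequality exactly as in Propositions~\ref{Proposition 4.1} and \ref{Proposition 4.4}: for two solutions $v,w$ of $(\ref{eq:inhomogeneouscubicnls})$ on $[0,\delta]$ with the same initial data and uniformly bounded $H^1$ norms, Minkowski's inequality, unitarity of $S(t)$, and the bound $\|\lambda(|v|^2v-|w|^2w)\|_{L^2}\lesssim\|\lambda\|_{L^\infty}\big(\|v\|_{H^1}^2+\|w\|_{H^1}^2\big)\|v-w\|_{L^2}$ (using $H^1(S^1)\hookrightarrow L^\infty(S^1)$) give $\|v(t)-w(t)\|_{L^2}\lesssim\int_0^t\|v(t')-w(t')\|_{L^2}\,dt'$, so $v\equiv w$ on $[0,\delta]$. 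In summary, the main obstacle is not a single hard estimate but the bookkeeping above: ensuring that in every Leibniz branch the weight that fails to land on a factor of $v$ is paid for by the smoothness of $\lambda$, and that at most one $v_\delta$-factor per term remains at the low regularity level, so that the $\mathcal{D}$-estimate stays proportional to $\|\mathcal{D}u(t_0)\|_{L^2}$.
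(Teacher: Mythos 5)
Your proposal is correct and follows exactly the template the paper intends (the paper simply declares the proof ``analogous to Proposition \ref{Proposition 3.1}'' and omits it), combining the contraction argument of Appendix B with the $\mu(x,t)=f(t)\lambda(x)$ device and duality bookkeeping already used for $(\ref{eq:potentialcubicnls})$ in Proposition \ref{Proposition 4.4}. Your identification of the one genuinely new branch --- where $\theta$ lands on $\mu$, leaving no $\mathcal{D}v_\delta$ factor --- and the fix via $\theta\ge 1$ (so $\|v_\delta\|_{X^{0,b'}}\le\|\mathcal{D}v_\delta\|_{X^{0,b'}}$) is exactly the point that ``analogous to Proposition \ref{Proposition 3.1}'' glosses over, and it is handled correctly.
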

The proof of Proposition \ref{Proposition 4.5} is analogous to the proof of Proposition \ref{Proposition 3.1}, so we omit the details.

\vspace{2mm}

\subsubsection{Estimate on the increment of $E^1(u)$ for $(\ref{eq:inhomogeneouscubicnls})$ and proof of Theorem \ref{Theorem 4}}

The presence of the inhomogeneity $\lambda$ in the nonlinearity makes it impossible to use $E^2$, as in the case of the previous two equations. The difficulty lies in the fact that the numerators we obtain in the correction terms no longer factorize, so we can't obtain bounds such as $(\ref{eq:psipointwisebound})$. This is analogous to the situation that occurs for the quintic and higher order NLS. For details, see Remark \ref{Remark 4.3}. Hence, we have to work with $E^1$. Theorem \ref{Theorem 4} will follow if we prove that:

\begin{lemma}
\label{inhomogeneousE1increment}
For all $t_0 \in \mathbb{R}$, one has:
$$|E^1(u(t_0+\delta))-E^1(t_0)|\lesssim \frac{1}{N^{\frac{1}{2}-}}E^1(\Phi)$$
\end{lemma}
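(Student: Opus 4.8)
The plan is to mimic the proof of Lemma \ref{Bigstar1} for the quintic equation, since the inhomogeneous cubic NLS with nonlinearity $\lambda |u|^2 u$ leads, after computing $\frac{d}{dt}\|\mathcal{D}v(t)\|_{L^2}^2$ and symmetrizing as in \cite{CKSTT2}, to a quadrilinear expression rather than a sextilinear one. First I would reduce to $t_0 = 0$ by time translation and pass to the globally defined function $v$ agreeing with $u$ on $[0,\delta]$ supplied by Proposition \ref{Proposition 4.5}, so that all the $X^{s,b}$ bounds \eqref{eq:properties of v24}, \eqref{eq:properties of v34} are available and $v(t)\in H^\infty$ makes the formal computation legitimate. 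Writing $v_t = i\Delta v - i\lambda v\bar v v$ and using $\mathrm{Re}\,\langle i\mathcal{D}\Delta v, \mathcal{D}v\rangle = 0$, the increment becomes
$$\|\mathcal{D}u(\delta)\|_{L^2}^2 - \|\mathcal{D}u(0)\|_{L^2}^2 = -2\,\mathrm{Re}\int_0^\delta \langle i\mathcal{D}(\lambda v\bar v v), \mathcal{D}v\rangle\, dt.$$
Since $\lambda\in C^\infty(S^1)$, on the Fourier side the factor $\widehat\lambda$ decays faster than any polynomial, which plays a role analogous to the $\widehat V$ factor in the Hartree analysis; expanding $\lambda$ in Fourier series and symmetrizing in the three $v$-frequencies $n_1,n_3,n_5$ (say) yields an expression of the schematic form $\int_0^\delta \lambda_{4,\lambda}\big((\theta(n_1))^2 + (\theta(n_3))^2 + (\theta(n_5))^2 - (\theta(n_2))^2 - (\theta(n_4))^2 - (\theta(n_6))^2;\, v\big)\,dt$ where one variable is tied to the smooth inhomogeneity and the multiplier $\widehat\lambda(n_\lambda)$ supplies arbitrary polynomial decay in that frequency.

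Next I would perform the Littlewood–Paley decomposition exactly as in Big Case 1 and Big Case 2 of the proof of Lemma \ref{Bigstar1}: localize the six factors to dyadic annuli $|n_j|\sim N_j$, identify the two largest frequencies $|n_a|\sim N_1 \sim N_2 \gtrsim N$ (the $\sim$ and the lower bound $N$ coming from the zero-sum constraint and the definition of $\theta$), and split according to whether the two top $(\theta)^2$ factors enter with opposite sign (Big Case 1) or the same sign (Big Case 2). In Big Case 1 one uses the cancellation bound $M = O(N_1^{-1/2}\theta(N_1)\theta(N_2))$ obtained by the same Mean Value Theorem / monotonicity-of-$\theta$ analysis carried out there in the three subcases $|n_2|<N$, $|n_2|\geq N>|n_3|$, $|n_3|\geq N$ — the extra smooth frequency $n_\lambda$ only helps, because it can always be summed out using rapid decay of $\widehat\lambda$. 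In Big Case 2 one has no multiplier cancellation, so one bounds $|M|\lesssim \theta(N_1)\theta(N_2)$ and instead extracts decay from the parabolic $\langle \tau - n^2\rangle \sim L$ localization, as in \cite{BGT,Z}: the key algebraic fact $|n_1^2 + n_2^2| \geq |n_1|^2$ forces $L_1 L_2 \gtrsim N_1^2$ in the low-$\tau$ subcase, and $L_3 \gtrsim N_1^2$ in the high-$\tau$ subcase, which after a Hölder estimate (placing two factors in $L^4_{t,x}$ via \eqref{eq:strichartztorus}, two more in $L^4_t L^2_x$ via \eqref{eq:L4tL2x} together with Bernstein, and the smallest two in $L^\infty_{t,x}$ via \eqref{eq:Linftytx1}) and summation over all dyadic parameters yields the factor $N_1^{-1/2-}$ with summable tails in the remaining $N_j$ and $L_j$. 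Throughout, the factors of $v$ carrying no derivative in their $X^{s,b}$ norm are absorbed using \eqref{eq:bound on D} (so $\|v_{N_j}\|_{X^{0,1/2+}} \lesssim \|\mathcal{D}v_{N_j}\|_{X^{0,1/2+}}$ and $\|v_{N_j}\|_{X^{1,1/2+}}$ against the uniform bound $\|v\|_{X^{1,1/2+}} \lesssim 1$), and then Proposition \ref{Proposition 4.5} converts everything into $\|\mathcal{D}\Phi\|_{L^2}^2 = E^1(\Phi)$ times the dyadic weights.

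Finally I would sum the resulting bounds $|I_{N_1,\ldots}| \lesssim N_1^{-1/2+} N_2^{-0+}\cdots N_6^{-0+}\, E^1(\Phi)$ (and their $L_j$-localized refinements) over all admissible dyadic configurations, using $N_1 \sim N_2 \gtrsim N$, to obtain $|E^1(u(\delta)) - E^1(u(0))| \lesssim N^{-1/2-}\, E^1(\Phi) \lesssim N^{-1/2+}\, E^1(\Phi)$, with an implied constant depending only on $(s, \mathrm{Energy}, \mathrm{Mass})$ and continuously on energy and mass via the $H^1$ conservation law. The main obstacle, as in Remark \ref{Remark 4.3}, is precisely why one cannot do better here: the inhomogeneity $\lambda(x)$ multiplying $|u|^2 u$ destroys the factorization identity \eqref{eq:denominator} that made the quadrilinear multiplier $\Psi$ pointwise bounded for the Hartree and external-potential cases, so there is no usable higher modified energy $E^2$ and one is stuck with the weaker $N^{-1/2-}$ decay coming from the raw increment of $E^1$. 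The only genuinely new bookkeeping relative to Lemma \ref{Bigstar1} is handling the smooth frequency $n_\lambda$ attached to $\widehat\lambda$ — but since $\widehat\lambda$ decays faster than any polynomial, this frequency is harmless in every case and in fact never competes to be among the two largest (for $N$ large), so the frequency analysis is essentially identical to the $k=2$ case of Theorem \ref{Theorem 1}. With $T \sim N^{1/2-}$ and the iteration argument of the proof of Theorem \ref{Theorem 1}, Lemma \ref{inhomogeneousE1increment} then yields $\|u(t)\|_{H^s} \lesssim (1+|t|)^{2s+}\|\Phi\|_{H^s}$, which is Theorem \ref{Theorem 4}.
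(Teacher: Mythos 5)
Your high-level approach --- reduce to the machinery of Lemma~\ref{Bigstar1} and use the smoothness of $\lambda$ to tame the extra frequency carried by $\widehat\lambda$, while citing Remark~\ref{Remark 4.3} to explain why $E^2$ is unavailable --- is the same as the paper's. The gap is in your treatment of Big Case~1. You assert the cancellation bound $M=O(N_1^{-1/2}\theta(N_1)\theta(N_2))$ unconditionally, on the grounds that the $\lambda$-frequency $n_0$ ``only helps'' and ``never competes to be among the two largest.'' But the cancellation in Case~1 of Big Case~1 comes from $|n_1+n_2|\ll N_1^{1/2}$, and with the constraint $n_0+n_1+\cdots+n_4=0$ this forces $|n_0|\ll N_1^{1/2}$ --- the relevant threshold is $N_1^{1/2}$, not $N_1$. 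If $N_1^{1/2}\lesssim N_0\ll N_1$, the multiplier bound you state is simply false. The rapid decay of $\widehat\lambda$ does rescue the estimate in that regime, but by a separate mechanism, not by repairing the multiplier cancellation, and this distinction is exactly the one new ingredient in the proof.

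The paper therefore makes the dichotomy explicit: fix $0<\epsilon<\frac12$ and split into $N_0\gtrsim(N_1^*)^\epsilon$ and $N_0\ll(N_1^*)^\epsilon$. In the first case no cancellation is attempted at all: one bounds $|M_4|\lesssim(\theta(N_1^*))^2$, takes the smooth $\mu$-factor in $L^\infty_{t,x}$ and the four $v$-factors in $L^4_{t,x}$ (so an $L^\infty,L^4,L^4,L^4,L^4$ H\"older), and wins from $\|\mu_{N_0}\|_{X^{M+\frac12+,\frac12+}}\lesssim N_0^{-M}\lesssim(N_1^*)^{-\epsilon M}$ by choosing $M$ with $\epsilon M\geq\frac12$. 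Only in the second case, where $N_0\ll(N_1^*)^{1/2}$ holds automatically, does one import the proof of Lemma~\ref{Bigstar1}, and then the cancellation in Case~1 of Big Case~1 is legitimate. Without stating this split, your claimed bound on $M$ in Big Case~1 does not hold and the argument has a hole. As a secondary point, your schematic symmetrized multiplier has six $\theta$-terms and you speak of ``six factors,'' but the nonlinearity $\lambda|u|^2u$ is cubic: the correct expression is $(\theta(n_1))^2-(\theta(n_2))^2+(\theta(n_3))^2-(\theta(n_4))^2$ summed over $n_0+n_1+\cdots+n_4=0$, with a fifth factor $\widehat\lambda(n_0)$ --- five factors in total, and this fifth frequency is precisely what triggers the $N_0$-dichotomy.
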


\begin{proof}

As before, it suffices to consider the case $t_0=0$.
Arguing as in previous sections, we obtain:

$$\frac{d}{dt}E^1(u)=ci \sum_{n_0+n_1+\cdots+n_4=0} \big((\theta(n_1))^2-(\theta(n_2))^2+(\theta(n_3))^2-(\theta(n_4))^2 \big) \widehat{\lambda}(n_0)\widehat{u}(n_1)\widehat{\bar{u}}(n_2)\widehat{u}(n_3)\widehat{\bar{u}}(n_4)$$
Let $N_0,N_1,\ldots,N_4$ be dyadic integers. We define $\mu(x,t):=f(t)\lambda(x)$ as in the proof of Proposition \ref{Proposition 4.4}. The expression we want to estimate is:
$$I_{N_0,N_1,N_2,N_3,N_4}:= \big| \int_0^{\delta} \sum_{n_0+n_1+\cdots+n_4=0}\big((\theta(n_1))^2-(\theta(n_2))^2+(\theta(n_3))^2-(\theta(n_4))^2\big)$$
\vspace{2mm}
$$\widehat{\mu}_{N_0}(n_0)\widehat{v}_{N_1}(n_1)\widehat{\bar{v}}_{N_2}(n_2)\widehat{v}_{N_3}(n_3)
\widehat{\bar{v}}_{N_4}(n_4)dt \big|$$
If $\chi=\chi(t)=\chi_{[0,\delta]}(t)$, then $I_{N_0,N_1,N_2,N_3,N_4}$ is:
$$\lesssim \big| \sum_{n_0+n_1+\cdots+n_4=0}\int_{\tau_0+\tau_1+\cdots \tau_4=0} d\tau_j
\big((\theta(n_1))^2-(\theta(n_2))^2+(\theta(n_3))^2-(\theta(n_4))^2\big)$$
\vspace{2mm}
$$\widetilde{\mu}_{N_0}(n_0,\tau_0)\widetilde{(\chi v)}_{N_1}(n_1,\tau_1)\widetilde{\bar{v}}_{N_2}(n_2,\tau_2)\widetilde{v}_{N_3}(n_3,\tau_3)
\widetilde{\bar{v}}_{N_4}(n_4,\tau_4) \big|$$
We define $N_j^*$ for $j=1,\ldots 5$ to be the ordering of $\{N_0,N_1,N_2,N_3,N_4\}$. With this notation, we have the following bounds:
\begin{equation}
\label{eq:Njstarpotential3}
N_1^* \gtrsim N, N_1^* \sim N_2^*
\end{equation}
We consider two cases:

\vspace{3mm}

\textbf{Case 1:}\,$N_0 \gtrsim (N_1^*)^{\epsilon}$ (Here $\epsilon>0$ is small.)

\vspace{2mm}

We use the fact that the multiplier is $O((\theta(N_1^*))^2)$, and an $L^{\infty}_{t,x}, L^4_{t,x}, L^4_{t,x}, L^4_{t,x}, L^4_{t,x}$ H\"{o}lder's inequality to deduce that:
$$I_{N_0,N_1,N_2,N_3,N_4} \lesssim (\theta(N_1^*))^2 \|\mu_{N_0}\|_{X^{\frac{1}{2}+,\frac{1}{2}+}}\|v_{N_1}\|_{X^{0,\frac{1}{2}+}}
\|v_{N_2}\|_{X^{0,\frac{1}{2}+}}\|v_{N_3}\|_{X^{0,\frac{1}{2}+}}\|v_{N_4}\|_{X^{0,\frac{1}{2}+}}.$$
Considering separately the cases when $N_0 \sim N_1^*$ and when $N_0 \ll N_1^*$, this expression is:
\vspace{2mm}
$$\lesssim \|\mathcal{D}\mu_{N_0}\|_{X^{\frac{1}{2}+, \frac{1}{2}+}}\|\mathcal{D}v\|_{X^{0,\frac{1}{2}+}}\|v\|_{X^{0,\frac{1}{2}+}}^3
+ \|\mu_{N_0}\|_{X^{\frac{1}{2}+,\frac{1}{2}+}}\|\mathcal{D}v\|_{X^{0,\frac{1}{2}+}}^2\|v\|_{X^{0,\frac{1}{2}+}}^2$$
\vspace{2mm}
$$\lesssim (\|\mathcal{D}\mu_{N_0}\|_{X^{\frac{1}{2}+,\frac{1}{2}+}}+\|\mu_{N_0}\|_{X^{\frac{1}{2}+,\frac{1}{2}+}})
\|\mathcal{D}v\|_{X^{0,\frac{1}{2}+}}^2\|v\|_{X^{0,\frac{1}{2}+}}^2$$
For $M>0$, this quantity is:
\vspace{2mm}
$$\lesssim \frac{1}{(N_0)^M}(\|\mathcal{D}\mu_{N_0}\|_{X^{M+\frac{1}{2}+,\frac{1}{2}+}}+\|\mu_{N_0}\|_{X^{M+\frac{1}{2}+,\frac{1}{2}+}}) \|\mathcal{D}v\|_{X^{0,\frac{1}{2}+}}^2 \|v\|_{X^{0,\frac{1}{2}+}}^2$$
\vspace{2mm}
$$\lesssim \frac{1}{(N_1^*)^{\epsilon M}} \|\mathcal{D} \Phi\|_{L^2}^2= \frac{1}{(N_1^*)^{\epsilon M}} E^1(\Phi).$$
In particular, if we choose $M$ sufficiently large so that $\epsilon M \geq \frac{1}{2}$, we get:
\begin{equation}
\label{eq:inhomogeneouscase1}
I_{N_0,N_1,N_2,N_3,N_4} \lesssim \frac{1}{(N_1^*)^{\frac{1}{2}}} E^1(\Phi).
\end{equation}

\vspace{2mm}

\textbf{Case 2:}\,$N_0 \ll (N_1^*)^{\epsilon}$ (for the same $\epsilon$ as before)

\vspace{2mm}

If we take $\epsilon<\frac{1}{2}$, we note that the same arguments we used to prove Theorem \ref{Theorem 1} allow us to deduce that in Case 2:

\begin{equation}
\label{eq:inhomogeneouscase2}
I_{N_0,N_1,N_2,N_3,N_4}\lesssim \frac{1}{(N_1^*)^{\frac{1}{2}}}E^1(\Phi).
\end{equation}
More precisely, we recall the proof of Lemma \ref{Bigstar1}. The only place in which one can't immediately adapt the proof of Lemma \ref{Bigstar1} to $(\ref{eq:inhomogeneouscubicnls})$ is in Case 1 of Big Case 1. If one has the additional assumption that $N_0 \ll (N_1^*)^{\frac{1}{2}}$, the proof then follows as before.

Using $(\ref{eq:inhomogeneouscase1})$, $(\ref{eq:inhomogeneouscase2})$, and summing in the $N_j$, the Lemma follows.
\end{proof}

\vspace{3mm}

\begin{remark}
\label{Remark 4.6}
If $\lambda$ is constant, we can obtain $(\ref{eq:inhomogeneouscubicnls})$ by rescaling the cubic NLS, so Theorem \ref{Theorem 4} can be improved in this case.
\end{remark}

\subsection{Comments on $(\ref{eq:Hartree})$, $(\ref{eq:potentialcubicnls})$, and $(\ref{eq:inhomogeneouscubicnls})$}

The reason why we considered the three equations in this section was because they were obtained from the cubic NLS by breaking the complete integrability. Different ways of breaking the complete integrability of the cubic NLS manifested themselves in the bounds we obtained, and the methods we could use to obtain them. As we saw, the least drastic change happened when we added the convolution potential in the case of the Hartree Equation, whereas the most drastic change happened when we multiplied the nonlinearity with the inhomogeneity in $(\ref{eq:inhomogeneouscubicnls})$.

\section{Appendix A: Proof of Lemma 2.1.}

\begin{proof}
We argue by duality.
Let us consider $v$ s.t.$\|v\|_{X^{-s,-b}}\leq 1$. We want to prove that:

\begin{equation}
\label{eq:dualXsb}
|\int_c^d\int_{S^1} u(x,t) \overline{v(x,t)} dxdt| \lesssim \|u\|_{X^{s,b+}} \|v\|_{X^{-s,-b}}.
\end{equation}
We observe: $$\chi_{[c,d]}(t)=\frac{sign(t-c)-sign(t-d)}{2}.$$
By symmetry, we just need to get the bound:

\begin{equation}
\label{eq:signc}
|\int_{\mathbb{R}}\int_{S^1} sign(t-c)u(x,t) \overline{v(x,t)} dxdt| \lesssim \|u\|_{X^{s,b+}} \|v\|_{X^{-s,-b}}.
\end{equation}
Let us first prove, the claim when $c=0$, i.e.

\begin{equation}
\label{eq:sign}
|\int_{\mathbb{R}}\int_{S^1} sign(t)u(x,t) \overline{v(x,t)} dxdt| \lesssim \|u\|_{X^{s,b+}} \|v\|_{X^{-s,-b}}.
\end{equation}
The key to prove (\ref{eq:sign}) is to use the Hilbert transform in the time variable.

\vspace{2mm}

We recall that the Hilbert transform on the real line is defined by:

\begin{equation}
\label{eq:Hilberttransform1}
Hf:=cf *(p.v. \frac{1}{x}).
\end{equation}
The constant $c$ is chosen so that $H$ is an isometry on $L^2$.
It can be shown \cite{D}, that one then has the identity:

\begin{equation}
\label{eq:Hilberttransform2}
\widehat{Hf}(\xi) \sim -isign(\xi)\hat{f}(\xi).
\end{equation}
From Parseval's identity and from (\ref{eq:Hilberttransform1}),(\ref{eq:Hilberttransform2}), we obtain:

$$\int_{\mathbb{R}}\int_{S^1} sign(t)u(x,t) \overline{v(x,t)} dxdt \sim
\int_{\mathbb{R}} \sum_n \widetilde{u}(n,\tau)(p.v.\int_{\mathbb{R}}
\overline{\widetilde{v}(n,\tau')}\frac{1}{\tau-\tau'}d \tau')d \tau=:J.$$
\vspace{2mm}
Let us consider three cases:

\vspace{3mm}

\textbf{Case 1:} $\langle \tau + n^2 \rangle \sim \langle \tau' + n^2 \rangle.$

\vspace{1mm}

\textbf{Case 2:} $\langle \tau + n^2 \rangle \gg \langle \tau' + n^2 \rangle.$

\vspace{1mm}

\textbf{Case 3:} $\langle \tau + n^2 \rangle \ll \langle \tau' + n^2 \rangle.$

\vspace{2mm}
Let $J_1,J_2,J_3$ denote the contributions to $J$ coming from the three cases respectively. We estimate these contributions separately.

\vspace{3mm}

\textbf{Case 1:} In this case, we perform a dyadic decomposition.
Let $\widetilde{u_k}$,$\widetilde{v_k}$ respectively denote the localizations of $\widetilde{u}$, $\widetilde{v}$ to $\langle \tau + n^2 \rangle \sim \langle \tau' + n^2 \rangle \sim 2^k$. Then, since in this case $|j-k|=O(1)$, we get:

$$|J_1|= |\sum_{|j-k|=O(1)} \int_{\mathbb{R}}\, \sum_n
\widetilde{u_j}(n,\tau)(p.v.\int_{\mathbb{R}}
\overline{\widetilde{v_k}(n,\tau')}\frac{1}{\tau-\tau'}d \tau')d \tau| \sim$$
$$\sim |\sum_{|j-k|=O(1)} \int_{\mathbb{R}}\, \sum_n
\widetilde{u_j}(n,\tau)H_{\tau}\overline{\widetilde{v_k}}(n,\tau)d \tau|$$
$$\leq \sum_{|j-k|=O(1)} |\int_{\mathbb{R}}\, \sum_n
\langle n \rangle ^s \widetilde{u_j}(n,\tau)\langle n \rangle ^{-s} H_{\tau} \overline{\widetilde{v_k}}(n,\tau)d \tau|.$$
Here, we denoted by $H_{\tau}(\cdot)$ the Hilbert transform in the $\tau$ variable. We then use the Cauchy-Schwarz inequality in $(n,\tau)$ to see that the previous expression is:

$$\leq \sum_{|j-k|=O(1)} \|\langle n \rangle^s\widetilde{u_j}\|_{l^2_n L^2_{\tau}} \|\langle n \rangle^{-s}H_{\tau}\overline{\widetilde{v_k}}\|_{l^2_n L^2_{\tau}}.$$
We then recall that the Hilbert transform is bounded on $L^2$ by (\ref{eq:Hilberttransform2}) to deduce that:

$$|J_1| \lesssim \sum_{|j-k|=O(1)} \|\langle n \rangle^s\widetilde{u_j}\|_{l^2_n L^2_{\tau}} \|\langle n \rangle^{-s}\overline{\widetilde{v_k}}\|_{l^2_n L^2_{\tau}}$$
\vspace{1mm}
Since $|j-k|=O(1)$, and by definition of $u_j,v_k$, this is:

\vspace{1mm}

$$\lesssim \sum_{|j-k|=O(1)} \|\langle n \rangle^s \langle \tau + n^2 \rangle^b \widetilde{u_j}\|_{l^2_n L^2_{\tau}} \|\langle n \rangle^{-s} \langle \tau + n^2 \rangle^{-b} \overline{\widetilde{v_k}}\|_{l^2_n L^2_{\tau}}$$
\vspace{1mm}
We use the Cauchy-Schwarz inequality in the sum of $j,k$ to bound this by:

\vspace{1mm}

$$\lesssim \|u\|_{X^{s,b}} \|v\|_{X^{-s,-b}}\leq \|u\|_{X^{s,b+}} \|v\|_{X^{-s,-b}}.$$

\vspace{3mm}

\textbf{Case 2:}
\vspace{2mm}
Since in this case $\langle \tau + n^2 \rangle \gg \langle \tau' + n^2 \rangle$, we have that:

$$|\tau - \tau'| \sim \langle \tau + n^2 \rangle \gg \langle \tau' + n^2 \rangle.$$
It follows that for all $\theta \in [0,1]$, one has:
$$\frac{1}{|\tau - \tau'|} \lesssim \frac{1}{ \langle \tau + n^2 \rangle^{\theta} \langle \tau' + n^2 \rangle^{1-\theta}}.$$
We deduce that:

$$|J_2|\lesssim \int_{\mathbb{R}}\int_{\mathbb{R}} \sum_n |\widetilde{u}(n,\tau)||\widetilde{v}(n,\tau')|\frac{1}{ \langle \tau + n^2 \rangle^{\theta} \langle \tau' + n^2 \rangle^{1-\theta}} d\tau d\tau'=$$

$$=\sum_n (\int_{\mathbb{R}} |\widetilde{u}(n,\tau)| \langle \tau + n^2 \rangle^{\frac{1}{2}+\delta-\theta}\langle \tau + n^2 \rangle^{-\frac{1}{2}-\delta}
\langle n \rangle^s d\tau)
(\int_{\mathbb{R}} |\widetilde{v}(n,\tau')| \langle \tau' + n^2 \rangle^{\frac{1}{2}+\delta-(1-\theta)}\langle \tau' + n^2 \rangle^{-\frac{1}{2}-\delta}
\langle n \rangle^{-s} d\tau').$$
Here, $\delta>0$ was arbitrary.
Now, we first use the Cauchy-Schwarz inequality in $\tau, \tau'$, together with the fact that:
$$\|\langle \tau + n^2\rangle^{-\frac{1}{2}-\delta}\|_{l^{\infty}_n L^2_{\tau}} \lesssim 1$$
and
$$\|\langle \tau' + n^2\rangle^{-\frac{1}{2}-\delta}\|_{l^{\infty}_n L^2_{\tau'}} \lesssim 1$$
followed by the Cauchy-Schwarz inequality in $n$ to deduce that:

$$|J_2|\lesssim \|u\|_{X^{s,\frac{1}{2}+ \delta -\theta}}\|v\|_{X^{-s,\frac{1}{2}+ \delta -(1-\theta)}}.$$
Let us take $\delta>0$ sufficiently small so that $b + \delta <\frac{1}{2}$. We then take:
$\theta:=\frac{1}{2}-b-\delta$, which is positive, and $b+:=b+2\delta$. With such a choice, we get that:
$$|J_2|\lesssim \|u\|_{X^{s,b+}}\|v\|_{X^{-s,-b}}.$$

\textbf{Case 3:}

In this case, we again have: $|\tau- \tau'| \gtrsim \langle \tau + n^2 \rangle, \langle \tau' + n^2 \rangle,$
and we argue to get the same bound as in the previous case.

\vspace{2mm}
The bound (\ref{eq:sign}) now follows.

\vspace{2mm}
Let us now observe that the bound (\ref{eq:sign}) implies (\ref{eq:signc}).

\vspace{3mm}
Let $M_a$ denote the \emph{modulation operator} $$M_af (x)=e^{i a x}f(x).$$
Then, one obtains that:

$$(M_a H M_{-a}f)\,\,\widehat{}\,\,(\xi) \sim -i sign(\xi-a) \widehat{f}(\xi).$$
Let $\Phi^{-1}(\cdot)$ denote the inverse spacetime Fourier transform. Then, by Parseval's Identity, we obtain:

$$\int_{\mathbb{R}}\int_{S^1}sign(t-c)u(x,t)\overline{v(x,t)} dxdt \sim$$
$$\sim \int_{\mathbb{R}}\sum_n (\Phi^{-1}u) (n,\tau) \overline{M_cHM_{-c}(\Phi^{-1}v)(n,\tau)}d\tau \sim$$
$$\sim \int_{\mathbb{R}}\sum_n (\Phi^{-1}u) (n,\tau) e^{-i c \tau} p.v. ( \int_{\mathbb{R}} \frac{e^{i c \tau'} \overline{(\Phi^{-1}v)(n,\tau')}}{\tau-\tau'}d\tau')d\tau \sim$$
$$\sim \int_{\mathbb{R}}\sum_n \widetilde{u}(n,\tau) e^{i c \tau} p.v. ( \int_{\mathbb{R}} \frac{e^{-i c \tau'} \overline{\widetilde{v}(n,\tau')}}{\tau-\tau'}d\tau')d\tau.$$
In the last step, we use the Fourier inversion formula which gives us that:

$$\Phi^{-1}w(n,\tau)\sim \widetilde{w}(-n,-\tau).$$
Multiplication by the unimodular factors $e^{i c \tau},e^{i c \tau'}$ doesn't change the rest of the argument used to derive (\ref{eq:sign}). Hence, the proof of (\ref{eq:signc}) follows as before.

\vspace{1mm}

\end{proof}

\begin{remark}
We deduce from the proof that none of the implied constants depend on $c$ and $d$.
\end{remark}

\section{Appendix B: Proofs of Propositions \ref{Proposition 3.1}, \ref{Proposition 3.2}, and \ref{Proposition 3.3}:}

In order to prove Proposition \ref{Proposition 3.1}, we recall several facts. One of the key ingredients of the proof is the following set of localization estimates in $X^{s,b}$ spaces. We start with $f \in
C^{\infty}_0(\mathbb{R}),\delta>0$ arbitrary, and we assume that
$b>\frac{1}{2}$. Let $S(t)$ denote the linear Schr\"{o}dinger
propagator. Then, there exists a constant $C>0$ depending only on
$f,s,b$ such that:

\begin{equation}
\label{eq:loc1} \|f(\frac{t}{\delta})S(t)\Phi\|_{X^{s,b}}\leq C
\delta^{\frac{1-2b}{2}} \|\Phi\|_{H^s}.
\end{equation}

\begin{equation}
\label{eq:loc2}
\|f(\frac{t}{\delta})h\|_{X^{s,b}}\leq C
\delta^{\frac{1-2b}{2}}\|h\|_{X^{s,b}}.
\end{equation}

\begin{equation}
\label{eq:loc3}
\|f(\frac{t}{\delta})\int_{0}^{t}S(t-t')w(t')d
t'\|_{X^{s,b}}\leq C \delta^{\frac{1-2b}{2}}\|w\|_{X^{s,b-1}}.
\end{equation}

The analogous fact is proved for the $X^{s,b}$ spaces corresponding to the
Korteweg-de Vries equation in \cite{KPV2} in the non-periodic case.
However, all the bounds for the periodic Schr\"{o}dinger equation
follow in the same way, because we are estimating the integral in the
variable dual to time. These bounds for our equation can also be found
in \cite{CafE}. We also note that in $(\ref{eq:loc3})$, we can
translate time so that our initial time is arbitrary $t_0$ and not
necessarily 0.

If, on the other hand $b'<\frac{1}{2}$,one has:

\begin{equation}
\label{eq:loc4}
\|f(\frac{t}{\delta})w\|_{X^{s,b'}}\lesssim_f \|w\|_{X^{s,b'}}.
\end{equation}
We observe that the implied constant is independent of $\delta>0$.

For the proof of the inequality (\ref{eq:loc4}), one should consult Lemma 1.2. in \cite{G}. We note that the proof
from the paper holds if $b=b'$ in the given notation. One can also refer to Lemma 2.11 in \cite{Tao}

\begin{proof} (\emph{of Proposition \ref{Proposition 3.1}})

Let us WLOG assume that $t_0=0$ for simplicity of notation. Later, we will see that the $\delta$ we obtain is indeed independent of time. Throughout the proof, we take $\delta>0$ small which we will determine later. Let $b=\frac{1}{2}+=\frac{1}{2}+\epsilon$ for $\epsilon$
sufficiently small which we also determine later.

Let us start by taking $\chi,\phi,\psi \in C^{\infty}_0(\mathbb{R})$,
with $0 \leq \chi,\phi,\psi \leq 1$, such that:

\begin{equation}
\label{eq:chi} \chi=1\,\, \mbox{on}\,\, [-1,1]\,\,,\chi=0
\,\,\mbox{outside}\,\, [-2,2].
\end{equation}
\begin{equation}
\label{eq:phi} \phi=1\,\, \mbox{on}\,\,  [-2,2]\,\,,\phi=0\,\,
\mbox{on}\,\, [-4,4].
\end{equation}
\begin{equation}
\label{eq:psi} \psi=1\,\, \mbox{on}\,\,  [-4,4]\,\,,\psi=0\,\,
\mbox{on}\,\, [-8,8].
\end{equation}
We let:
\begin{equation}
\label{eq:localizedbumpfunctions}
\chi_{\delta}:=\chi(\frac{\cdot}{\delta}),\phi_{\delta}:=\phi(\frac{\cdot}{\delta}),\psi_{\delta}
:=\psi(\frac{\cdot}{\delta}).
\end{equation}
Then:
\begin{equation}
\label{eq:chidelta} \chi_{\delta}=1\,\, \mbox{on}\,\,
[-\delta,\delta]\,\,,\chi_{\delta}=0 \,\,\mbox{outside}\,\,
[-2\delta,2\delta].
\end{equation}
\begin{equation}
\label{eq:phidelta} \phi_{\delta}=1\,\, \mbox{on}\,\,
[-2\delta,2\delta]\,\,,\phi_{\delta}=0\,\, \mbox{outside}\,\,
[-4\delta,4\delta].
\end{equation}
\begin{equation}
\label{eq:psidelta} \psi_{\delta}=1\,\, \mbox{on}\,\,
[-4\delta,4\delta]\,\,,\psi_{\delta}=0\,\, \mbox{outside}\,\,
[-8\delta,8\delta].
\end{equation}
For $v:S^1 \times \mathbb{R} \mapsto \mathbb{C}$, we define:
$$Lv:=\chi_{\delta}(t)S(t)\Phi-i\chi_{\delta}(t)\int_{0}^{t}S(t-t')|v|^4v(t')dt'.$$
By $(\ref{eq:chidelta})$ and $(\ref{eq:phidelta})$, and denoting
$\phi_{\delta}v$ by $v_{\delta}$,we obtain:
$$Lv=\chi_{\delta}(t)S(t)\Phi-i\chi_{\delta}(t)\int_{0}^{t}S(t-t')|v_{\delta}|^4v_{\delta}(t')dt.'$$
Using $(\ref{eq:loc1})$ and $(\ref{eq:loc3})$, we obtain:
\begin{equation}
\label{eq:boundonL}
\|Lv\|_{X^{s,b}}\leq c
\delta^{\frac{1-2b}{2}}\|\Phi\|_{H^s}+c
\delta^{\frac{1-2b}{2}}\||v_{\delta}|^4 v_{\delta} \|_{X^{s,b-1}}.
\end{equation}
\vspace{3mm}
We estimate the quantity $\||v_{\delta}|^4 v_{\delta}
\|_{X^{s,b-1}}$ by duality. Let us take: $$c:\mathbb{Z} \times \mathbb{R} \rightarrow
\mathbb{C},\,\mbox{such that}\,\,\sum_n\int d\tau |c(n,\tau)|^2=1.$$
Let us consider the quantity:
\begin{equation}
\label{eq:zvijezda}
\,\,\sum_n \,\, \int \,d\tau (1+|n|)^s\,
(1+|\tau+n^2|)^{b-1}\,\widetilde{(|v_{\delta}|^4v_{\delta})}(n,\tau)\,c(n,\tau)\,\,=:I
\end{equation}
Since we know:
$$\widetilde{(|v_{\delta}|^4v_{\delta})}(n,\tau)=
\sum_{n_1-n_2+n_3-n_4+n_5=n}\int_{\tau_1-\tau_2+\tau_3-\tau_4+\tau_5=\tau}d \tau_j
\widetilde{v_{\delta}}(n_1,\tau_1)\overline{\widetilde{v_{\delta}}(n_2,\tau_2)}\widetilde{v_{\delta}}(n_3,\tau_3)
\overline{\widetilde{v_{\delta}}(n_4,\tau_4)}\widetilde{v_{\delta}}(n_5,\tau_5).$$
it follows that:
$$|I| \leq \sum_n\sum_{n_1-n_2+n_3-n_4+n_5=n}\int_{\tau_1-\tau_2+\tau_3-\tau_4+\tau_5=\tau}d\tau_j
\{(1+|n|)^s(1+|\tau+n^2|)^{b-1}|c(n,\tau)|$$
$$|\widetilde{v_{\delta}}(n_1,\tau_1)||\overline{\widetilde{v_{\delta}}(n_2,\tau_2)}||\widetilde{v_{\delta}}(n_3,\tau_3)|
|\overline{\widetilde{v_{\delta}}(n_4,\tau_4)}||\widetilde{v_{\delta}}(n_5,\tau_5)|\}.$$
Since $n=n_1-n_2+n_3-n_4+n_5$, it follows that:

$$|n|^s \lesssim \max \{|n_1|^s,|n_2|^s,|n_3|^s,|n_4|^s,|n_5|^s\}.$$
By symmetry, it suffices to bound the expression:
$$I_1:=\sum_n\sum_{n_1-n_2+n_3-n_4+n_5=n}\int_{\tau_1-\tau_2+\tau_3-\tau_4+\tau_5=\tau}d\tau_j d\tau
\{\frac{|c(n,\tau)|}{(1+|\tau+n^2|)^{1-b}}$$
$$(1+|n_1|)^s|\widetilde{v_{\delta}}(n_1,\tau_1)||\overline{\widetilde{v_{\delta}}(n_2,\tau_2)}||\widetilde{v_{\delta}}(n_3,\tau_3)|
|\overline{\widetilde{v_{\delta}}(n_4,\tau_4)}||\widetilde{v_{\delta}}(n_5,\tau_5)|\}=$$
$$=\sum_n\sum_{n_1+n_2+n_3+n_4+n_5=n}\int_{\tau_1+\tau_2+\tau_3+\tau_4+\tau_5=\tau}d\tau_j d\tau
\{\frac{|c(n,\tau)|}{(1+|\tau+n^2|)^{1-b}}$$
$$(1+|n_1|)^s|\widetilde{v_{\delta}}(n_1,\tau_1)||\overline{\widetilde{v_{\delta}}(-n_2,-\tau_2)}||\widetilde{v_{\delta}}(n_3,\tau_3)|
|\overline{\widetilde{v_{\delta}}(-n_4,-\tau_4)}||\widetilde{v_{\delta}}(n_5,\tau_5)|\}.$$
\vspace{3mm}
Let us now define the following functions:
\vspace{3mm}
\begin{equation}
\label{eq:F}
F(x,t):=\sum_{n}\int d\tau \{
\frac{|c(n,\tau)|}{(1+|\tau+n^2|)^{1-b}} e^{inx+it\tau}\}.
\end{equation}
\begin{equation}
\label{eq:G}
G(x,t):=\sum_{n}\int d\tau \{ (1+|n|)^s
|\widetilde{v_{\delta}}(n,\tau)| e^{inx+it\tau}\}.
\end{equation}
\begin{equation}
\label{eq:H}
H(x,t):=\sum_{n}\int d\tau \{
|\widetilde{v_{\delta}}(n,\tau)| e^{inx+it\tau}\}.
\end{equation}
Consequently, by using Parseval's identity, one obtains:

$$I_1 \lesssim \int\int F\bar{G}H\bar{H}H\bar{H} dxdt=|\int\int  F\bar{G}H\bar{H}H\bar{H} dxdt|,$$
which by H\"{o}lder's inequality is:

\begin{equation}
\label{eq:holderproduct}
\leq \|F\|_{L^4_{t,x}} \|G\|_{L^4_{t,x}}
\|H\|^2_{L^4_{t,x}} \|H\|^2_{L^{\infty}_{t,x}}.
\end{equation}
Recalling $(\ref{eq:strichartztorus})$, and using the fact that
$b=\frac{1}{2}+$, we have \footnote{In the following calculation, and later on, we crucially use the fact that
one doesn't change the $X^{s,b}$ norm of a function when one takes absolute values in its Spacetime Fourier Transform.}:
\begin{equation}
\label{eq:boundF}
\|F\|_{L^4_{t,x}}\lesssim
\|F\|_{X^{0,\frac{3}{8}}}\leq \|F\|_{X^{0,1-b}}=\|c\|_{l^2_k
L^2_{\tau}}=1.
\end{equation}
$$ \|G\|_{L^4_{t,x}}\lesssim
\|G\|_{X^{0,\frac{3}{8}}}=\|(1+|n|)^s|\widehat{v_{\delta}}(n,\tau)|(1+|\tau+n^2|)^{\frac{3}{8}}\|_{l^2_n
L^2_{\tau}}=$$
\begin{equation}
\label{eq:boundG}
=\|v_{\delta}\|_{X^{s,\frac{3}{8}}}
\lesssim\|v\|_{X^{s,\frac{3}{8}}}\leq\|v\|_{X^{s,b}}.
\end{equation}
The implied constant in the above inequality is independent of $\delta$ by (\ref{eq:loc4}). Also:
$$\|H\|_{L^4_{t,x}}\lesssim
\|v_{\delta}\|_{X^{0,\frac{3}{8}}}$$
We interpolate between $X^{0,0}$ and
$X^{0,b}$ for an appropriate $\theta \in (0,1)$ to deduce that this is:
$$\lesssim
\|v_{\delta}\|_{X^{0,0}}^{\theta}\|v_{\delta}\|_{X^{0,b}}^{1-\theta}.$$
We estimate $\|v_{\delta}\|_{X^{0,0}}$ by:
$$\|v_{\delta}\|_{X^{0,0}}=\|v_{\delta}\|_{L^2_{x,t}}$$
which by the support properties of $\psi_{\delta}$ is:
$$=\|v_{\delta}\psi_{\delta}\|_{L^2_{x,t}} \leq
\|\psi_\delta\|_{L^4_t}\|v_{\delta}\|_{L^4_tL^2_x}$$
$$\lesssim \delta^{\frac{1}{4}} \|v_{\delta}\|_{X^{0,\frac{1}{4}+}} \lesssim  \delta^{\frac{1}{4}}
\|v_{\delta}\|_{X^{0,b}}.$$
\vspace{2mm}
Here, we have used (\ref{eq:L4tL2x}).

\vspace{2mm}

Hence:
$$\|H\|_{L^4_{t,x}} \lesssim
(\delta^{\frac{1}{4}}\|v_{\delta}\|_{X^{0,b}})^{\theta}(\|v_{\delta}\|_{X^{0,b}})^{1-\theta}=$$
\begin{equation}
\label{eq:boundH}
=\delta^{\frac{\theta}{4}}\|v_{\delta}\|_{X^{0,b}}\lesssim
\delta^{\frac{\theta}{4}+\frac{1-2b}{2}}\|v\|_{X^{0,b}}.
\end{equation}
In the last step, we used $(\ref{eq:loc2})$.
\vspace{2mm}

Furthermore, by Sobolev embedding:
\begin{equation}
\label{eq:boundH2}
\|H\|_{L^{\infty}_{t,x}} \lesssim
\|H\|_{X^{\frac{1}{2}+,\frac{1}{2}+}}=\|v_{\delta}\|_{X^{\frac{1}{2}+,\frac{1}{2}+}}
\leq \|v_{\delta}\|_{X^{1,b}}\lesssim
\delta^{\frac{1-2b}{2}}\|v\|_{X^{1,b}}.
\end{equation}

$\underline{\mbox{We calculate}\,\, \theta:}$

\vspace{2mm}

We know:
$$\frac{3}{8}=0 \cdot \theta + b \cdot (1-\theta)$$
So:
\begin{equation}
\label{eq:theta}
\theta=\frac{b-\frac{3}{8}}{b}=\frac{1+8\epsilon}{4+8\epsilon}.
\end{equation}
Combining $(\ref{eq:holderproduct})-(\ref{eq:boundH2})$, it follows
that:
$$ \||v_{\delta}|^4 v_{\delta}\|_{X^{s,b-1}}\lesssim
\|v\|_{X^{s,b}}({\delta^{\frac{\theta}{4}+\frac{1-2b}{2}}\|v\|_{X^{0,b}}})^2
(\delta^{\frac{1-2b}{2}}\|v\|_{X^{1,b}})^2\leq$$
\begin{equation}
\label{eq:nonlinearitybound} \leq \delta^{\hspace{1mm}
\theta_0+2(1-2b)}(\|v\|_{X^{1,b}})^4\|v\|_{X^{s,b}}.
\end{equation}
Here:

\begin{equation}
\label{eq:theta0}
\theta_0:=\frac{\theta}{2}=\frac{1+8\epsilon}{8+16\epsilon}.
\end{equation}
Hence, from $(\ref{eq:boundonL})$ and
$(\ref{eq:nonlinearitybound})$,we obtain:

\begin{equation}
\label{eq:boundonL2} \|Lv\|_{X^{s,b}}\leq c
\delta^{\frac{1-2b}{2}}\|\Phi\|_{H^s}+c_1
\delta^{\hspace{1mm}\theta_0
+\frac{5}{2}(1-2b)}(\|v\|_{X^{1,b}})^4\|v\|_{X^{s,b}}.
\end{equation}
Here $c,c_1>0$ depend on $s$.

\vspace{2mm}

If we take $c,c_1$ possibly even smaller, and if we repeat the
previous argument in the special case $s=1$, it follows that:

\begin{equation}
\label{eq:boundonL2_1} \|Lv\|_{X^{1,b}}\leq c
\delta^{\frac{1-2b}{2}}\|\Phi\|_{H^1}+c_1
\delta^{\hspace{1mm}\theta_0 +\frac{5}{2}(1-2b)}(\|v\|_{X^{1,b}})^5.
\end{equation}
Now, we estimate $\|Lv-Lw\|_{X^{1,b}}$. In order to do this, we note
that:

\vspace{3mm}

$|v|^4v-|w|^4w=\,\,$Sum of quintic terms, each of which contains at
least one factor of $v-w$ or $\overline{v-w}$. By the above proof, since
the estimates $(\ref{eq:boundonL2})$ depended only on bounds on
spacetime norms in $x,t$, we can put complex conjugates in the appropriate factors (so if
$v-w$ comes with a conjugate, it doesn't matter). Furthermore, by
the triangle inequality, we know:
$\|v-w\|_{X^{1,b}}\leq\|v\|_{X^{1,b}}+\|w\|_{X^{1,b}}$. Thus,
arguing as before, we can obtain, for some $c_2>0:$

\begin{equation}
\label{eq:boundonL3}
\|Lv-Lw\|_{X^{1,b}}\leq c_2
\delta^{\hspace{1mm}\theta_0
+\frac{5}{2}(1-2b)}(\|v\|_{X^{1,b}}+\|w\|_{X^{1,b}})^4\|v-w\|_{X^{1,b}}.
\end{equation}

\vspace{4mm}

Let

\begin{equation}
\label{eq:Gamma}
\Gamma:=\{v:\|v\|_{X^{s,b}}\leq
2c\delta^{\frac{1-2b}{2}}\|\Phi\|_{H^s},\|v\|_{X^{1,b}}\leq
2c\delta^{\frac{1-2b}{2}}\|\Phi\|_{H^1}\}.
\end{equation}
Let us give $\Gamma$ the metric $d(v,w):=\|v-w\|_{X^{1,b}}$. Then, by
Proposition \ref{Proposition 3.2}, $(\Gamma,d)$ is a Banach space.

From $(\ref{eq:boundonL2})$, we have for all $v\in \Gamma$

$$\|Lv\|_{X^{s,b}}\leq c \delta^{\frac{1-2b}{2}} \|\Phi\|_{H^s} +
c_1 \delta^{\theta_0+\frac{5}{2}(1-2b)}(2c\delta^{\frac{1-2b}{2}}\|\Phi\|_{H^1})^4
\hspace{1mm}2c\delta^{\frac{1-2b}{2}}\|\Phi\|_{H^s}=$$

\begin{equation}
\label{eq:Lvbound_s}
=c\delta^{\frac{1-2b}{2}}\|\Phi\|_{H^s}(1+32c_1c^4\delta^{\hspace{1mm}\theta_0+\frac{9}{2}(1-2b)}\|\Phi\|_{H^1}^4).
\end{equation}
Analogously, from $(\ref{eq:boundonL2_1})$ :

\begin{equation}
\label{eq:Lvbound_1} \|Lv\|_{X^{1,b}}\leq
c\delta^{\frac{1-2b}{2}}\|\Phi\|_{H^1}(1+32c_1c^4\delta^{\hspace{1mm}\theta_0+\frac{9}{2}(1-2b)}\|\Phi\|_{H^1}^4).
\end{equation}
Finally, if $v,w \in \Gamma,\,\,(\ref{eq:boundonL3})$ implies that:

$$\|Lv-Lw\|_{X^{1,b}}\leq c_2
\delta^{\hspace{1mm}\theta_0+\frac{5}{2}(1-2b)}(4c\delta^{\frac{1-2b}{2}}\|\Phi\|_{H^1})^4
\|v-w\|_{X^{1,b}}$$

\begin{equation}
\label{eq:Lv-Lw bound} \leq 256
c_2c^4\delta^{\hspace{1mm}\theta_0+\frac{9}{2}(1-2b)}
\|\Phi\|_{H^1}^4 \|v-w\|_{X^{1,b}}.
\end{equation}

\vspace{3mm}

We recall that $\theta_0=\frac{1+8\epsilon}{8+16\epsilon}, b=\frac{1}{2}+\epsilon$.
We observe that for $\epsilon>0$ sufficiently small, one has

\begin{equation}
\label{eq:epsilonuvjet}
\frac{1+8\epsilon}{8+16\epsilon}-9\epsilon>0
\end{equation}
From now, let us fix $\epsilon$ to satisfy the condition $(\ref{eq:epsilonuvjet})$. In other words, we have:

\begin{equation}
\label{eq:epsiloncondition}
\theta_0+\frac{9}{2}(1-2b)>0.
\end{equation}

\vspace{3mm}

Hence, we can choose $\delta>0$ sufficiently small such that:

\begin{equation}
\label{eq:contraction1}
32c_1c^4\delta^{\hspace{1mm}\theta_0+\frac{9}{2}(1-2b)}\|\Phi\|_{H^1}^4\leq
1.
\end{equation}

\begin{equation}
\label{eq:contraction2} 256
c_2c^4\delta^{\hspace{1mm}\theta_0+\frac{9}{2}(1-2b)}\|\Phi\|_{H^1}^4\leq
\frac{1}{2}.
\end{equation}
From $(\ref{eq:contraction1}),(\ref{eq:contraction2})$, the
preceding bounds and the fact that $(\Gamma,d)$ is a Banach Space,
it follows that \emph{L has a fixed point} $v\in \Gamma.$

\vspace{3mm}

By construction of L, for this $v$, we know:

\begin{itemize}

\item $v(t_0)=\Phi.$

\item $iv_t+\Delta v=|v|^4v \mbox{\,\,for\,\,} t\in
[t_0-\delta,t_0+\delta]$, and hence by uniqueness (which is proved
by an application of Gronwall's inequality), it follows that:

$$v=u \mbox{\,\,for\,\,}t \in [t_0-\delta,t_0+\delta].$$

\item $\|v\|_{X^{s,b}}\leq 2 c\delta^{\frac{1-2b}{2}} \|\Phi\|_{H^s}=2 c\delta^{\frac{1-2b}{2}} \|u_0\|_{H^s}.$

\end{itemize}

It just remains to address the issue of choosing $\delta$ uniformly
in $t_0$. However, from
$(\ref{eq:contraction1}),(\ref{eq:contraction2})$, it follows that
we just want $\delta$ to satisfy:
\begin{equation}
\label{eq:conditionondelta}
\delta^{\theta_0+\frac{9}{2}(1-2b)} \|\Phi\|_{H^1}^4\lesssim
1.
\end{equation}
By the fact that:
$$\|\Phi\|_{H^1}\lesssim_{Mass(u),Energy(u)}1$$
it follows that we can choose $$\delta \sim_{Mass(u),Energy(u)}1$$ which is
uniform in time, so the previous procedure can be iterated with
fixed increment $\delta$.

This proves (\ref{eq:properties of v1}) and (\ref{eq:properties of v2}). We now have to prove (\ref{eq:properties of v3}).

Let us recall that the function $v$ that we have constructed satisfies:
\begin{equation}
\label{eq:X1bbound}
\|v\|_{X^{1,b}}\leq c \delta^{\frac{1-2b}{2}} \|\Phi\|_{H^1}.
\end{equation}

\begin{equation}
\label{eq:Xsbbound}
\|v\|_{X^{s,b}}<\infty.
\end{equation}
and

$$Lv=\chi_{\delta}(t)S(t)\Phi-i\chi_{\delta}(t)\int_{0}^{t}S(t-t')|v_{\delta}|^4v_{\delta}(t')dt'.$$
We take $\mathcal{D}$'s in the previous equation, and since $\mathcal{D}$ acts only on the spatial variables (as a Fourier multiplier), we obtain:

$$\mathcal{D}v=\chi_{\delta}(t)S(t)\mathcal{D}\Phi - i\chi_{\delta}(t)\int_0^t S(t-t') \mathcal{D}(|v_{\delta}|^4 v_{\delta}(t'))dt'.$$
We know that:

$$\forall\, m,n \in \mathbb{Z},\,\theta(m+n)\lesssim_s \theta(m)+\theta(n).$$
From this \emph{``Fractional Leibniz Rule''}, we deduce that for $n=n_1-n_2+n_3-n_4+n_5$, one has:

$$\theta(n)\lesssim_s max \{\theta(n_1),\theta(n_2),\theta(n_3),\theta(n_4),\theta(n_5)\}.$$
So, arguing analogously as earlier (c.f. (\ref{eq:boundonL})), we obtain:

$$\|\mathcal{D}v\|_{X^{0,b}}\leq c_1 \delta^{\frac{1-2b}{2}} \|\mathcal{D}\Phi\|_{L^2}+
c_2 \delta^{\frac{1-2b}{2}}\|\mathcal{D}(|v_{\delta}|^4v_{\delta})\|_{X^{0,b-1}}\leq$$
$$\leq  c_1 \delta^{\frac{1-2b}{2}} \|\mathcal{D}\Phi\|_{L^2}+
c_3 \delta^{\theta_0+\frac{5(1-2b)}{2}}\|v\|_{X^{1,b}}^4 \|\mathcal{D}v\|_{X^{0,b}}.$$
By using (\ref{eq:X1bbound}), we get:

$$\|\mathcal{D}v\|_{X^{0,b}}\leq  c_1 \delta^{\frac{1-2b}{2}} \|\mathcal{D}\Phi\|_{L^2}+
c_4 \delta^{\theta_0+\frac{9(1-2b)}{2}}\|\Phi\|_{H^1}^4 \|\mathcal{D}v\|_{X^{0,b}}.$$

By using $(\ref{eq:epsiloncondition})$, we can choose $\delta>0$ (possibly smaller than the one chosen before), such that:

\begin{equation}
\label{eq:conditionondelta1}
c_4 \delta^{\theta_0 + \frac{9}{2}(1-2b)} \|\Phi\|_{H^1}^4 \leq \frac{1}{2}.
\end{equation}
Observe that then $\delta=\delta(s,Energy,Mass).$ Also, we note that choosing $\delta$ to be even smaller than
the one chosen in the proof of (\ref{eq:properties of v1}),(\ref{eq:properties of v2}), yet still depending only on
$(s,Energy,Mass)$ doesn't create problems with the estimates on $\|v\|_{X^{1,b}},\|v\|_{X^{s,b}}$ we had earlier.

Note that:

$$\|\mathcal{D}v\|_{X^{0,b}}\leq \|v\|_{X^{s,b}}<\infty.$$
where in the last inequality, we were using (\ref{eq:Xsbbound}).

Hence:

$$\|\mathcal{D}v\|_{X^{0,b}}\leq c_1\delta^{\frac{1-2b}{2}}\|\mathcal{D}\Phi\|_{L^2}+\frac{1}{2}\|\mathcal{D}v\|_{X^{0,b}}.$$
implies:

$$\|\mathcal{D}v\|_{X^{0,b}}\leq 2c_1 \|\mathcal{D}\Phi\|_{L^2}.$$
In other words, we obtain:

$$\|\mathcal{D}v\|_{X^{0,\frac{1}{2}+}}\lesssim \|\mathcal{D}\Phi\|_{L^2}.$$
with the explicit constant depending only on $(s,Energy,Mass)$.

\vspace{2mm}

We may now conclude that (\ref{eq:properties of v3}) holds.

\vspace{2mm}

It remains to see the continuity of $\delta,C$ in the energy and mass. We recall from the construction of $\delta$ (c.f. (\ref{eq:conditionondelta}),(\ref{eq:conditionondelta1})) that we want, for some $\gamma>0$:

$$\delta \lesssim \|\Phi\|_{H^1}^{-\gamma}.$$
Since $\|\Phi\|_{H^1}^2 \lesssim M(\Phi) + E(\Phi)$, we take:

\begin{equation}
\label{eq:deltachoice}
\delta \sim (M(\Phi)+E(\Phi))^{-\frac{\gamma}{2}}
\end{equation}
Such a $\delta$ depends continuously on the energy and mass. We notice that the $C$ is obtained as a continuous function of $\delta$, and the bounds on the $H^1$ norm of a solution, so it also depends continuously on energy and mass.

This proves Proposition \ref{Proposition 3.1} in the case $k=2$.

\vspace{3mm}
If we are considering the general case $k \geq 2$, we have to modify the previous proof to consider the map:

$$Lv:=\chi_{\delta}(t)S(t)\Phi-i\chi_{\delta}(t)\int_{0}^{t}S(t-t')|v|^{2k}v(t')dt'.$$
Arguing as in  (\ref{eq:boundonL}), we deduce:

$$\|Lv\|_{X^{s,b}}\leq c
\delta^{\frac{1-2b}{2}}\|\Phi\|_{H^s}+c
\delta^{\frac{1-2b}{2}}\||v_{\delta}|^{2k} v_{\delta} \|_{X^{s,b-1}}.$$

\vspace{2mm}

One then estimates the quantity $\||v_{\delta}|^{2k} v_{\delta} \|_{X^{s,b-1}}$ by duality.

\vspace{3mm}

The extra $k-2$ terms that are obtained here are estimated in $\|\cdot\|_{L^{\infty}_{t,x}}$ after an application
of H\"{o}lder's inequality and we again use the fact that:
$X^{\frac{1}{2}+,\frac{1}{2}+}\hookrightarrow L^{\infty}_{t,x}$. The proof then follows similarly as in the case $k=2$. We omit the details.

\end{proof}

We now present the proof of Proposition \ref{Proposition 3.2}, by which we can iterate our
construction without changing the size of the increment:

\vspace{3mm}

\begin{proof}(\emph{of Proposition \ref{Proposition 3.2}})

The proof of this remarkable fact uses the special structure of the
$X^{s,b}$ spaces. The main ingredient is the following fact, taken
from \cite{Ca}:

\newtheorem*{Theorem 1.2.5}{Theorem 1.2.5}
\begin{Theorem 1.2.5}``Consider two Banach spaces $X \hookrightarrow
Y$ and $1<p,q\leq \infty$ and an open interval $I \subseteq
\mathbb{R}$ (which can equal $\mathbb{R})$. Let $(f_n)_{n \geq 0}$ be
a bounded sequence in $L^q(I,Y)$, and let $f:I \rightarrow Y$ be
such that: $f_n(t) \rightharpoonup f(t)$ in $Y$ as $n \rightarrow
\infty$ for a.e. $t \in I$. If $(f_n)_{n \geq 0}$ is bounded in
$L^p(I,X)$ and if $X$ is reflexive, then $f \in L^p(I,X)$ and
$\|f\|_{L^p(I,X)}\leq \lim \inf \|f_n\|_{L^p(I,X)}.''$
\end{Theorem 1.2.5}

We now work on the Fourier transform side. For $\sigma \geq 0$, we define:

$$h_n^{\sigma}:=\{(b_n)_{n \in \mathbb{Z}}:(\sum_n
(1+|n|)^{2\sigma}|b_n|^2)^{\frac{1}{2}}< \infty\}$$

$$\|b\|_{h^n_{\sigma}}:=(\sum_n
(1+|n|)^{2\sigma}|b_n|^2)^{\frac{1}{2}}.$$
In this way, we get a Hilbert space, which is in particular a
reflexive Banach space.
The set $ B:=\{v:\|v\|_{X^{s,b}}\leq R\}$ is identified with the set:

$$E:=\{\tilde{v}:\mathbb{Z}\times \mathbb{R} \rightarrow \mathbb{C}:\,
\sum_n \int d \tau
(1+|\tau+n^2|)^{2b}(1+|n|)^{2s}|\tilde{v}(n,\tau)|^2\leq R^2\}.,$$
with the metric given by:

$$d(\tilde{v},\tilde{w}):=(\sum_n \int d \tau
(1+|\tau+n^2|)^{2b}(1+|n|)^{2s}|\tilde{v}(n,\tau)-\tilde{w}(n,\tau)|^2)^{\frac{1}{2}}.$$
We will now apply Theorem 1.2.5 from \cite{Ca} with:

$$X=h_n^s,Y=h_n^1,p=q=2,I=\mathbb{R}.$$

\vspace{3mm}

Let us now start with $(u_r)_{r \geq 0}$ a sequence in $B$ such
that: $u_r \rightarrow u$ as $r \rightarrow \infty$ in $X^{1,b}$ and
we want to argue that $u \in B.$

\vspace{2mm}

Let us take:

$$f_r(n,\tau):=(1+|\tau+n^2|)^b \widetilde{u_r}(n,\tau).$$
Then:

$$\|f_r\|_{L^2_{\tau}h_n^s} \leq R.$$
The claim we want to prove is:

$$\|f\|_{L^2_{\tau}h_n^s} \leq R\,\,\mbox{where}\,\,f(n,\tau):=(1+|\tau+n^2|)^b \widetilde{v}(n,\tau).$$
\vspace{2mm}
We know that:

$$\|u_r-u\|_{X^{1,b}} \rightarrow 0.$$
Thus:

$$\|\|f_r(\tau)-f(\tau)\|_{h_n^1}\|_{L^2_{\tau}} \rightarrow 0.$$
Hence:

$$\|f_r(\tau)-f(\tau)\|_{h_n^1} \rightarrow 0\,\,\mbox{in measure as
a function of}\,\,\tau.$$
Thus, we can pass to a subsequence of $(f_r)_{r \geq 0}$ which we
again call $(f_r)$ such that:

$$\|f_r(\tau)-f(\tau)\|_{h_n^1} \rightarrow 0\,\,\mbox{pointwise almost everywhere as a function of}\,\,\tau.$$
In particular:

$$f_r(\tau) \rightarrow f(\tau) \,\,\mbox{in}\,\,
h_n^1=Y,\,\,\mbox{for almost every}\,\,\tau.$$
So:

$$f_r(\tau) \rightharpoonup f(\tau) \,\,\mbox{in}\,\,
h_n^1=Y,\,\,\mbox{for almost every}\,\,\tau.$$
Now, Theorem 1.2.5 from \cite{Ca} implies that:

\begin{equation}
\label{eq:closedballbound}
\|f\|_{L^2_{\tau}h_n^s} \leq \lim \inf
\|f_r\|_{L^2_{\tau}h_n^s} \leq R.
\end{equation}

\end{proof}

We now prove the Approximation Lemma.

\begin{proof}(\emph{of Proposition \ref{Proposition 3.3}})

\vspace{2mm}

With notation as in the statement of the Proposition, we consider $n$ sufficiently large so that:
$$M(\Phi_n) \sim M(\Phi),E(\Phi_n) \sim E(\Phi), \|\Phi_n\|_{H^s} \sim \|\Phi\|_{H^s}.$$
Let us denote $N(f):=|f|^{2k}f.$

\vspace{2mm}

With notation as in the proof of Proposition \ref{Proposition 3.1}, we define:

$$Lv:= \chi_{\delta}(t)S(t)\Phi-i\chi_{\delta}(t)\int_0^t S(t-t')N(v)(t')dt'.$$

$$L_n v^{(n)}:= \chi_{\delta}(t)S(t)\Phi_n-i\chi_{\delta}(t)\int_0^t S(t-t')N(v^{(n)})(t')dt'.$$
From our earlier arguments, we can choose $\delta=\delta(s,E(\Phi),M(\Phi))>0$ sufficiently small so that $L$ has a fixed point $v$ that coincides with $u$ for $t \in [0,\delta]$, and which satisfies:

\begin{equation}
\label{eq:lwp}
\|v\|_{X^{s,b'}}\leq C(s,E(\Phi),M(\Phi))\delta^{\frac{1-2b'}{2}}\|\Phi\|_{H^s}, b'=\frac{1}{2}+
\end{equation}

\vspace{2mm}

Let us fix $T>\delta$. By just iterating the local well-posedness bound, we get that for all $t\in [0,T]$:

\begin{equation}
\label{eq:C2bound}
\|u(t)\|_{H^s} \leq C(s,E(\Phi),M(\Phi))\|\Phi\|_{H^s}e^{C_1(s,E(\Phi),M(\Phi))T}=:C_2.
\end{equation}

Hence $C_2=C_2(s,E(\Phi),M(\Phi),\|\Phi\|_{H^s},T)>0.$

\vspace{2mm}

We can repeat the same for $L_n$ to obtain a fixed point $v^{(n)}$ which coincides with $u^{(n)}$ for $t\in [t_0,t_0+\delta]$.
The $\delta$ and $C_2$ will remain equivalent to the ones chosen earlier.

Then:

$$\|v-v^{(n)}\|_{X^{s,b'}}=\|Lv-L_n v^{(n)}\|_{X^{s,b'}} \leq$$

$$\|\chi_{\delta}(t)S(t)\Phi - \chi_{\delta}(t)S(t)\Phi_n\|_{X^{s,b'}}+
\|\chi_{\delta}(t)\int_0^t S(t-t')(N(v)-N(v^{(n)}))(t')dt'\|_{X^{s,b'}}\leq$$

$$c \delta^{\frac{1-2b'}{2}} \|\Phi-\Phi_n\|_{H^s} + c\delta^{r_0}(P(\|v\|_{X^{s,b'}})+P(\|v^{(n)}\|_{X^{s,b'}}))\|v-v^{(n)}\|_{X^{s,b'}}.$$
Here, $c>0$ is a universal constant, $P$ is a polynomial of fixed degree such that $P(0)=0$, and $r_0>0$ is fixed (independent of $b'$).

Hence, by (\ref{eq:lwp}), it follows that:

$$\|v-v^{(n)}\|_{X^{s,b'}} \leq c\delta^{\frac{1-2b'}{2}}\|\Phi-\Phi_n\|_{H^s}+
c \delta^{r_0}(P(\delta^{\frac{1-2b'}{2}}\|\Phi\|_{H^s})+P(\delta^{\frac{1-2b'}{2}}\|\Phi_n\|_{H^s}))\|v-v^{(n)}\|_{X^{s,b'}} $$

\begin{equation}
\label{eq:v-vn}
\leq c\delta^{\frac{1-2b'}{2}}\|\Phi-\Phi_n\|_{H^s}+ \tilde{c} \delta^{r_0}P(\delta^{\frac{1-2b'}{2}}C_2)\|v-v^{(n)}\|_{X^{s,b'}}.
\end{equation}
The last inequality was obtained by combining (\ref{eq:lwp}) and (\ref{eq:C2bound}).

\vspace{2mm}

We now choose $\delta$ even smaller such that:

$$\tilde{c} \delta^{r_0}P(\delta^{\frac{1-2b'}{2}}C_2)\leq \frac{1}{2}.$$
By choosing $\delta$ even smaller, the previous estimate (\ref{eq:lwp}), and all the subsequent estimates will remain otherwise unchanged. The new $\delta=\delta(s,E(\Phi),M(\Phi),C_2)>0$ now also depends on $C_2$.

\vspace{2mm}

We obtain:

\vspace{2mm}

$$\|v-v^{(n)}\|_{X^{s,b'}}\leq 2c \delta^{\frac{1-2b'}{2}}\|\Phi-\Phi_n\|_{H^s}.$$
By using $(\ref{eq:C2bound})$, it follows that, this bound can be iterated on $\sim \frac{T}{\delta}$ time intervals, with the same $\delta$. Namely, in the definition of $L,L_n$, we just have to consider $\chi_{\delta}(\cdot-r)$ for an appropriate time translation $r$, and instead of $\Phi,\Phi_n$ as initial data, we consider $u(r),u^{(n)}(r)$ respectively.

Furthermore, let us use the fact that: $X^{s,b'} \hookrightarrow L^{\infty}_t H^s_x$ to deduce that, for the large enough $n$ we are considering, one has:

\vspace{2mm}

$$\|u(t)-u^{(n)}(t)\|_{H^s}\leq C\|\Phi-\Phi_n\|_{H^s}.$$
\vspace{2mm}
Here, $C=C(s,E(\Phi),M(\Phi),\|\Phi\|_{H^s},T)>0.$

\vspace{2mm}

The claim now follows.

\end{proof}

\begin{remark}
The proof of Proposition \ref{Proposition 3.3} is also valid for $(\ref{eq:Hartree})$, $(\ref{eq:potentialcubicnls})$, and $(\ref{eq:inhomogeneouscubicnls})$.
\end{remark}

\end{document}